\documentclass{article}
\usepackage[utf8x]{inputenc}
\usepackage[english]{babel}
\usepackage[T1]{fontenc}
\usepackage{lmodern}
\usepackage{fullpage}
\usepackage{graphicx}
\usepackage{epstopdf}
\usepackage{caption}
\usepackage{subcaption}
\usepackage{multirow}
\usepackage{eurosym}
\usepackage{scrextend}
\usepackage{color}
\usepackage{listings}
\usepackage{textcomp}
\usepackage{hyperref}
\usepackage{algorithm}
\usepackage{algorithmicx}
\usepackage[noend]{algpseudocode}

\usepackage{amsmath}
\usepackage{amssymb}
\usepackage{amsthm}
\usepackage{tikz}
\usetikzlibrary{decorations}
\usetikzlibrary{decorations.pathreplacing}

\usepackage[squaren, Gray]{SIunits}
\usepackage{sistyle}

\usepackage{lipsum}
\usepackage[round]{natbib}

\usepackage{booktabs}

\newtheorem{myprop}{Proposition}

\usepackage[a4paper,
            bindingoffset=0.2in,
            left=1.3in,
            right=1.3in,
            top=1.3in,
            bottom=1.3in,
            footskip=.25in]{geometry}

\title{Does Financial Trading Smooth Non-Convex Markets?}

\author{Nicolas Stevens\thanks{Corresponding author.}~\thanks{Max Planck Institute for Research on Collective Goods (Bonn, Germany) \& Center for Operations Research and Econometrics, UCLouvain (Louvain-la-Neuve, Belgium). Email: \texttt{stevens@coll.mpg.de}}~,
Peter Cramton\thanks{Max Planck Institute for Research on Collective Goods (Bonn, Germany) \& University of Maryland, College Park, USA. Email: \texttt{pcramton@gmail.com}}~
and Martial Toniotti\thanks{Center for Operations Research and Econometrics, UCLouvain, Louvain-la-Neuve, Belgium. Email: \texttt{martial.toniotti@uclouvain.be}}}

\begin{document}
\maketitle

\begin{abstract}
In non-convex markets, a competitive equilibrium may fail to exist. This turns out to be an important issue in real-world non-convex auction markets, such as electricity markets, as it complicates pricing and requires the auctioneer to resort to out-of-market discriminatory side payments to sustain an equilibrium.
We investigate whether the introduction of \textit{convex} financial trading induces a \textit{smoothing effect}, mitigating the issues arising from non-convexities.
We develop a two-stage non-convex market model (a forward market followed by a spot market) in which convex financial traders participate in the forward market. Our model predicts that financial trading reduces the magnitude of side payments required to support the cleared allocation.
To test the prediction of our model, we examine the introduction of a transaction fee on financial traders in 2020 by PJM, the US's largest electricity market. We show that the substantial decline in financial trading volume caused by this policy coincided with a significant increase in side payments, in line with our theoretical predictions. 
\end{abstract}

\textbf{Keywords}: Nonconvex Markets, Financial Trading, Equilibrium Existence, Electricity Auctions

\newpage
\section{Introduction}
In a non-convex market, a competitive equilibrium may fail to exist. 
Several classical results show that this failure might not be too severe, at least \textit{on the large scale}: \cite{starr1969} shows that when an equilibrium does not exist, the distance to an equilibrium (as measured by total excess supply or demand) is invariant with the number of market participants, while \cite{aumann1964} shows that when there is a \textit{continuum} of traders
, an equilibrium exists in spite of the non-convexities.
Yet, despite these ``asymptotic'' positive results, non-convexity remains problematic in real-world auctions where the auctioneer cannot simply overlook these complexities but must determine market-clearing prices \citep{milgrom2017}.
There is an extensive literature which approaches this problem by designing mechanisms that accommodate non-convexities (see, for instance, \cite{milgrom2025}, and the literature covered hereafter). 
Instead of designing new mechanisms, we investigate whether the introduction of financial traders induces a \textit{smoothing effect} which helps solve the issues arising from non-convexities. 
Two important features of financial traders are that (i) they use \textit{convex} bids and (ii) they are \textit{arbitragers}, thus they are expected to bid close to the market margin. 
We analyze how this addition of a large volume of \textit{convex} bids \textit{close to the margin} smooths the non-convexities present in the market and helps the auctioneer with price formation.
We study this question in the context of electricity markets, which combine non-convex auction structures with active financial trading.

Electricity wholesale markets are typically organized as sealed-bid uniform-price auctions \citep{wilson2002,cramton2017electricity}. Most of these auctions involve non-convexities in the bidding language that enable the market participants, in particular the electricity producers, to represent the costs and constraints of operating a power plant directly into the auction.
Focusing on the United States\footnote{Although our work is focused on the US markets, non-convexities are also present in other electricity markets. For instance, non-convexities arise from the ``block orders'' (with features such as all-or-nothing dispatch, minimum acceptance ratio, exclusive groups, etc.) in European or Indian markets.}, each power plant bids separately in the market, using multi-parts bids that represent their technical characteristics, and where non-convexities arise from features such as start-up costs, minimum up and down times requirements, minimum production limits, and so on. 
The Walrasian auctioneer collects the bids of all the market participants and solves the so-called unit commitment model \citep{knueven2020} in order to find the allocation that maximizes the market surplus under all technical constraints (grid constraints and production constraints represented by the bids of the market participants). The auctioneer  then computes the market-clearing prices.

The presence of non-convexities has several consequences, such as on the algorithmic complexity of the auction \citep{knueven2020}, or on the bidding behaviour and the nature of competition in the market \citep{reguant2014,jha2025}. Yet, the main implication is that an equilibrium is not guaranteed to exist, although it might exist in some cases \citep{bikhchandani1997}. 
In concrete terms, given the surplus-maximizing allocation, the auctioneer may not be able to find a uniform price that ``clears the market''.
This is illustrated in the following example.
Figure \ref{fig:exampleSidePayments} presents an elementary electricity market with two demand bids, $D_1$ and $D_2$, as well as two supply offers $S_1$ and $S_2$. Assuming first that all the bids are convex, a Walrasian auctioneer maximizing the total surplus would simply intersect supply and demand: he would clear $D_1$, clear 90MWh of $S_1$ and reject the other bids. In this configuration, 
the price $\pi = 30$\$/MWh clears the market.
If now $S_1$ is a non-convex ``all-or-nothing'' bid (either it is entirely cleared or entirely rejected), then the surplus-maximizing allocation clears $S_1$, $D_1$, as well as 10MWh of $D_2$. 
The key issue arising from this non-convexity is that there is no uniform price that supports the cleared allocation. 
Indeed, if the auctioneer chooses $\pi = 20$\$/MWh, then $S_1$ loses money. More generally, this occurs whenever $\pi < 30$\$/MWh. 
If instead he chooses $\pi = 30$\$/MWh, the demand bid $D_2$ loses money since his willingness-to-pay for electricity is below the market price. This occurs whenever $\pi > 20$\$/MWh. Thus there is no competitive equilibrium (no price satisfies both \textit{market-clearing} and \textit{envy-freeness}).

\begin{figure}
\centering
\includegraphics[width=0.5\textwidth]{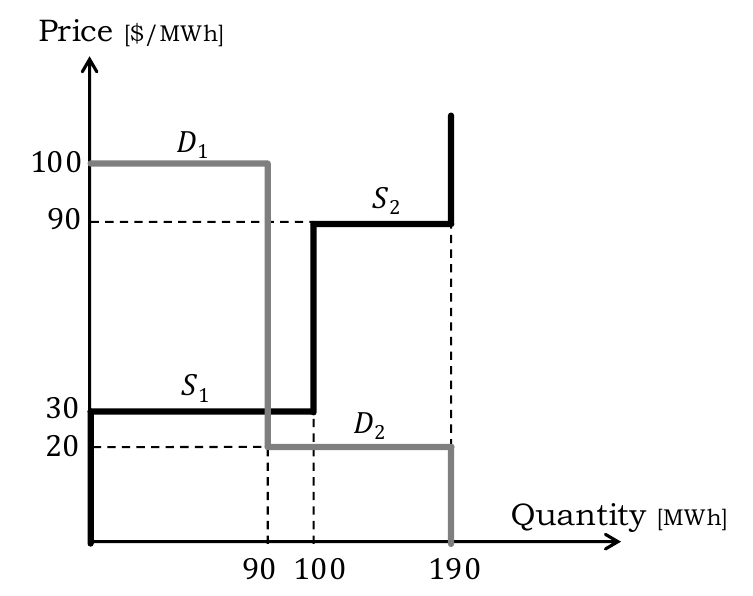}
\caption{An hourly two-sided electricity market with four bids.}
\label{fig:exampleSidePayments}
\end{figure}

Practically, the electricity auctioneers address the issue by complementing the uniform price with discriminatory \textit{side payments} (or ``markups'', as called by \cite{milgrom2025}) paid to the market participants to support the allocation. In our example, this could mean setting the price to $\pi = 20$\$/MWh while providing a side payment of $(30 - 20) \times 100 = 1000$\$ to $S_1$ in order for $S_1$ to break even. 
It can be shown more formally, and for a general auction model, that a competitive equilibrium can indeed be recovered by extending the uniform price vector with side payments \citep{oneill2005}.
These side payments are generally regarded as undesirable for they are discriminatory, non transparent (they do not have temporal nor locational granularity), they can induce gaming behaviour \citep{byers2023auction} and they also deter investment signal \citep{byers2023}.
The key implication is that \textit{some relevant cost information is not reflected in the uniform price signal but is instead handled by ``out-of-market'' side payments}.

There is an extensive literature that aims at addressing these issues by designing pricing mechanisms for non-convex auctions \citep{milgrom2025}, and in particular for electricity auctions \citep{oneill2005,gribikHogan2007,chao2019,stevens2024,ahunbay2025,stevens2026}. 
These market design proposals depart from the classical marginal pricing rule by incorporating some non-convex costs into the price signal, thereby reducing the side payments required to sustain equilibrium. For example, one prominent proposal is to select the uniform price that \textit{minimizes} the required side payments \citep{gribikHogan2007}.

In this paper, we approach the problem from a different perspective by asking whether the participation of financial traders can itself help mitigate these issues.
Electricity auctions are not pure combinatorial auctions, as they include a significant share of \textit{convex} bids, among which are the financial trading bids, often referred to as ``virtual bids''.
Electricity markets are organized as sequential auctions, the most important being the day-ahead \textit{forward} market and the real-time \textit{spot} market.
The day-ahead auction is a financial market that involves both ``physical'' (non-convex) bidders, tied to physical assets like power plants, and ``virtual'' (convex) bidders that engage in arbitrage between day-ahead and real-time prices.
The benefits of financial trading in electricity markets have been extensively studied in the literature \citep{hogan2016virtual}. These benefits include mitigating market power \citep{ito2016,mercadal2022}, enhancing price convergence and reducing costs \citep{jha2023}, as well as providing risk hedging opportunities \citep{longstaff2004}.
Hogan further conjectured the additional benefit that ``including virtual transactions [...] would have the effect of smoothing the day-ahead commitment and dispatch problem and reducing the required uplift [side] payments.'' \citep{hogan2016virtual}
In this paper, we investigate this claim formally and empirically: by making the non-convex market more convex, financial trading exerts a \textit{smoothing effect} that improves price formation and reduces the side payments required to sustain an equilibrium.
In particular, our paper makes two main contributions. 

First, we develop a two-stage stochastic market model (a forward market followed by a spot market) in which financial traders participate in the forward market. Our model predicts that financial trading improves price convergence, reduces total cost, and---most important for our analysis---mitigates the magnitude of side payments required from the auctioneer to sustain an equilibrium. 
More specifically, we show that virtual trading induces two distinct effects.
(i) First, by behaving as arbitragers in the day-ahead market, virtual traders internalize both expected real-time conditions and the non-convex operating costs embedded in day-ahead commitment decisions.
In our model, virtual traders set the price and eliminate the side payments in day-ahead.
(ii) Second, by anticipating real-time conditions, financial trading implies better commitment decisions in day-ahead, leading to less lumpy activations of ``fast-start resources'' in real-time. This helps reduce the side payments, as the activation of fast-start resources is typically associated with side payments needed to cover their startup costs. 

This \textit{smoothing effect} is distinct from the classic \textit{market size effect} as analyzed by \cite{starr1969} in the context of general equilibrium theory and recently reinterpreted by \cite{milgrom2025} in the context of auction theory (see also \cite{chao2019}, \cite{stevens2024} and \cite{hubner2025} for transposition of these results to the context of electricity auctions).
The market size effect shows that, in a non-convex economy where equilibrium may fail to exist, the distance to equilibrium---as measured by either total excess demand or side payments---is bounded by a constant independent of the number of market participants. Hence, as the market becomes larger (increasing the number of participants), the absolute distance to equilibrium remains unchanged while its relative importance with respect to market size decreases.\footnote{This important result is a direct application of Shapley-Folkman lemma. See also \citep{bertsekas1982,bertsekas1983} for the perspective of linear programming and duality gaps.}
The smoothing effect we analyze is not merely related to the market size, but to the specific type of \textit{convex arbitrage bids} introduced by the financial traders.

Second, we test the prediction of our model by analyzing the introduction in 2020 of a transaction fee on financial traders in PJM, the largest electricity market in the United States. This transaction fee led to a substantial decline in financial trading volume. We provide evidence that this reduction was followed by a significant increase of side payments needed to sustain equilibrium.
We find that the real-time side payments increased by 80\% following the introduction of the transaction fee, while the likelihood of non-zero day-ahead side payments increased by 10\%.

Our empirical analysis contributes to the literature on financial trading, with findings in line with prior empirical evidence.
In particular, \cite{long2020} simulate the PJM auction model with and without financial bids and show that the inclusion of financial bids mitigates the side payments. The main difference with our analysis is that their work relies on simulations that assume the removal of financial bids does not affect the bidding behavior more broadly. While our empirical analysis relies on a policy change in the market. 
Besides these simulation exercises, the two closest empirical works are \cite{hubner2025} and \cite{jha2023}.
\cite{hubner2025} 
provides empirical evidence, in the context of the European electricity auction, that when the market is large, with a high share of convex bids, the probability of reaching an equilibrium becomes higher.
The main difference with our work, besides the fact that we focus on PJM market instead of the European market\footnote{There are two main benefits of focusing on PJM market over the European market, for our analysis: (i) non-convexities are more prevalent in PJM market (as PJM market is nodal, the market tends to be more fragmented, which exacerbates the impact of non-convexities ; and the European market bidding language, although it includes non-convex bids, tends to be somewhat less non-convex than the unit commitment model of PJM) and (ii) financial traders' bids can be distinguished from power plants' bids.}, is that the analysis of \cite{hubner2025} is focused on the \textit{market-size effect}---considering the effect of convex bids in general---while our work focuses on the \textit{smoothing effect} induced by a specific type of convex bids, namely the financial trading bids.
Finally, \cite{jha2023} provide empirical evidence of price convergence and cost-saving entailed by the participation of financial traders in CAISO market. While they consider a non-convex market, they do not specifically look at the implications for side payments and price formation.

The rest of the paper is organized as follows. Section \ref{sec:PJMmarket} provides an overview of PJM Interconnection electricity market. In particular, it discusses the sequential nature of the market (forward and spot markets) and the role of financial traders, and it provides descriptive statistics of the share of convex and non-convex bids in the market as well as the magnitude of the side payments.
It also helps motivating some modeling choices we later make.
Section \ref{sec:model} goes on with our model of a non-convex sequential electricity market with financial trading participation. This section formalizes the intuition of the smoothing effect described above.
Sections \ref{sec:empiricalStrat} and \ref{sec:mainResults} cover the empirical analysis with the data from PJM.
Section \ref{sec:ccl} puts our main findings in perspective of policy discussions and concludes.

\section{PJM Interconnection market} \label{sec:PJMmarket}
PJM Interconnection operates the largest wholesale electricity market in the United States, serving 67 million people across 13 states, and monitoring more than 1,400 power plants accounting for roughly 180GW of generation capacity.\footnote{The installed capacities in 2024 were roughly 90GW of gas, 35GW of coal, 34GW of nuclear, 5GW of oil, 6GW of hydropower, >10GW of wind and >10GW of solar.}
In 2024, the hourly average demand including exports was 95 GWh (with a peak load plus exports of 154 GWh), served by production coming mainly from gas (44\%), coal (15\%) and nuclear (32\%) \citep{MonitoringAnalytics2025}.

As it is common in the US, the wholesale electricity market works as a sequence of sealed-bids uniform price auctions organised by PJM, which follow the principles of bid-based, security-constrained, economic dispatch with locational marginal prices \citep{hogan2021strengths}. Two cornerstones of the wholesale market are the real-time \textit{spot} market and the day-ahead \textit{forward} market.
The real-time market is the \textit{physical} market when a physical commodity is sold ``on the spot'' and where the trades correspond to physical exchanges of energy.
The day-ahead market is a \textit{financial} market: there is no physical delivery but only trades of financial contracts, which are derivatives of the real-time price of electricity. Yet, it is a financial market which has significant physical implications as its outcome is used for coordination and commitment of power plants for the next day \citep{stoft2002}.
Generation companies and consumers' electricity providers participate in these auctions by submitting multi-parts bids representing their willingness to sell or to buy electricity.
PJM market also includes a detailed model of the power grid representing the constraints of trade between the locations (thousands of nodes are represented in PJM auction model). 
The real-time market clears every five minutes while the day-ahead market clears once per day (it is a multi-periods market that clears at once the 24 hourly periods of the next day). These markets also co-optimize both energy and ancillary services products.\footnote{See PJM manual for a detailed timeline and specific rules of these market \citep{PJM2025b}.}

\paragraph*{Bids and non-convexities.}
The day-ahead electricity auction in PJM mainly includes two types of bids:  the ``physical'' bids and the ``virtual'' bids.
Physical bids are tied to physical assets. They further split into two types: dispatchable generation bids and self-schedule bids. The dispatchable generation  are \textit{non-convex} (or partly non-convex) bids that enable the market participants to express the true economics of their power plants: they are multi-part ``unit commitment'' bids which include explicit information about costs and constraints of production. The main sources of non-convexities relate to production constraints such as minimum production limits or minimum up and down times requirements of power plants as well as to fixed costs such as start-up costs and no-load costs (let us emphasize that whenever we talk about ``fixed costs'' in this paper, we mean the operational avoidable fixed costs, such as start-up costs, as opposed to the capital costs which are sunk and have no implication for our discussion).
The self-schedule bids are convex and correspond to ``offer to supply a fixed block of MW, as a price taker'' \citep{MonitoringAnalytics2018}, which in PJM means a continuous quantity at zero price.
Power plants have the obligation to participate into the day-ahead market of PJM, but they are allowed to either participate as dispatchable generation or to self-schedule \citep{PJM2025b}.\footnote{The main restriction to self-scheduling is that these bids are not compensated for their losses in the market: to incentivize units to be flexible, self-schedule units are not eligible to receive side payments.}
Figure \ref{fig:PJMbids_meritOrder} shows the merit order curve of physical bids for a sample hour (12/06/2019 at 9am): the day-ahead self-schedule bids are the flat zero-price segment while the dispatchable generation bids are the remaining part of the curve. 
This aggregated merit order curve is of course a simplification, as it ignores the network constraints (although the selected period corresponds to an hour with little congestion), the fixed costs and other constraints of production. Yet, we see on Figure \ref{fig:PJMbids_meritOrder} that supply and demand (the inelastic load) cross closely to the actual PJM system price at that time.
The figure also shows the \textit{real-time} self-scheduling, which corresponds to the amount of inflexible supply in real-time. As we observe, this coincides to most of the supply committed in day-ahead.

\begin{figure}
    \centering
    \begin{subfigure}[b]{0.8\textwidth}
        \centering
        \includegraphics[width=\textwidth]{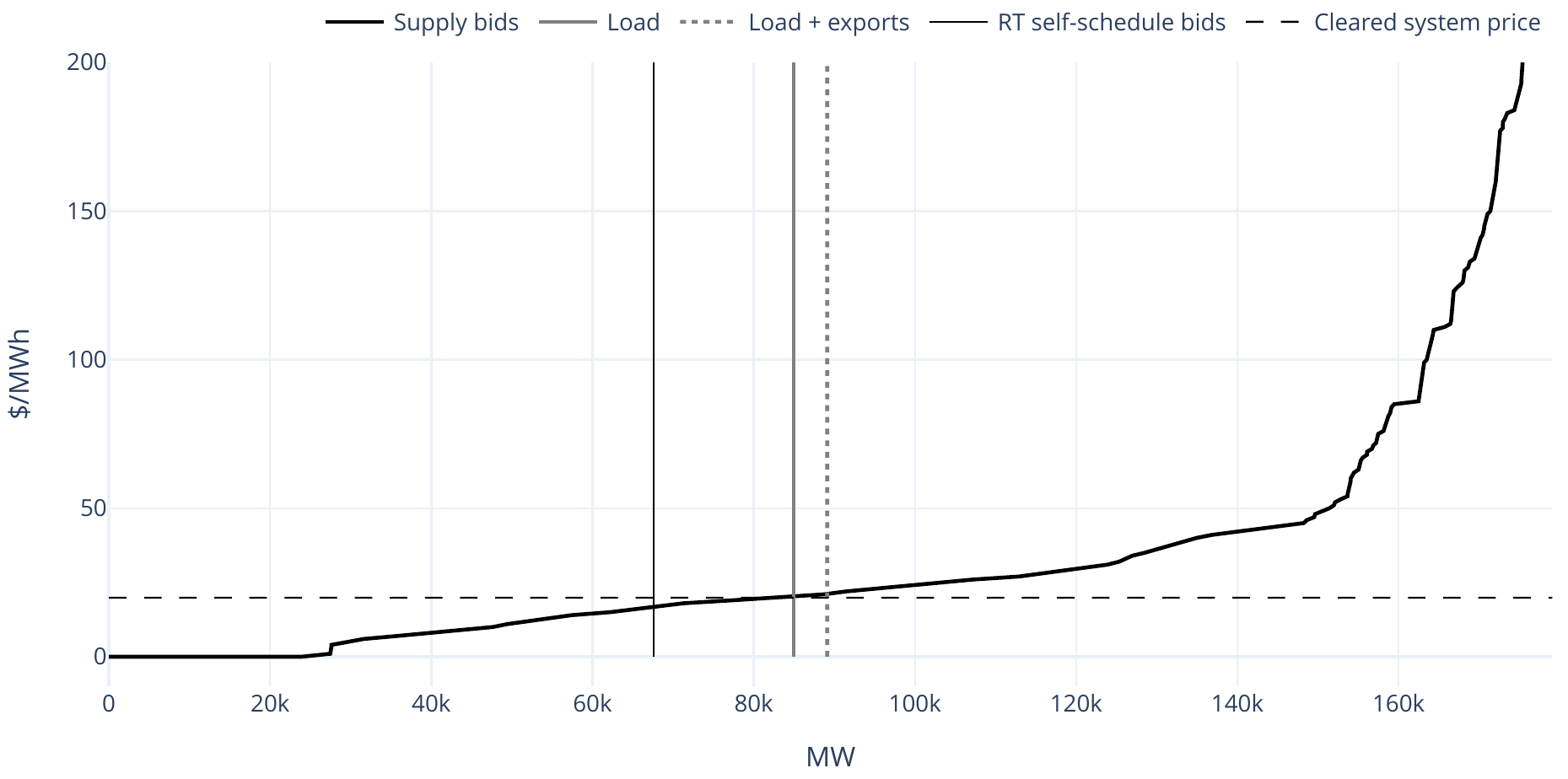}
        \caption{PJM merit order curve (physical bids)}
        \label{fig:PJMbids_meritOrder}
    \end{subfigure}
    \begin{subfigure}[b]{0.8\textwidth}
        \centering
        \includegraphics[width=\textwidth]{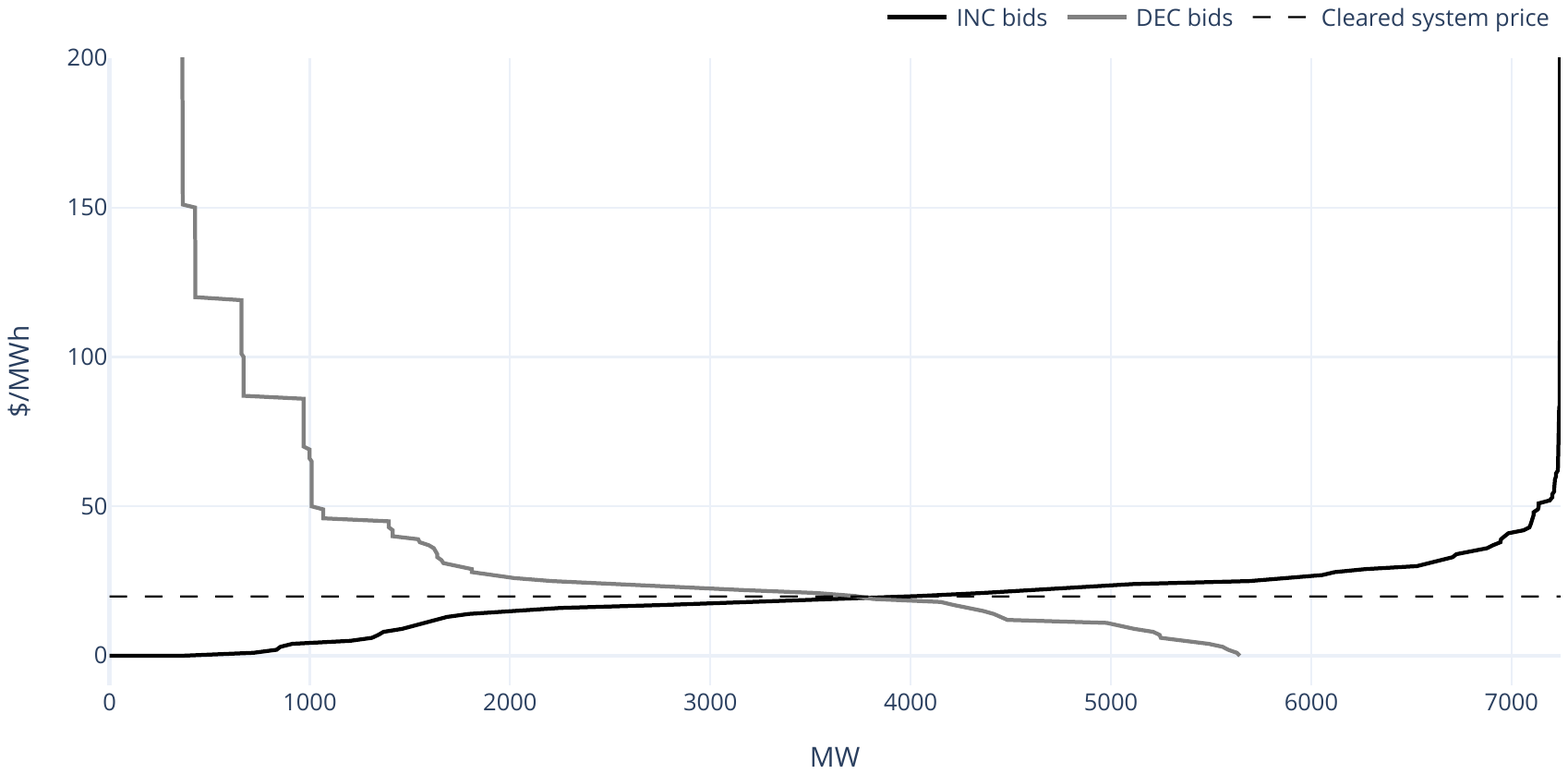}
        \caption{PJM virtual bids (INC and DEC)}
        \label{fig:PJMbids_VB}
    \end{subfigure}
    \caption{PJM bidding curves by category on June 12th, 2019, at 9am.}
    \label{fig:PJMbids_curves}
\end{figure}

Besides these physical bids, PJM day-ahead market also includes virtual bids which are pure financial trading: market participants playing as arbitrager between the day-ahead financial market and the real-time spot market.
Concretely, a virtual trader who sells a quantity $q$ at the day-ahead price $p_{DA}$ have to buy it back at the real-time price $p_{RT}$, thus he makes a profit $q \times (p_{DA} - p_{RT})$.
There are three types of virtual bids in PJM: INC, DEC and UTC orders, which are all \textit{convex} bids. An INC order is a sell, or ``increment'', order (an injection) in one location of the grid. A DEC is a buy, or ``decrement'', order (a withdrawal) in one location of the grid. A UTC is Up To Congestion Transactions order which is a combination of an INC and a DEC order in two different locations: an arbitrage between two locations of the grid, similar to a FTR contract \citep{hogan2016virtual}.
Figure \ref{fig:PJMbids_VB} also shows the INC and DEC bid curves on the same sample hour as Figure \ref{fig:PJMbids_meritOrder}. We observe these curves intersect closely around the market price and are fairly flat around the market price, thus \textit{these virtual bids add a large volume of convex bids close to the margin}.
That is: unlike self-schedule bids, which are convex \textit{but at zero price}---thus not at the margin---, virtual bids are convex \textit{and close to the margin}, which means more likely to set the price.
This is a crucial observation for the smoothing effect analysed in our paper, which will be formalized later in section \ref{sec:model}.
Note that there is an explicit distinction in PJM market data between physical and financial bids. This is a major asset for our analysis, and it contrasts with European electricity markets, where bids often aggregate multiple assets and do not separate physical from financial offers. There, a high share of convex bids does not necessarily reflect active financial trading, but might relates to a more convex physics: a smoother operating day with fewer binding non-convex constraints. PJM data allows us to cleanly disentangle these two effects.

From the description of these different bids, it emerges that electricity auctions such as PJM are not pure combinatorial auctions: although they include non-convexities, they also include a fair amount of convex bids. 
Figure \ref{fig:PJMbids_categories} shows the daily \textit{cleared} volumes of INC, DEC and UTC as well as day-ahead self-schedule bids as a percentage of the total day-ahead load. 
Table \ref{tab:BidVolumePerCat} also provides more figures of the hourly average volume of the different category of bids, focusing on the period 2018 to 2021 which will be used in our empirical analysis.\footnote{We report here the \textit{cleared} volume of virtual bids (and not the bided volume). Self-scheduling are bids at a price of zero, thus bided and cleared volume should be the same. The non-convex dispatchable bids are computed using the data of dispatchable bids and keeping only the ones with positive fixed costs.}
We observe that the volume these convex bids is a significant share of the market.

\begin{figure}
\centering
\includegraphics[width=0.9\textwidth]{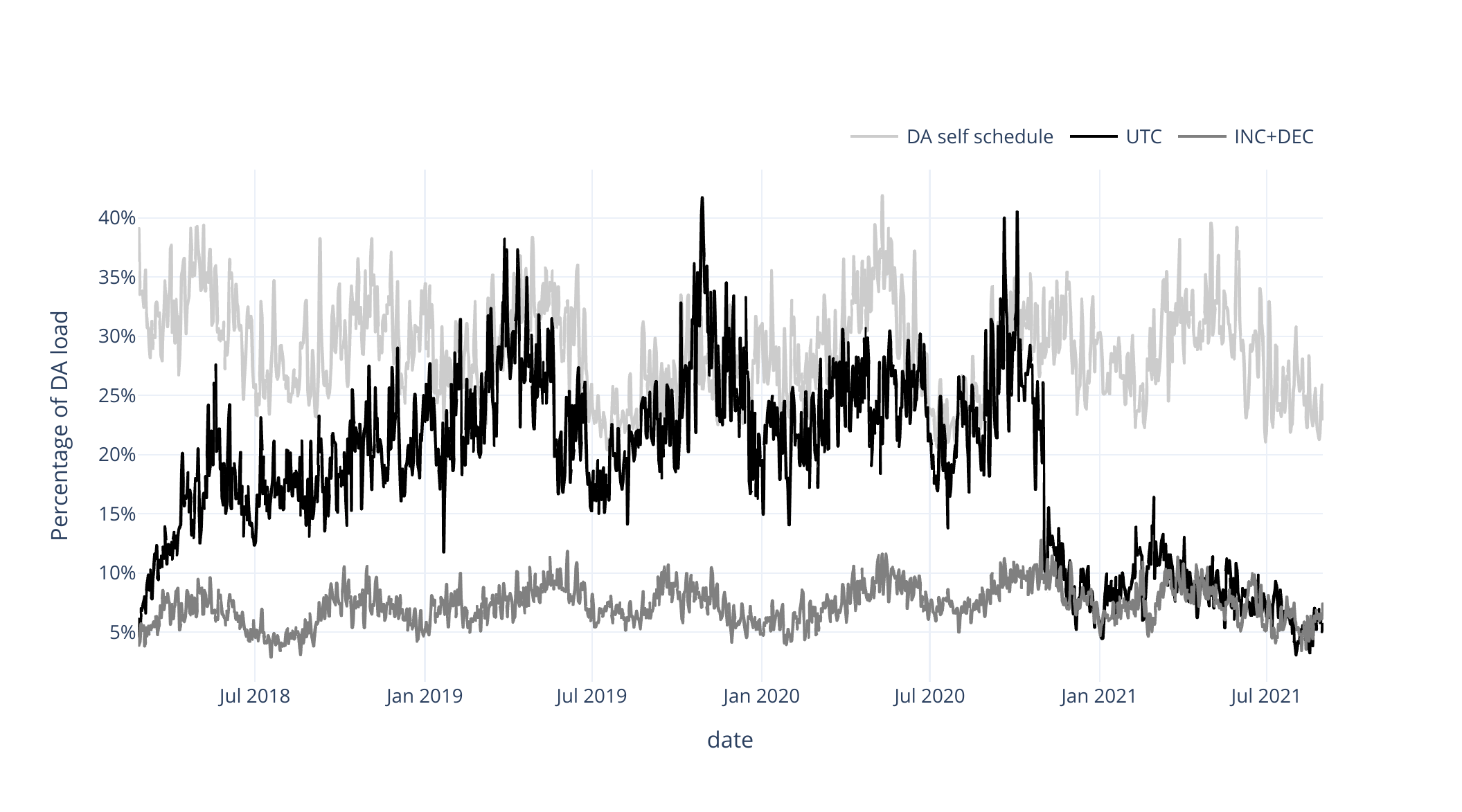}
\caption{PJM convex bid volume as a percentage of total load.}
\label{fig:PJMbids_categories}
\end{figure}

\begin{table}
\centering
\begin{tabular}{ll}
\toprule
Average hourly volumes & GW \\
\hline
Day-ahead load & 88 \\
\textit{Physical bids:} & \\
-- Available non-convex dispatchable bids & 139  \\
-- Day-ahead self-schedule bids  & 25  \\
\textit{Virtual bids:} & \\
-- Cleared INC bids & 2.5  \\
-- Cleared DEC bids & 3.8  \\
-- Cleared UTC bids & 15.4  \\
\bottomrule
\end{tabular}
\caption{Average hourly volume of bids per category, over the period 2018 to 2021 in PJM.}
\label{tab:BidVolumePerCat}
\end{table}

\paragraph*{Side payments.} 
Because an equilibrium is not guaranteed to exist in a non-convex market, PJM relies on discriminatory side payments (sometimes called ``uplift payments''\footnote{``Uplifts'' payments are paid for various reasons which are not all related to the non-convexities \textit{per se}, such as ancillary services uplifts and reliability services uplifts (cf. PJM manual 28 on Operating Agreement Accounting for rules on how these uplifts are computed \citep{PJM2025}). 
The side payments directly related to the non-convexities, which are shown in Figure \ref{fig:PJMuplifts} are the ``operating reserve uplifts'' paid both in the day-ahead and in the real-time markets, which correspond to: ``The total resource offer amount for generation, including startup and no-load costs as applicable, is compared to its total energy market value for specified operating period segments during the day [...] If the total value is less than the offer amount, the difference is credited to the PJM Member'' \citep{PJM2025}
A units whose day-ahead revenue, at the day-ahead price, does not cover his cost receives a DA operating reserve uplift credit. Then, in real-time, this unit might have a schedule that deviates from its day-ahead plan. She then receives balancing (real-time) operating reserve uplift credits for loss exposure in the spot market due to following PJM dispatch. 
We discuss more the different types of uplifts in Appendix \ref{sec:appendixPJMdata}.}) in order to restore the incentives of market participants to follow the market allocation. 
We use these side payments as a measure of how far the price-allocation pair stands from an equilibrium.
It can be shown more formally that when an equilibrium does not exist, the distance to an equilibrium is measured by the side payments (more formally, by the ``lost opportunity costs''\footnote{\cite{starr1969} and \cite{arrow1971} use total excess supply or demand to measure the distance to equilibrium: given a price, each participant maximizes her profit but the market clearing constraint might be violated as an equilibrium does not exist. This market clearing constraint violation (excess supply or demand) is used as a measure of distance to equilibrium.
Another way to measure the distance to equilibrium, which is natural in context of auctions where the auctioneer determine both the price and the cleared quantities, is with the concept of \textit{lost opportunity costs} (cf. \cite{stevens2024} for a formal argument and a detailed discussion). By definition, the quantities cleared by the auctioneer satisfy the market clearing constraint, but the cleared quantities might not maximize the profit of the market participants given the cleared price: they violate envy-freeness. The lost opportunity costs measure the monetary incentives that market participants have to deviate from the cleared quantities.
In particular, some market participants might be loosing money given the cleared price and quantities (cf. the example of Figure \ref{fig:exampleSidePayments}).
Therefore, the auctioneer complements the uniform price with side payments to make sure that exposed power plants break even.
These side payments typically only stand for a share of the lost opportunity costs, as some ``lost opportunities'' are not compensated (e.g. a market participant that is not producing, but could have made a positive profit at the cleared price is not compensated for his lost opportunity).
Thus the side payments is only approximative measure of the distance to equilibrium.}, cf. \cite{stevens2024}).
Small side payments are an indication that the cleared quantities and the uniform price are close to be an equilibrium, while large side payments are an indication the market outcome is further away from an equilibrium.
Large side payments indicate that the price signal does not reflect some relevant cost information.
Figures \ref{fig:PJMuplifts} shows the monthly side payments related to the non-convexities.
We observe that they stand for 7.9M\$/month and that most of these side payments are paid in real-time rather than in day-ahead.

\begin{figure}
\centering
\includegraphics[width=0.9\textwidth]{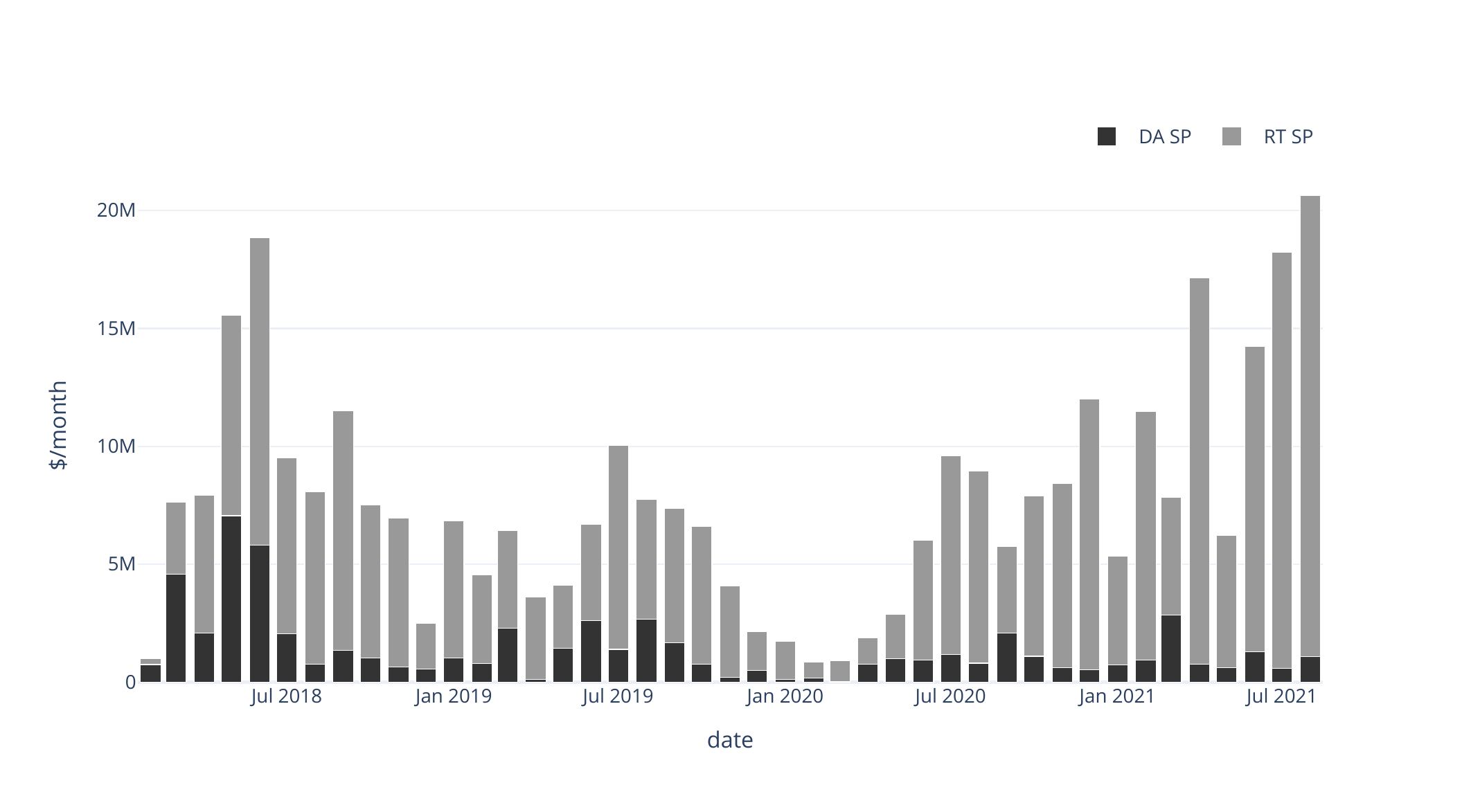}
\caption{PJM side payments due to non-convexities in day-ahead and real-time.}
\label{fig:PJMuplifts}
\end{figure}

\section{Model} \label{sec:model}
\paragraph*{Model environment.}
Let us consider a sequence of two markets: the day-ahead (DA) forward market and the real-time (RT) spot market. 
This sequence of markets are represented on Figure \ref{fig:modelEnv1}.
There are two types of agents: the virtual bidders, active in the DA market, who play as arbitrager ; and the physical bidders, active in both the DA and RT markets, who are producers of electricity, tied to some generation assets. 
The physical bidders incur both a fixed start-up cost $s$ and a variable cost of production.
These physical producers further split into two categories: the fast-star resources (FSR) and the slow-start resources (SSR). The FSR are flexible assets, which can be started-up in real-time even though they are not committed in the DA market. On the contrary, the SSR that are not committed in DA cannot participate in the RT market (this assumption aligns with Figure \ref{fig:PJMbids_meritOrder}, where we saw that a large share of the units committed in day-ahead self-schedule in real-time, thus are inflexible).
An SSR unit who is committed in DA can still be turned off in RT but the start-up cost $s$ is sunk.
There is an exogenous inelastic demand which is made of a predictable load $d$ and a random shock $\epsilon \in [-\overline{D}, \overline{D}]$ on demand, which is uniformly distributed: in DA, the demand is $d$ ($> \overline{D}$), while in RT, the demand is $d+ \epsilon$.
This exogenous shock should be viewed as representing all sorts of uncertainties that arise between day-ahead and real-time and which affect the net load served by SSR and FSR: uncertainties on the load, on wind production, on the availability of some base-load units (for instance due to forced outages), or on imports, exports and grid constraints.

\begin{figure}
\centering
\includegraphics[width=0.6\textwidth]{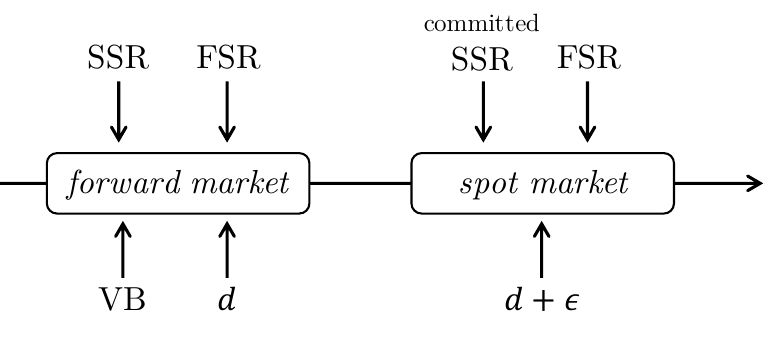}
\caption{Model environment: market timing.}
\label{fig:modelEnv1}
\end{figure}

We assume physical bidders bid truthfully in the sequential markets and that they are myopic: they do not anticipate the real-time conditions when bidding in day-ahead. 
To qualify this seemingly restrictive assumption, let us stress that physical assets could also participate as virtual bidders. Thus the myopic assumption should be viewed as an assumption that the arbitrage is made entirely through the virtual bids. 
In the model, we consider a continuum of slow start-units with a total cost equal to $C_S(q) = \frac{c_S}{2} q^2 + s_S q$ and a continuum of fast start-units with a total cost equal to $C_F(q) = c_F q + s_F q$.
There is a base-load slow-start unit of capacity $d$ with marginal cost equal to 0 and a fixed cost of $s_S d$.
We assume $c_F > c_S \overline{D} + s_S$ and $s_F > s_S$. That is, fast-start units are always more expensive than slow-start units.
These settings are represented on Figure \ref{fig:modelEnv}.

Note that our modelling assumptions (a two-stage market with perfectly inelastic demand, random real-time shocks, and higher-cost resources in real time than in the day-ahead market) provide a reasonable approximation of PJM market operations (cf. section \ref{sec:PJMmarket}).
They are also consistent with standard frameworks of financial trading encountered in the literature \citep{tang2016,mather2017}. In particular, the distinction between slow- and fast-start resources is well established in the electricity market literature \citep{eldridge2023,kazempour2017a,kazempour2017b}.

\begin{figure}
\centering
\includegraphics[width=0.95\textwidth]{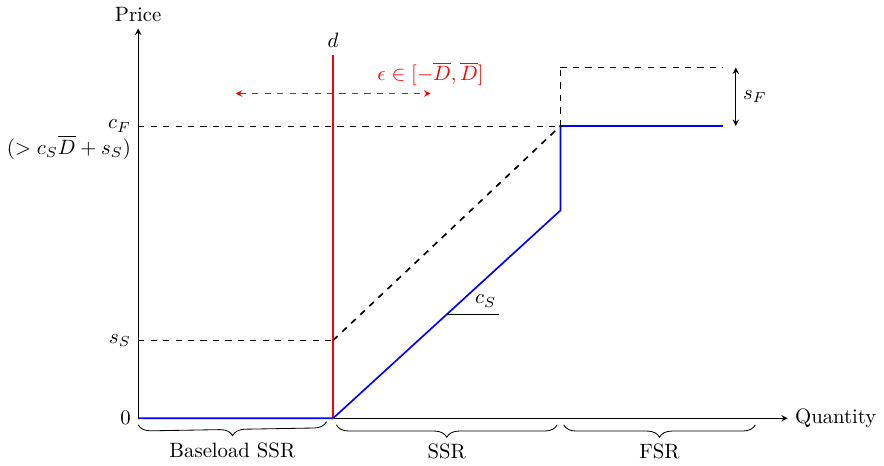}
\caption{Model environment: market participants.}
\label{fig:modelEnv}
\end{figure}

\paragraph*{Mechanism description.} Our model of a \textit{non-convex} market with arbitragers is slightly artificial, for it is in fact \textit{convex}. 
This modeling approach is justified by the fact that including non-convexities renders derivation of analytical results intractable. Yet, by making some assumptions on the allocation rule and settlement rule of the mechanism, we can mimic the behaviour of a non-convex auction.
We shall solve numerically at the end of this section an instance of a truly non-convex market to illustrate and confirm our analytical findings.
Meanwhile, we assume the following mechanism, illustrated in Figure \ref{fig:modelEnvMechanism}:
\begin{itemize}
\item \textit{Allocation rule}: the auctioneer clears the allocation that maximizes social welfare. That is: demand is cleared as long as its willingness-to-pay is higher than average cost of the next infinitesimal producers.
Figure \ref{fig:modelEnvMechanism} shows the two main scenarios. In scenario $VB_1$, the demand curve from the virtual bids intersects horizontally with the average cost curve of the SSR. These bids are thus partially cleared by the auctioneer until they cross with the average cost of SSR. Indeed, clearing $VB_1$ entirely would lower social welfare as their willingness-to-pay is lower than the average cost of the SSR above their curve. In scenario $VB_2$, the demand curve intersects vertically with the average cost curve of the SSR. These bids are fully cleared by the auctioneer.
\item \textit{Pricing and settlement rule}: the auctioneer uses marginal pricing rule, thus the marginal agent sets the price. In scenario $VB_1$, the virtual bids are partially cleared and thus set the price: $p_1 = VB_1$. In scenario $VB_2$, the virtual bids are fully cleared and the price is set by the marginal SSR: $p_2 = c_S VB_2$.
Producers who do not break even receive side payments to offset their losses.
\end{itemize}
An important observation is that the fixed costs $s_F$ and $s_S$ enter the allocation rule's computations, whereas they are not necessarily accounted for in the pricing rule.
This mimics the typical two-stage procedures used by electricity auctioneers who first solve a comprehensive unit commitment model to compute the allocation and then solve a convex restriction of the original problem in which the commitment decisions are fixed, and thus the startup costs omitted from the pricing problem \citep{oneill2005}.

\begin{figure}
\centering
\includegraphics[width=0.95\textwidth]{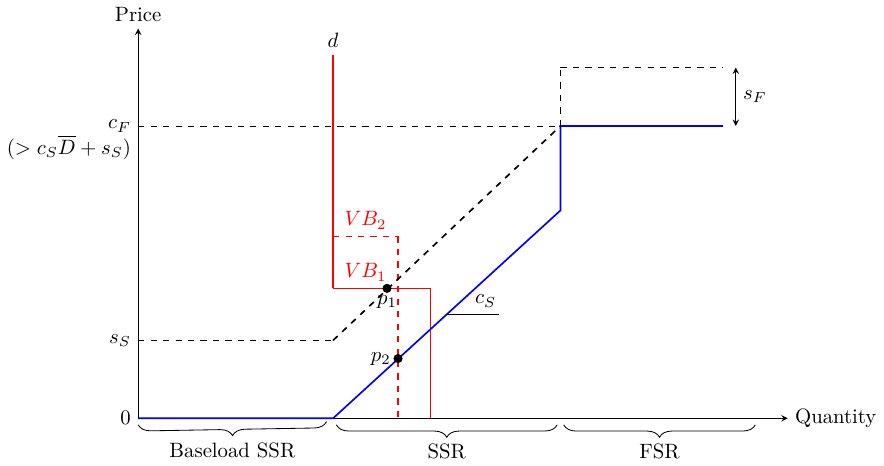}
\caption{Model environment: allocation and pricing rules.}
\label{fig:modelEnvMechanism}
\end{figure}

\paragraph*{Equilibrium without virtual trading.}
Without virtual trading, the equilibrium prices and side payments in day-ahead and real-time are:
\begin{align}
& p_{DA} = 0, ~~~~ SP_{DA} = s_S d \\
& \mathbb{E}_{\epsilon} (p_{RT, \epsilon}) = \frac{c_F}{2}, ~~~~ \mathbb{E}_{\epsilon} (SP_{RT,\epsilon}) = \frac{s_F \overline{D}}{4}
\end{align}
In day-ahead, the lumpy baseload unit covers the entire production, and there is no other activation of slow-start units. This is followed in real-time by lumpy activations of fast-start units, depending on the shock $\epsilon$. 
The baseload unit sets the price to zero in day-ahead, as well as in real-time when there is a negative shock. When the shock is positive, the fast-start units set the price in real-time. There is a price spread between DA and RT markets---as we shall see, virtual traders will arbitrage away this spread.
As the price does not cover the fixed costs, side payments are paid in DA to cover the fixed costs of the base-load SSR. Side payments are also paid in RT to cover the fixed costs related to the activation of FSR.
The total expected cost corresponds to both the baseload unit cost and the expected cost of activating fast-start units:
\begin{equation} \label{eq:costNoVB}
Cost = s_S d + \frac{\overline{D}}{4}(c_F + s_F)
\end{equation}

\paragraph*{Equilibrium with virtual trading.}
Let us consider the case of ``perfect'' virtual trading, where virtual traders compete \textit{\`a la Bertrand}: they compete by outbidding each other until the no-arbitrage condition holds.
This corresponds to arbitragers who bid an infinite quantity at a price $P$ such that $p_{DA} = \mathbb{E}_{\epsilon} (p_{RT, \epsilon})$. 
As shown in Figure \ref{fig:PJMbids_VB}, although actual virtual traders in PJM do not submit bids at the exact same price $P$, their bids remain relatively flat around the market price.

Virtual bidders anticipate the risk of real-time activations of fast-start units and their impact on prices. As a consequence, they buy more quantities in day-ahead, leading to the activation of $k$ slow-start units. The expected real-time price depends on the number of slow-start units $k$ cleared in day-ahead: this is expressed by equation \eqref{eq:VBequilPriceDA}. Furthermore, in order for unit $k$ to be cleared, the demand from virtual bidder should exceed its average cost (cf. the allocation and pricing rules above): this is expressed by equation \eqref{eq:VBequilPriceRT}.
Thus, the equilibrium prices as a function of $k$ are:
\begin{align}
p_{DA} & = P = c_S k + s_S  \label{eq:VBequilPriceDA} \\
\mathbb{E}_{\epsilon} (p_{RT, \epsilon}) & = \frac{1}{2 \overline{D}} \left( \int_0^k c_S \epsilon d\epsilon + \int_k^{\overline{D}} c_F d\epsilon     \right) 
 = \frac{1}{2 \overline{D}}  \left( \frac{c_S}{2} k^2 - c_F k + c_F \overline{D}  \right) \label{eq:VBequilPriceRT}
\end{align} 
Using the no-arbitrage condition, we get:
\begin{align}
0 & = \frac{c_S}{2} k^2 + k (- c_F - 2 c_S \overline{D}) + \overline{D} (c_F - 2 s_S) \label{eq:2orderk*}
\end{align}
Solving the second-order equation, given $0 \leq k \leq \overline{D}$ (notice that one need $c_F \geq 2 s_S$ to have $k^* \geq 0$), leads to
\begin{equation} \label{eq:k*}
k^* = \frac{c_F + 2 c_S \overline{D} - \sqrt{c_F^2 + (2 c_S \overline{D})^2 + 2c_F c_S \overline{D} + 4 c_S s_S \overline{D}}}{c_S}
\end{equation}

Inclusion of virtual trading leads to activation of $k^*$ SSR and reduces the price spread between day-ahead and real-time from $\frac{c_F}{2}$ to zero.
Simple comparative statics on $k^*$ leads to the following results: the lower the competitive advantage of SSR with respect to FSR, the fewer SSR are committed in day-ahead following virtual bidders' strategy.
\begin{myprop} \label{prop:comparativeStaticsK}
Under perfect virtual trading, $k^*$ slow-start resources are committed in day-ahead, such that (focusing on the case $k^* > 0$):
\begin{enumerate}
\item $\partial k^*/\partial s_S < 0$: the higher the fixed cost of slow-start units, the lower $k^*$.
\item $\partial k^*/\partial c_F > 0$: the higher the variable cost of fast-start units, the higher $k^*$.
\item $\partial k^*/\partial c_S < 0$: the higher the variable cost of slow-start units, the lower $k^*$.
\item $\partial k^*/\partial s_F = 0$: $k^*$ is insensitive to the fixed cost of fast-start units.
\end{enumerate}
\end{myprop}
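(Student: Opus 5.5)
The plan is to avoid differentiating the closed form \eqref{eq:k*} directly and instead apply the implicit function theorem to the quadratic \eqref{eq:2orderk*}, of which $k^*$ is a root. Define
\begin{equation*}
F(k; s_S, c_F, c_S, s_F) = \frac{c_S}{2} k^2 - k\,(c_F + 2 c_S \overline{D}) + \overline{D}\,(c_F - 2 s_S),
\end{equation*}
so that $F(k^*;\cdot) = 0$. For any parameter $\theta \in \{s_S, c_F, c_S, s_F\}$ this gives
\begin{equation*}
\frac{\partial k^*}{\partial \theta} = -\,\frac{\partial F/\partial \theta}{\partial F/\partial k}\Big|_{k = k^*}.
\end{equation*}
The first step is to fix the sign of the denominator. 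We have $\partial F/\partial k = c_S k - c_F - 2 c_S \overline{D}$. Since $k^* \le \overline{D}$, this is at most $-c_F - c_S\overline{D} < 0$. So $\partial F/\partial k$ is strictly negative at $k^*$. This justifies the implicit function theorem and shows that $\partial k^*/\partial\theta$ has the same sign as $\partial F/\partial\theta$.

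The second step is to check that \eqref{eq:k*} is the relevant root, namely the smaller root, taken with the minus sign. The product of the roots is $2\overline{D}(c_F - 2 s_S)/c_S$ and their sum is $2(c_F + 2c_S\overline{D})/c_S > 2\overline{D}$. Hence at most one root lies in $[0,\overline{D}]$, and it is the smaller one. This also confirms that $k^*$ is a smooth function of the parameters, because the discriminant is strictly positive.

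The four claims then reduce to reading off the partial derivatives of $F$:
\begin{itemize}
\item $\partial F/\partial s_S = -2\overline{D} < 0$, which gives item 1.
\item $\partial F/\partial c_F = \overline{D} - k^*$, which is strictly positive when $k^* < \overline{D}$. The boundary case $k^* = \overline{D}$ can be excluded by plugging $k = \overline{D}$ into $F$: this gives $F(\overline{D}) = -\tfrac{3}{2}c_S\overline{D}^2 - 2 s_S \overline{D} < 0$, so $k^* < \overline{D}$ strictly. This gives item 2.
\item $\partial F/\partial c_S = \tfrac{1}{2}(k^*)^2 - 2\overline{D} k^* = k^*\big(\tfrac{k^*}{2} - 2\overline{D}\big)$, which is strictly negative because $0 < k^* \le \overline{D}$. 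This is where the hypothesis $k^* > 0$ is used, and it gives item 3.
\item $s_F$ does not appear in $F$, so $\partial k^*/\partial s_F = 0$, which gives item 4.
\end{itemize}

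I expect no real obstacle. The only delicate points are the ones checked above: the sign of $\partial F/\partial k$ and the strictness $k^* < \overline{D}$ needed for item 2. As a fallback, the signs could be confirmed by differentiating \eqref{eq:k*} directly, but the implicit approach is cleaner.
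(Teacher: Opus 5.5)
Your proof is correct and is essentially the paper's argument: for items 2 and 3 the paper observes that raising $c_F$ (resp.\ $c_S$) shifts the convex quadratic \eqref{eq:2orderk*} up (resp.\ down) on $[0,\overline{D}]$, so its smaller root moves right (resp.\ left), which is your implicit-function computation stated qualitatively, while items 1 and 4 are read directly off the closed form \eqref{eq:k*}. Your version is more careful than the paper's, since you explicitly check that the slope is negative at $k^*$, that $k^*$ is the smaller root, that $k^*<\overline{D}$ strictly (needed for item 2), and where $k^*>0$ enters item 3.
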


The equilibrium expected side payments in the market that includes virtual trading are then as follows:
\begin{align}
& SP_{DA} = 0 \label{eq:VBequilSPDA} \\
& \mathbb{E}_{\epsilon} (SP_{RT,\epsilon}) = \frac{1}{2 \overline{D}} \int_k^{\overline{D}} s_F (\epsilon-k) d\epsilon = \frac{s_F \overline{D}}{4} - \frac{s_F k}{2} \left( 1 - \frac{k}{2 \overline{D}} \right) \label{eq:VBequilSPRT}
\end{align}

As far as side payments in day-ahead are concerned, we see that there is a discrete effect: introducing virtual trading, in our model, fully eliminates side payments. 
Regarding the side payments in real-time, the two main observations are: they are reduced but not eliminated, and they are linearly increasing with $s_F$, the fixed costs of FSR.

\begin{myprop}[Side payments 1] \label{prop:sidePayments}
Perfect virtual trading decreases expected side payments both in day-ahead and in real-time.
\end{myprop}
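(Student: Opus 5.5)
The plan is to compare directly the equilibrium side payments computed with and without virtual trading, treating the day-ahead and real-time markets separately.

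\textbf{Day-ahead.} Without virtual trading, the base-load unit covers all of $d$ at price $p_{DA}=0$, so $SP_{DA}=s_S d>0$. With perfect virtual trading, the day-ahead price is $p_{DA}=c_S k^*+s_S$ by \eqref{eq:VBequilPriceDA}, which yields $SP_{DA}=0$ as in \eqref{eq:VBequilSPDA}. To justify that value rather than just cite it, I will verify that every committed SSR breaks even at this price:
\begin{itemize}
\item the base-load unit earns $(c_Sk^*+s_S)d\geq s_S d$;
\item an infinitesimal SSR at position $q\leq k^*$ earns $c_Sk^*+s_S$ per unit, against an average cost of $c_Sq+s_S$, so its revenue covers its cost.
\end{itemize}
The strict decrease $s_Sd>0$ then follows immediately.

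\textbf{Real-time.} Comparing \eqref{eq:VBequilSPRT} with $\frac{s_F\overline{D}}{4}$, the reduction equals
\[
\Delta(k)=\frac{s_F k}{2}\left(1-\frac{k}{2\overline{D}}\right).
\]
This is nonnegative for $0\le k\le 2\overline{D}$, and strictly positive for $0<k\le\overline{D}$. Since $s_F>0$, it suffices to establish that $0<k^*\le\overline{D}$.

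\textbf{Bounding $k^*$.} Let $f(k)$ be the quadratic on the right-hand side of \eqref{eq:2orderk*}, which opens upward.
\begin{itemize}
\item $f(0)=\overline{D}(c_F-2s_S)$, which is positive under the assumed condition $c_F>2s_S$; this is the case $k^*>0$ singled out in Proposition~\ref{prop:comparativeStaticsK}.
\item $f(\overline{D})=\frac{c_S}{2}\overline{D}^2-c_F\overline{D}-2c_S\overline{D}^2+c_F\overline{D}-2s_S\overline{D}=-\tfrac32 c_S\overline{D}^2-2s_S\overline{D}<0$.
\end{itemize}
Because $f(0)>0>f(\overline{D})$, the smaller root $k^*$ given in \eqref{eq:k*} lies in $(0,\overline{D})$. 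Hence $\Delta(k^*)>0$ and expected real-time side payments strictly decrease.

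\textbf{Main obstacle.} The delicate step is justifying the formula \eqref{eq:VBequilSPRT} itself, in particular that real-time activations of FSR occur only for $\epsilon>k^*$. This requires arguing that committed SSR (with sunk start-up costs, marginal cost $c_S\epsilon$) serve positive shocks up to $k^*$ before any FSR is started, which uses $c_F>c_S\overline{D}+s_S$. It also requires arguing that negative shocks generate no side payments: the base-load unit and the committed SSR have already sunk their start-up costs, so their day-ahead settlement covers them.

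I would state this check briefly, taking the paper's derivation of \eqref{eq:VBequilSPRT} as given, and let the algebraic comparison above carry the proof.
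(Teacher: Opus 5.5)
Your proposal is correct and follows the paper's proof, which compares $SP_{DA}$ ($s_S d$ versus $0$) and notes that expected real-time side payments fall by $\frac{s_F k}{2}\left(1-\frac{k}{2\overline{D}}\right)$. Your extra checks make explicit what the paper leaves implicit: committed SSR break even at $p_{DA}=c_Sk^*+s_S$, and $f(0)>0>f(\overline{D})$ places $k^*\in(0,\overline{D})$. In particular, a strict real-time decrease needs $k^*>0$, i.e.\ $c_F>2s_S$, whereas the paper's remark ``$>0$ for any $k<\overline{D}$'' overlooks $k=0$.
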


\begin{myprop}[Side payments 2] \label{prop:sidePayments2}
Real-time side payments are linearly increasing with $s_F$.
\end{myprop}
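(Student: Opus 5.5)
The claim follows from the closed-form expected real-time side payment in \eqref{eq:VBequilSPRT}, once we note which quantities depend on $s_F$. The plan is to write
\[
\mathbb{E}_{\epsilon}(SP_{RT,\epsilon}) = s_F \left[ \frac{\overline{D}}{4} - \frac{k}{2}\left(1 - \frac{k}{2\overline{D}}\right) \right] = s_F \, g(k, \overline{D}),
\]
so that the expression is $s_F$ times a factor $g$ that depends only on $k$ and $\overline{D}$. Linearity in $s_F$ then reduces to checking that $g$ does not depend on $s_F$ and that $g \geq 0$, with strict positivity needed for ``strictly increasing''.

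First, I will invoke item 4 of Proposition \ref{prop:comparativeStaticsK}, which says $\partial k^*/\partial s_F = 0$. The reason is visible directly: the no-arbitrage condition \eqref{eq:2orderk*}, and hence \eqref{eq:k*}, involves only $c_S, c_F, s_S, \overline{D}$. This holds because $s_F$ enters neither the pricing rule (start-up costs are omitted from prices) nor, consequently, the expected real-time price \eqref{eq:VBequilPriceRT}. So at the equilibrium value $k = k^*$, the factor $g(k^*, \overline{D})$ is a constant with respect to $s_F$, and the side payment is linear in $s_F$ with slope $g(k^*, \overline{D})$.

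Second, I need to show the slope is nonnegative and in fact positive. The factor simplifies to
\[
g(k, \overline{D}) = \frac{1}{2\overline{D}} \int_k^{\overline{D}} (\epsilon - k)\, d\epsilon = \frac{(\overline{D} - k)^2}{4\overline{D}},
\]
which is nonnegative and is strictly positive whenever $k^* < \overline{D}$. The main obstacle, which is still mild, is to justify $k^* < \overline{D}$ from \eqref{eq:k*} under the maintained assumption $c_F > c_S \overline{D} + s_S$. To do this I will evaluate the quadratic in \eqref{eq:2orderk*} at $k = \overline{D}$:
\[
\frac{c_S}{2}\overline{D}^2 - c_F \overline{D} - 2c_S \overline{D}^2 + c_F \overline{D} - 2 s_S \overline{D} = -\overline{D}\left(\tfrac{3}{2} c_S \overline{D} + 2 s_S\right) < 0.
\]
The quadratic is convex, and at $k = 0$ it equals $\overline{D}(c_F - 2 s_S)$, which is nonnegative given $c_F \geq 2 s_S$. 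It therefore has exactly one root in $[0, \overline{D})$, and that root is $k^*$. This gives $k^* < \overline{D}$, so the slope $(\overline{D} - k^*)^2/(4\overline{D})$ is strictly positive.

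Finally, I will note that the same argument covers the no-virtual-trading benchmark, which is the case $k = 0$ with slope $\overline{D}/4$. So in both regimes real-time side payments are linear and increasing in $s_F$.
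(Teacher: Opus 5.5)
Your proof is correct and follows the paper's argument: since $k^*$ does not depend on $s_F$ (item 4 of Proposition \ref{prop:comparativeStaticsK}), the expected real-time side payment is $s_F$ times the slope $\frac{1}{2\overline{D}}\int_{k^*}^{\overline{D}}(\epsilon-k^*)\,d\epsilon=(\overline{D}-k^*)^2/(4\overline{D})$, which is positive. Your explicit check that $k^*<\overline{D}$, from the sign of \eqref{eq:2orderk*} at $k=\overline{D}$, fills in a step the paper leaves implicit; that check needs only convexity ($c_S>0$), not the assumption $c_F>c_S\overline{D}+s_S$ you cite.
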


This is illustrated in Figure \ref{fig:SP_k}. There, we see $k^*$ reduces side payments with respect to the case of no virtual trading (cf. Proposition \ref{prop:sidePayments}). The figure also shows that the side payments scale linearly with $s_F$ (cf. Proposition \ref{prop:sidePayments2}): given $k^*$, whose value does not change with $s_F$, going from $s_F=0.5$ to $s_F=1$ implies the same increase in side payments as going from $s_F=1$ to $s_F=1.5$.

These results follow virtual trading bidding behaviour, which has two main features: (i) they anticipate real-time conditions and internalise the fixed costs of the physical resources (cf. equation \eqref{eq:VBequilPriceDA}) and (ii) they set the price in DA. Together, this leads to a day-ahead price that reflects the fixed costs, thus side payments drop to zero ; and it also entails better commitment decisions in day-ahead that lead to fewer activations of fast start resources in real-time, thus less side payments in real-time.

Let us make two remarks. 
First, real-time side payments are not eliminated by virtual trading. Because the physical real-time market involves non-convexities in the production, the uniform price alone might not clear the market and still requires side payments.
One way---which we will not discuss further in our paper---to improve these issues is by amending the auction pricing rule itself, deviating from marginal pricing, to reflect some of the fixed costs into the price signal. In our model, this would for instance mean setting the price to $c_F + s_F$ instead of $c_F$ when FSR are activated in real-time. This pricing rule would eliminate the need for side payments in real-time, in our model. This is the spirit of the so-called ``fast-start pricing approaches'' discussed in the literature and implemented by some US ISOs.
Our goal is thus not to argue that these discussions on the right pricing rule are irrelevant---it remains important and it is compatible with virtual trading---but to highlight the implications of virtual trading itself.
Second, virtual trading fully eliminates day-ahead side payments. One caveat though is that our model assumes a competition \textit{\`a la Bertrand} with an infinite quantity of virtual traders, bidding at a price $P$.
Following our pricing rule definition, it implies the day-ahead price is set by $P$. If instead there was a limited volume of virtual traders, the demand curve could intersect the supply curve vertically (cf. Figure \ref{fig:modelEnvMechanism}). As a consequence, SSR $k$ would set the price to $c_S k$, which would entail some side payments to cover the fixed costs.
Besides, our model also neglects the multi-period nature of the market, inter-temporal constraints and most unit commitment constraints that are encountered in actual markets.
Thus, although our model predicts that virtual trading fully eliminates DA side payments, this result should be viewed as a theoretical benchmark, and observed outcomes may depart from it for several reasons (we come back to this in section \ref{sec:empiricalStrat}).

\begin{figure}
\centering
\includegraphics[width=0.8\textwidth]{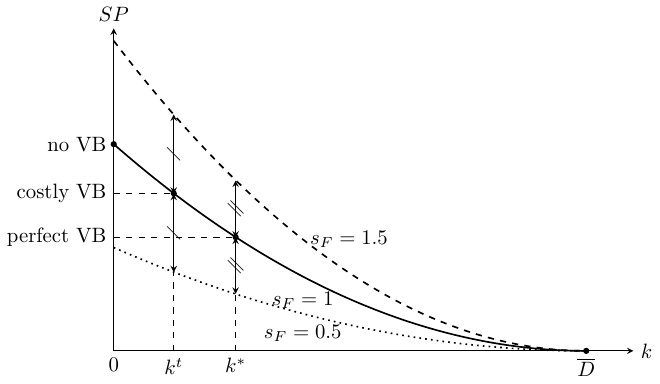}
\caption{Side payments as a function of $k$ (with $d=2$, $\overline{D}=1$, $c_S=1$, $s_S=0.5$, $c_F=2$, $t=0.25$) and three values of $s_F$: $s_F=0.5$, $s_F=1$ and $s_F=1.5$.}
\label{fig:SP_k}
\end{figure}

Let us turn to the analysis of the expected costs.
With virtual trading, the total expected cost as a function of $k$ is:
\begin{align}
& Cost(k) = s_S d + s_S k + \frac{1}{2 \overline{D}} \left( \int_0^k \frac{c_S}{2} \epsilon^2 d\epsilon + \int_k^{\overline{D}} \frac{c_S k^2}{2} + (c_F + s_F) (\epsilon - k) d\epsilon    \right) \\
&= \underbrace{s_S d + \frac{\overline{D}}{4}(c_F + s_F)}_{\text{cost without VB}} + \underbrace{s_S k}_{\text{sunk cost}} - \underbrace{\frac{k^3 c_S}{6 \overline{D}} +\frac{k^2(c_F+s_F)}{4 \overline{D}} + \frac{k^2 c_S}{4} - k\frac{(c_F + s_F)}{2}}_{\text{expected cost savings}} \label{eq:cost_withVB}
\end{align}
The cost is a sum of three terms: (i) the cost without virtual virtual trading, (ii) the sunk cost implied by committing $k$ SSR and (iii) the cost savings entailed by committing $k$ SSR.
Comparing expression \eqref{eq:cost_withVB} with expression \eqref{eq:costNoVB}, direct mathematical derivations lead to the following observation:
\begin{myprop}[Efficiency 1] \label{prop:efficiency1}
Perfect virtual trading improves cost efficiency compare to no virtual trading.
\end{myprop}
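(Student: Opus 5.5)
The plan is to show $Cost(k^*) < Cost(0)$. Here $Cost(0)=s_S d + \frac{\overline{D}}{4}(c_F+s_F)$ is exactly the cost without virtual trading in \eqref{eq:costNoVB}. Rather than comparing the closed forms \eqref{eq:cost_withVB} and \eqref{eq:costNoVB} at the explicit root \eqref{eq:k*}, which involves the square root and would be messy, I will work with the function $\Delta(k) = Cost(k) - Cost(0)$ on $[0,\overline{D}]$. It satisfies $\Delta(0)=0$, so it suffices to show $\Delta'(k) < 0$ for $k \in (0,k^*]$ and then integrate. The case $k^*=0$ is trivial (equal costs), so as in Proposition \ref{prop:comparativeStaticsK} I focus on $k^*>0$.

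\emph{Step 1: compute $\Delta'$ from the integral form.} I will differentiate the first line of the cost expression with Leibniz's rule. The boundary terms at $\epsilon=k$ cancel, because the integrands $\frac{c_S}{2}k^2$ and $\frac{c_S k^2}{2} + (c_F+s_F)\cdot 0$ coincide there. This gives
\begin{equation*}
\Delta'(k) = s_S + \frac{1}{2\overline{D}} (\overline{D}-k)\left(c_S k - c_F - s_F\right).
\end{equation*}
The interpretation is that one more SSR costs $s_S$ up front and saves $c_F+s_F-c_S k$ in every state where FSR would otherwise be activated.

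\emph{Step 2: compare with the no-arbitrage gap.} Define $g(k) = p_{DA} - \mathbb{E}_\epsilon(p_{RT,\epsilon})$ using \eqref{eq:VBequilPriceDA} and \eqref{eq:VBequilPriceRT}. Then
\begin{equation*}
g(k) = s_S + c_S k - \frac{c_S k^2}{4\overline{D}} + \frac{c_F k}{2\overline{D}} - \frac{c_F}{2},
\end{equation*}
and $g(k^*)=0$ by \eqref{eq:2orderk*}. Expanding $\Delta'$ and subtracting, I expect
\begin{equation*}
\Delta'(k) - g(k) = -\frac{c_S k}{2} - \frac{c_S k^2}{4\overline{D}} - \frac{s_F(\overline{D}-k)}{2\overline{D}}.
\end{equation*}
This is strictly negative for $k\in(0,\overline{D}]$. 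The key point is that the virtual traders' pricing ignores $s_F$, and the remaining terms are also favourable.

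\emph{Step 3: sign of $g$ on $[0,k^*]$.} We have $g(0) = s_S - c_F/2 \le 0$, using the condition $c_F \ge 2 s_S$ already needed for $k^*\ge 0$. Moreover $g'(k) = c_S\left(1 - \frac{k}{2\overline{D}}\right) + \frac{c_F}{2\overline{D}} > 0$ on $[0,\overline{D}]$. Hence $g$ is increasing from a nonpositive value to $g(k^*)=0$, so $g\le 0$ on $[0,k^*]$. Combined with Step 2, $\Delta'(k)<0$ on $(0,k^*]$, and therefore $\Delta(k^*)<0$, which proves the proposition.

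The main obstacle is the algebra in Step 2. The comparison only works if the expansion of $\Delta'$ is done carefully and the sign conventions in the underbraced "expected cost savings" term of \eqref{eq:cost_withVB} are read correctly. That is why I will differentiate the integral form directly rather than the expanded formula. If the difference in Step 2 did not turn out sign-definite, my fallback would be to use concavity of $\Delta'$ in $k$ together with its values at $0$ and $k^*$.
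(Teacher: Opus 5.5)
Your proof is correct, and it takes a different route from the paper's. The paper works only at the root. It divides $Cost(k^*)-Cost(0)$ by $k^*$ and uses the no-arbitrage quadratic \eqref{eq:2orderk*} to eliminate $k^2$, which leaves a function $L(k)$ affine in $k$. It then checks $L(0)<0$ and $L(\overline{D})<0$, invoking $c_S\overline{D}+s_S<c_F$ and $s_F>s_S$. You instead split the marginal cost as $\Delta'(k)=g(k)+\bigl(\Delta'(k)-g(k)\bigr)$. Here $g$ is the DA--RT price gap that virtual traders arbitrage away. The wedge $-\frac{c_S k}{2}-\frac{c_S k^2}{4\overline{D}}-\frac{s_F(\overline{D}-k)}{2\overline{D}}$ is the part of the social marginal saving that prices do not convey. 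Both your Leibniz computation and this identity check out, and differentiating the integral form was a sound way to avoid the ambiguous sign in the underbrace of \eqref{eq:cost_withVB}. The paper's version is a shorter, purely algebraic verification at a single point. Yours gives more:
\begin{itemize}
\item It shows that $Cost$ is strictly decreasing on all of $[0,k^*]$. This yields $Cost(k^*)\le Cost(k^t)\le Cost(0)$ in Proposition \ref{prop:transactionCost} for every $k^t\in[0,k^*]$, without the convexity argument.
\item $\Delta'(k^*)<0$ immediately places any cost minimizer strictly to the right of $k^*$, which is Proposition \ref{prop:efficiency2}.
\item It makes the paper's economic story (omitted $s_F$, real-time inflexibility) explicit in a formula.
\item It needs nothing beyond $c_F\ge 2s_S$ and positivity of the parameters.
\end{itemize}
One small point should be stated explicitly: the sign of your wedge uses $k\le\overline{D}$. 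This holds at $k^*$ by your own Step 3, since $g(\overline{D})=s_S+\frac{3}{4}c_S\overline{D}>0$ and $g$ is increasing, so $k^*<\overline{D}$.
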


Although virtual trading improves efficiency, it does not reproduce the first-best commitment decisions. The reason is that the virtual traders only react to the day-ahead and real-time price signals. With non-convexities, the price does not convey all the relevant cost information---mainly, it overlooks the fixed costs. For example, $k^*$ does not depend on $s_F$: $\partial k^*/\partial s_F = 0$. While clearly, if $s_F \rightarrow \infty$, assuming $s_S$ remains bounded, it would be optimal to commit all slow-start units, thus having $k = \overline{D}$.
\begin{myprop}[Efficiency 2] \label{prop:efficiency2}
Perfect virtual trading does not reproduce the first-best efficient day-ahead commitment plan $k^{**}$: it entails lower commitments of SSR than optimal, thus $k^* < k^{**}$.
\end{myprop}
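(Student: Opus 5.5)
The plan is to define the first-best commitment $k^{**}$ as the minimizer of the expected cost $Cost(k)$ over $k\in[0,\overline{D}]$. I will then show three things: that $Cost$ is strictly convex on this interval, that $k^*\in(0,\overline{D})$, and that $Cost'(k^*)<0$. Together these give $k^{**}>k^*$: either $Cost$ has an interior stationary point to the right of $k^*$, or the minimum sits at the corner $k^{**}=\overline{D}>k^*$.

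\emph{Derivative and convexity.} I will differentiate the integral form of $Cost(k)$ directly rather than the expanded expression \eqref{eq:cost_withVB}, using Leibniz's rule. The boundary terms at $\epsilon=k$ cancel, because both integrands equal $\frac{c_S}{2}k^2$ there. This leaves
\begin{equation*}
Cost'(k) = s_S + \frac{(\overline{D}-k)(c_S k - c_F - s_F)}{2\overline{D}} .
\end{equation*}
Differentiating once more gives
\begin{equation*}
Cost''(k)=\frac{1}{2\overline{D}}\bigl[(c_F+s_F-c_S k) + c_S(\overline{D}-k)\bigr].
\end{equation*}
This is strictly positive on $[0,\overline{D}]$, since the assumption $c_F>c_S\overline{D}+s_S$ gives $c_F>c_S k$. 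So the minimizer is unique, and the sign of $Cost'$ at any point tells us on which side of that point it lies.

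\emph{Location of $k^*$.} Write $Q(k)$ for the quadratic in \eqref{eq:2orderk*}. Then $Q(0)=\overline{D}(c_F-2s_S)>0$ in the case $k^*>0$ considered in Proposition \ref{prop:comparativeStaticsK}. Also
\begin{equation*}
Q(\overline{D})=-\tfrac{3}{2}c_S\overline{D}^2-2s_S\overline{D}<0,
\end{equation*}
so the root $k^*$ given in \eqref{eq:k*} lies strictly inside $(0,\overline{D})$.

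\emph{Sign of $Cost'(k^*)$ (the key step).} I will compare $Cost'(k)$ with the arbitrage gap $G(k)=p_{DA}-\mathbb{E}_\epsilon(p_{RT,\epsilon})$, built from \eqref{eq:VBequilPriceDA}--\eqref{eq:VBequilPriceRT}. The no-arbitrage condition says exactly that $G(k^*)=0$. Expanding both expressions, a direct computation should give
\begin{equation*}
Cost'(k)-G(k) = -\frac{c_S k}{2}-\frac{c_S k^2}{4\overline{D}}-\frac{s_F(\overline{D}-k)}{2\overline{D}} .
\end{equation*}
Evaluating at $k^*$, every term on the right is nonpositive, and the last is strictly negative because $s_F>0$ and $k^*<\overline{D}$. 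Hence $Cost'(k^*)<0$, which finishes the argument.

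The economic reading of this identity is the point of the proposition. The marginal social cost of committing more slow-start units differs from the price signal that virtual traders arbitrage. The largest piece of that difference is the term $-s_F(\overline{D}-k)/(2\overline{D})$: the expected avoided start-up costs of fast-start units, which marginal pricing leaves out of $p_{RT}$. This is consistent with $\partial k^*/\partial s_F=0$ in Proposition \ref{prop:comparativeStaticsK}, while $k^{**}$ increases with $s_F$.

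The main obstacle is this algebraic comparison. The expanded form \eqref{eq:cost_withVB} is easy to mis-sign, so I will work only from the integral definitions and check the identity at $k=0$. There $Cost'(0)=s_S-(c_F+s_F)/2$ and $G(0)=s_S-c_F/2$, so the difference is $-s_F/2$, which matches the claimed formula.
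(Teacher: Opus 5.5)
Your proof is correct and follows essentially the paper's route: you establish convexity of $Cost$ on $[0,\overline{D}]$ and then compare the first-order condition for the optimum with the no-arbitrage condition, whose difference has a fixed sign there. Your identity for $Cost'(k)-G(k)$ is exactly the paper's difference between the quadratics in \eqref{eq:optimalK} and \eqref{eq:2orderk*}, multiplied by $-1/(2\overline{D})$. Evaluating its sign at $k^*$ and covering the corner case is a slightly more explicit way to finish than the paper's ``smallest root'' comparison.
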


Propositions \ref{prop:efficiency1} and \ref{prop:efficiency2} are illustrated on Figure \ref{fig:costCurvek}. Virtual trading entails committing $k^*$ SSR in day-ahead which reduces the overall costs compared to the situation without virtual trading (Proposition \ref{prop:efficiency1}). Yet, it does not lead to the first-best commitment plan $k^{**}$ (the optimum of social planner's two-stage stochastic optimization problem) but leads to under-commitment, thus $k^* < k^{**}$ (Propositions \ref{prop:efficiency2}).\footnote{Notice that $s_F=0$ does not imply $k^*=k^{**}$: the fixed costs $s_S$ also distort incentives as they are reflected in the day-ahead price through the virtual traders but not in the real-time prices. More surprisingly, $s_F=s_S=0$ would not imply $k^*=k^{**}$ either, in our model.
If $s_S=0$ then $k^{**}=\overline{D}$: there is no stranded cost involved in committing SSR thus it is optimal to commit them all. 
While $k^* < \overline{D}$ for $s_S=0$, as $k^* = \overline{D}$ would imply the day-ahead price is always greater than the real-time price, thus the virtual traders would not achieve this full commitment outcome.
The reason of this failure is that the commitment decisions in day-ahead entail some inflexibility in real-time. Even with $s_F=s_S=0$, the commitment leads to virtual trading that fails to be fully efficient.}

\begin{figure}
\centering
\includegraphics[width=0.8\textwidth]{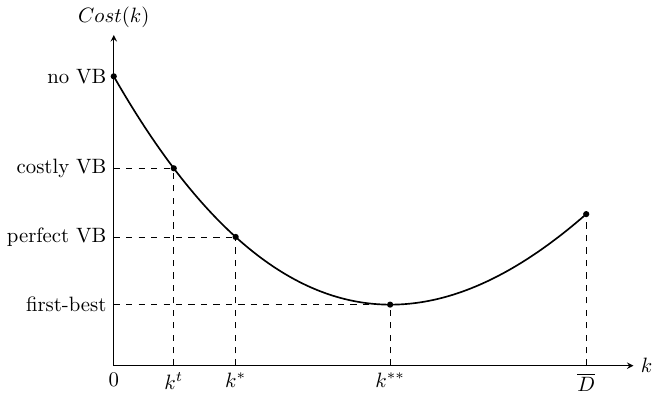}
\caption{Total expected costs as a function of $k$ (with $d=2$, $\overline{D}=1$, $c_S=1$, $s_S=0.5$, $c_F=2$, $s_F=1$, $t=0.25$).}
\label{fig:costCurvek}
\end{figure}

Let us finally remark that, in \textit{some} scenarios, virtual trading may reduce efficiency. For instance, for $\epsilon = -\overline{D}$, the cost would have been lower without financial trading. However, what matters is that \textit{in expectation} financial trading improves efficiency.\footnote{\cite{PJM2015} argued that virtual trading can deter efficiency based on some deterministic examples that assume an exogenous bidding behavior for financial traders, and as we argue, what matters is what happens \textit{in expectation} and \textit{at equilibrium, and} we further discuss this in section \ref{sec:ccl}.} 

\paragraph*{Equilibrium with costly virtual trading.}
Let us consider the case of ``costly'' virtual trading where a transaction cost $t$ is introduced. Virtual traders competing \textit{\`a la Bertrand} now leads to the following arbitrage condition: $\mathbb{E}_{\epsilon} (p_{RT, \epsilon}) - p_{DA} = t$, thus the price spread must exactly refund the transaction fee.
Using a similar reasoning as previously, one can compute the SSR commitments $k^t$ resulting from virtual trading with a transaction cost $t$. This corresponds to the $k^t \in [0, \overline{D}]$ solving:
\begin{equation} \label{eq:kcondition_with_t}
0 = \frac{c_S}{2} k^2 + k (- c_F - 2 c_S \overline{D}) + \overline{D} (c_F - 2 s_S - 2 t) 
\end{equation}
Condition \eqref{eq:kcondition_with_t} is the conterpart of \eqref{eq:2orderk*} when transaction cost $t$ is included. Comparing \eqref{eq:kcondition_with_t} and \eqref{eq:2orderk*} directly leads to $k^t \leq k^*$ (notice that one need $t \leq \frac{c_F - 2s_S}{2}$ to have $k_t \geq 0$), and $\frac{\partial k^t}{\partial t} < 0$, thus the higher the fee the lower $k^t$. A transaction fee implies that less financial trading volume gets cleared by the auction, and the higher the fee $t$ the lower the cleared volume. 

All the equilibrium results derived above as functions of $k$ still hold, substituting $k$ with $k^t$ (the prices in equations \eqref{eq:VBequilPriceDA}--\eqref{eq:VBequilPriceRT}, the side payments in equations \eqref{eq:VBequilSPDA}--\eqref{eq:VBequilSPRT} and the cost in equation \eqref{eq:cost_withVB}). 
Concretely, the side payments are reduced compared to no-virtual trading but this improvement shrinks as $t$ increases. This is illustrated on Figure \ref{fig:SP_k}.
Similarly, the cost is improved compared to no-virtual trading but this improvement shrinks as $t$ increases. This is illustrated on Figure \ref{fig:costCurvek}.

\begin{myprop}[VB with transaction cost $t$] \label{prop:transactionCost}
Virtual trading with transaction cost $t$ leads to $k^t \leq k^*$ such that:
\begin{itemize}
\item $Cost(k^*) \leq Cost(k^t) \leq Cost(0)$
\item $SP(k^*) \leq SP(k^t) \leq SP(0)$
\end{itemize}
\end{myprop}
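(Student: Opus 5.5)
The plan is to reduce both chains of inequalities to monotonicity of scalar functions of $k$ on the relevant interval, and then use the ordering $0 \leq k^t \leq k^*$. Write $g_t(k) = \frac{c_S}{2}k^2 - k(c_F + 2c_S\overline{D}) + \overline{D}(c_F - 2s_S - 2t)$, so that $k^t$ is the root of $g_t$ in $[0,\overline{D}]$. The first step settles the ordering. On $[0,\overline{D}]$ we have $g_t'(k) = c_S k - c_F - 2c_S\overline{D} < 0$, so $g_t$ is strictly decreasing there and has at most one root. Since $g_t = g_0 - 2\overline{D}t$, the curve shifts downward as $t$ grows. Its root therefore moves left, which gives $k^t \leq k^* = k^0$ and $\partial k^t/\partial t < 0$ by the implicit function theorem. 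Also $g_t(0) = \overline{D}(c_F - 2s_S - 2t) \geq 0$ under the stated condition $t \leq (c_F - 2s_S)/2$, so $k^t \geq 0$. Together these give $0 \leq k^t \leq k^*$.

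The side-payment chain comes next. Let $SP(k) = SP_{DA}(k) + \mathbb{E}_\epsilon(SP_{RT,\epsilon})(k)$. The real-time part, from \eqref{eq:VBequilSPRT}, is
\[
\frac{s_F\overline{D}}{4} - \frac{s_F}{2}\left(k - \frac{k^2}{2\overline{D}}\right),
\]
and its derivative is $-\frac{s_F}{2}\left(1 - \frac{k}{\overline{D}}\right) \leq 0$ on $[0,\overline{D}]$, so it is nonincreasing in $k$. Day-ahead side payments are $s_S d$ at $k=0$, which is the no-trading case $SP(0)$, and $0$ for any virtual-trading equilibrium by \eqref{eq:VBequilSPDA}. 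Combining the two parts with $0 \leq k^t \leq k^*$ yields $SP(k^*) \leq SP(k^t) \leq SP(0)$, separately for the day-ahead and real-time components and hence for their sum. This is essentially Proposition \ref{prop:sidePayments} applied at $k^t$ instead of $k^*$.

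The cost chain is the main obstacle, because $Cost(k)$ in \eqref{eq:cost_withVB} is not monotone on all of $[0,\overline{D}]$: it decreases up to the first-best $k^{**}$ and may increase afterwards. I would compute
\[
Cost'(k) = s_S - \frac{c_S k^2}{2\overline{D}} + \frac{k(c_F+s_F)}{2\overline{D}} + \frac{c_S k}{2} - \frac{c_F+s_F}{2},
\]
which is a concave quadratic, and show that $Cost'(k) \leq 0$ on $[0,k^*]$. For that I intend to invoke Proposition \ref{prop:efficiency2}, which gives $k^* < k^{**}$, where $k^{**}$ minimizes $Cost$. A concave $Cost'$ with $Cost'(0) = s_S - (c_F+s_F)/2 < 0$ (from $c_F \geq 2s_S$ and $s_F > 0$) and $Cost'(k^{**}) = 0$ at the first interior zero must be nonpositive on $[0,k^{**}] \supseteq [0,k^*]$. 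If I cannot rely on the first-zero structure, the fallback is to evaluate $Cost'$ at $k^*$ directly using $g_0(k^*) = 0$ to eliminate $\overline{D}(c_F - 2s_S)$, and check that the resulting expression has the right sign. So $Cost$ is nonincreasing on $[0,k^*]$, and with $0 \leq k^t \leq k^*$ this gives $Cost(k^*) \leq Cost(k^t) \leq Cost(0)$. The right-hand value coincides with the no-trading cost \eqref{eq:costNoVB}, consistent with Proposition \ref{prop:efficiency1}.
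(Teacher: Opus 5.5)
Your proposal is correct and follows essentially the paper's route: the downward shift of the quadratic gives $k^t\le k^*$, the derivative $\frac{s_F}{2}\bigl(\frac{k}{\overline{D}}-1\bigr)\le 0$ gives the side-payment chain, and $Cost$ being nonincreasing on $[0,k^{**}]$ together with $k^t\le k^*\le k^{**}$ (Proposition \ref{prop:efficiency2}) gives the cost chain. The one step to tighten is the ``first zero'' property of $k^{**}$. The paper gets it from $\partial^2 Cost/\partial k^2>0$ on $[0,\overline{D}]$, using $c_F>c_S\overline{D}+s_S$; for your concave quadratic $Cost'$ it follows equivalently from $Cost'(\overline{D})=s_S>0$. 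Your fallback needs this fact too, because $Cost'(0)<0$ and $Cost'(k^*)<0$ alone do not rule out a concave $Cost'$ turning positive somewhere inside $[0,k^*]$.
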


\paragraph*{Main takeaways.} 
To sum-up, there are three main takeaways from our model. (i) First, virtual trading improves price convergence between DA and RT, although a transaction fee $t$ would entail a persistent price difference of $t$. (ii) Second, more virtual trading leads to better commitment decisions in DA (more activations of slow-start units), which lowers the expected costs. Both the improvement of price convergence and the reduction of total cost have been analysed in the literature (see, for instance, \cite{mather2017} for an analytical model, \cite{kazempour2017a,kazempour2017b} for numerical simulations relying on a comprehensive electricity auction model and \cite{jha2023} for empirical evidences from CAISO market).
(iii) The third effect---and most important for our analysis---is that higher virtual trading mitigates the need for side payments, thus it leads to a uniform pricing outcome closer to an equilibrium.
Large side payments means some cost information is not well reflected in the market price and is instead handled by out-of-market payments, while low side payments means the uniform price better signals the overall cost information. 
Here, there are two distinct effects.
In DA, virtual trading set the price leading to prices that endogenise the fixed costs of SSR which eliminates the day-ahead side payments. 
In RT, as virtual traders have anticipated the risk of FSR activation, the likelihood of lumpy FSR activations is reduced which mitigates the associated side payments.

\begin{figure}[t!]
    \centering
    \begin{subfigure}[b]{0.5\textwidth}
        \centering
        \includegraphics[width=\textwidth]{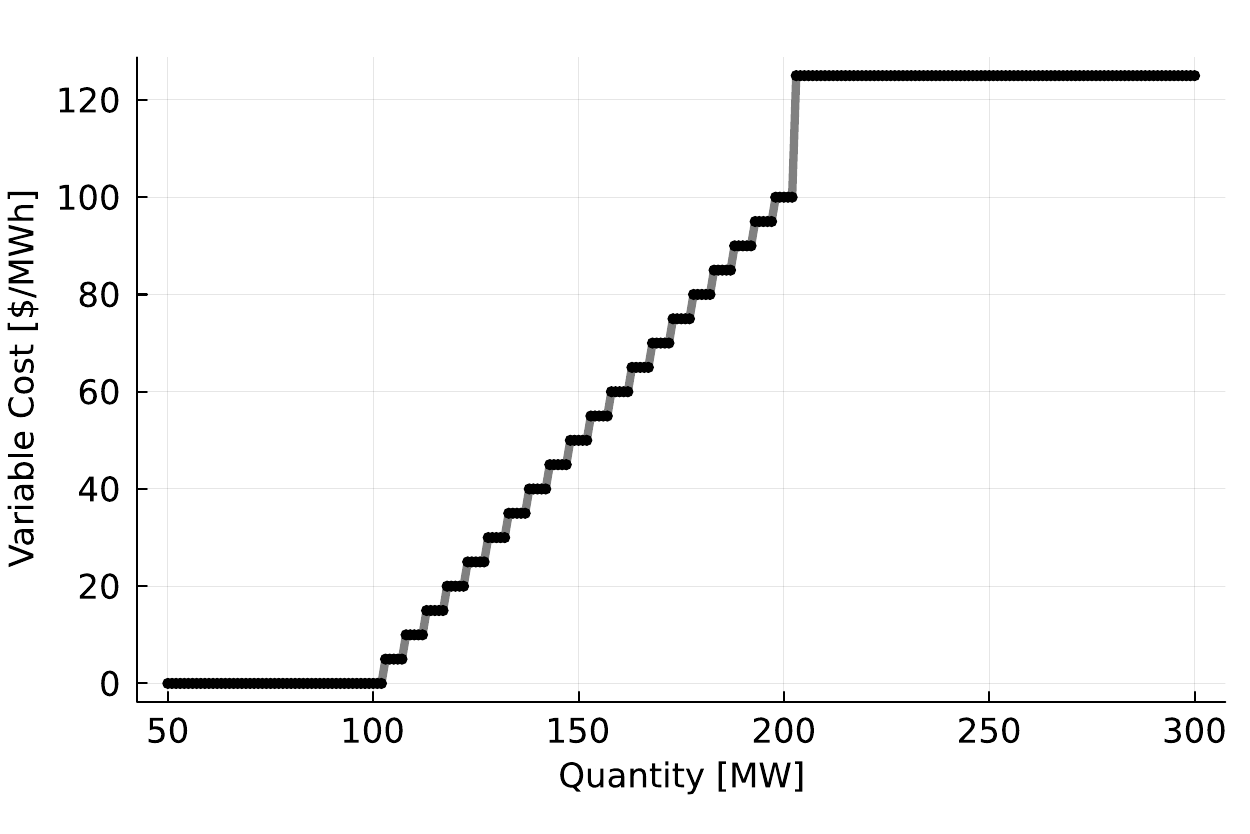}
        \caption{Merit order curve}
        \label{fig:example_MO}
    \end{subfigure}%
    ~ 
    \begin{subfigure}[b]{0.5\textwidth}
        \centering
        \includegraphics[width=\textwidth]{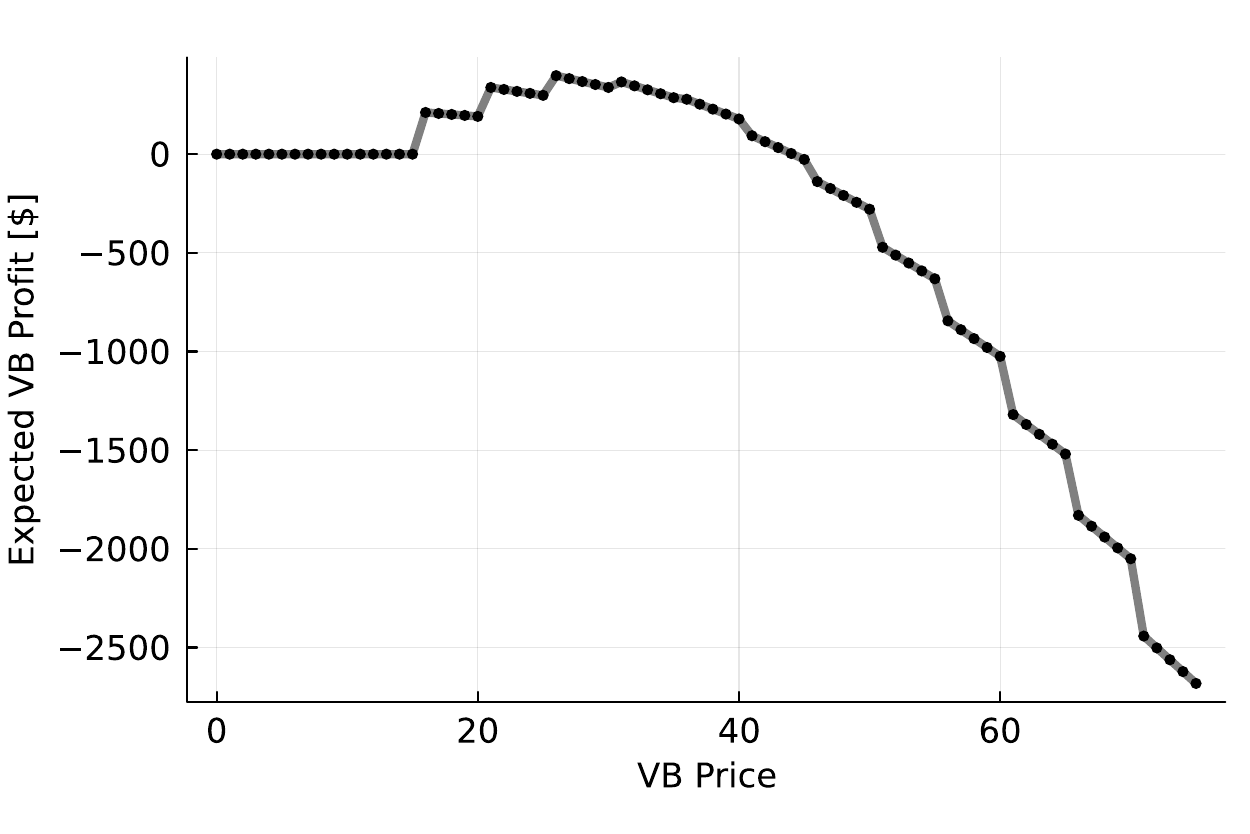}
        \caption{Virtual bidders profit}
        \label{fig:example_VBprofit}
    \end{subfigure}
    \begin{subfigure}[b]{0.5\textwidth}
        \centering
        \includegraphics[width=\textwidth]{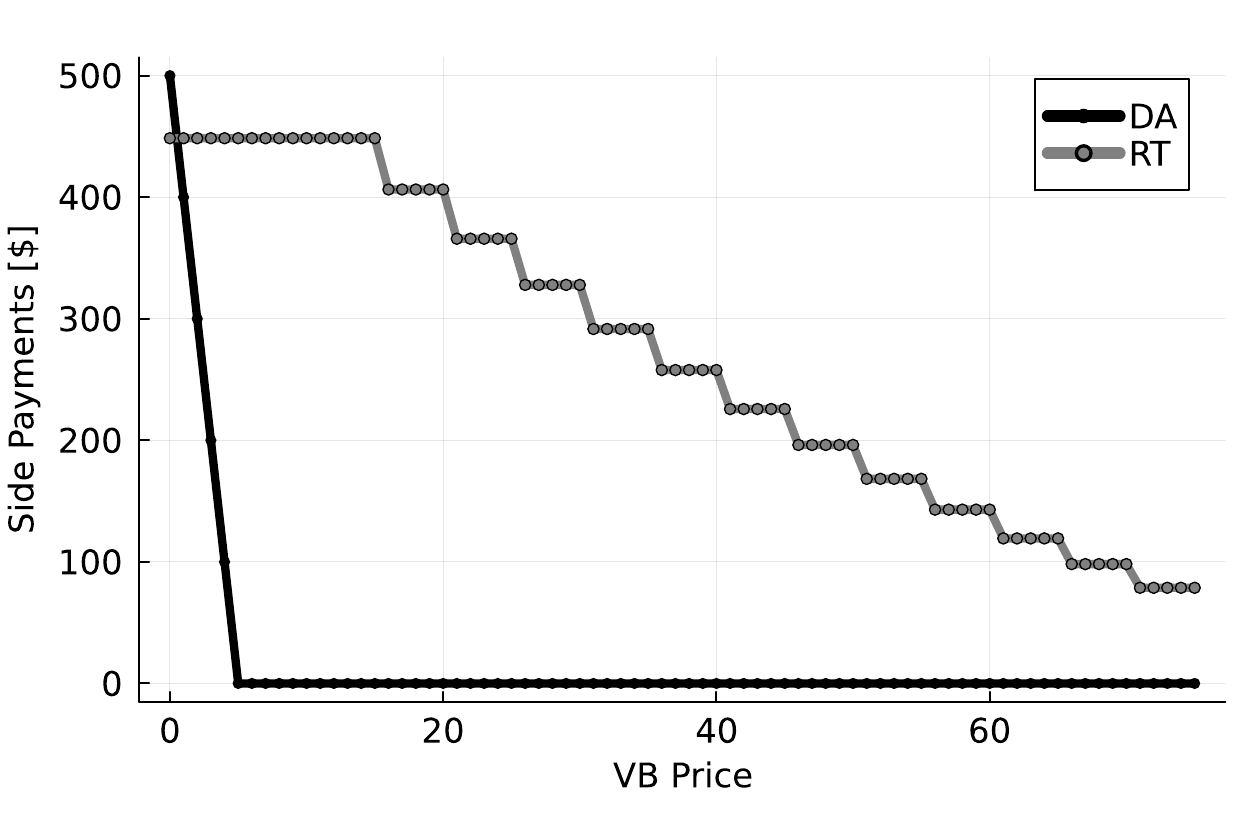}
        \caption{Side Payments}
        \label{fig:example_SP}
    \end{subfigure}%
    ~ 
    \begin{subfigure}[b]{0.5\textwidth}
        \centering
        \includegraphics[width=\textwidth]{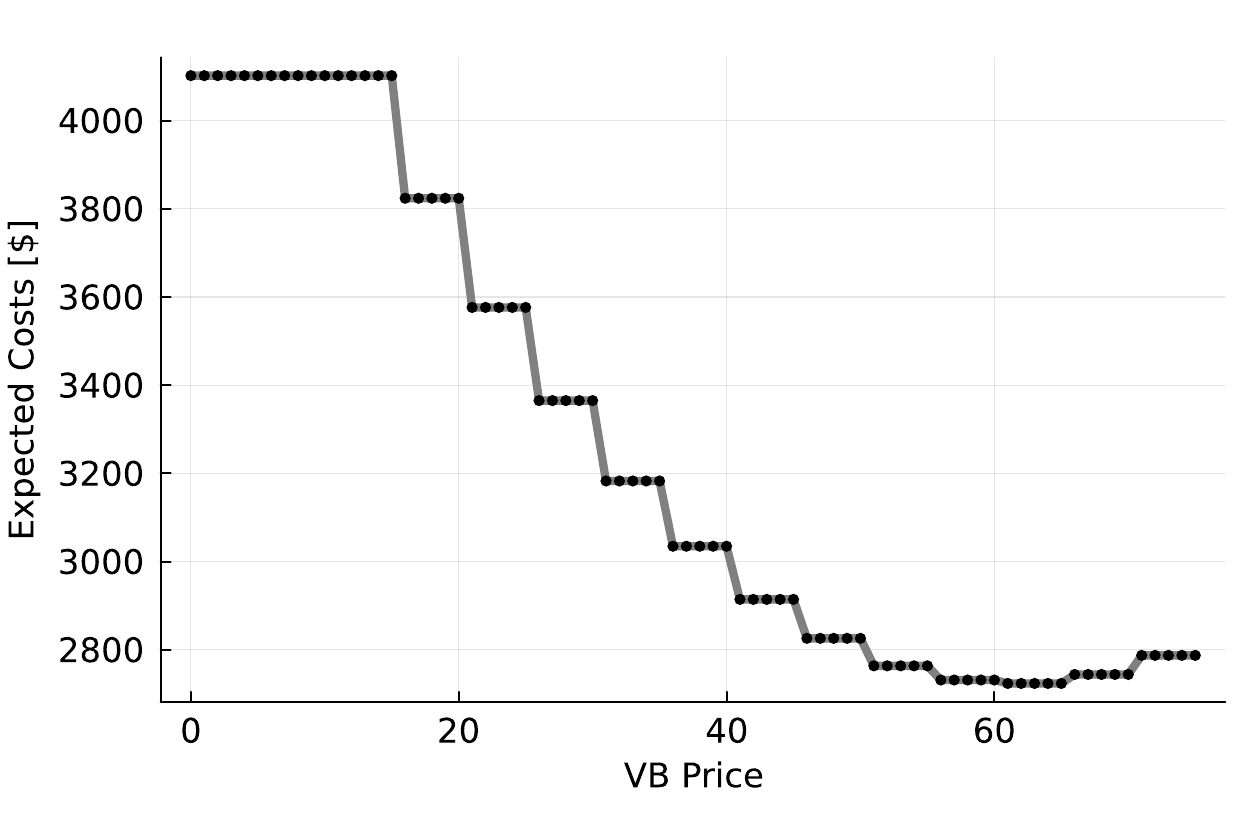}
        \caption{Total expected costs}
        \label{fig:example_Costs}
    \end{subfigure}
    \caption{Results of the discrete example.}
    \label{fig:example_results}
    \label{fig:}
\end{figure}

\paragraph*{A discrete example} 
Let us consider the following non-convex electricity market. There are 20 SSR and 20 FSR, with a capacity of 5MW each and a minimum production limit of 2 MW each. The fixed cost of SSR are \$50, while it is \$85 for the FSR. The variable cost of production is $i \times 5$\$/MWh for $i \in \{1, 20\}$ for the SSR and 125\$/MWh for the FSR.
There is also a baseload SSR unit of 100MW with variable cost 0 and fixed cost equal to \$500.
The merit order curve of this example is shown on Figure \ref{fig:example_MO}.
$d=100$MW and the demand shocks $\epsilon$ are uniformly distributed in the set $\{ 2i \vert i \in -50 \ldots 50 \}$. We consider the case of perfect virtual trading with a large volume of VB who choose a bid-price strategy $P$.
As in our model, the auctioneer clears the surplus-maximizing allocation and uses marginal pricing complemented by side payments, both for the day-ahead and real-time auctions.
This instance is fairly close to the settings of our model, except that the previous continuum of agents is replaced by a finite number of lumpy producers.

Figure \ref{fig:example_results} shows the market outcomes, as a function of the bid price strategy of the virtual traders. 
No virtual trader corresponds to $P^*=0$, as no virtual trader would get cleared at this price.
Perfect virtual trading corresponds to full arbitrage between day-ahead and real-time prices, leading to zero profit for the virtual traders. Here, this corresponds to the strategy $P^* \approx 44$ (cf. Figure \ref{fig:example_VBprofit}). This strategy entails dispatching 30MW of SSR (thus 6 SSR commitments) in day-ahead.
As in our model, this leads to cost reduction compared to no virtual trading (cf. Proposition \ref{prop:efficiency1}), as observed on Figure \ref{fig:example_Costs}, although it does not quite minimize the expected costs (cf. Proposition \ref{prop:efficiency2}). As observed on Figure \ref{fig:example_SP}, the day-ahead side payments are null and the real-time side payments decrease---almost cut by two---compared to no virtual trading (cf. Proposition \ref{prop:sidePayments}).

The outcome in presence of a transaction cost $t$ can also be deduced from the figures. Instead of zero profit, a transaction cost $t$ implies the virtual traders should make a per-MW profit of $t$, thus shifting their bidding strategy to the left on Figure \ref{fig:example_VBprofit}, increasing both costs and side payments compared to the perfect virtual trading case (cf. Figures \ref{fig:example_Costs} and \ref{fig:example_SP}), as expected from Proposition \ref{prop:transactionCost}.
These simulations also hint on what would happen in case of imperfect competition among virtual bidders. 
The price strategy maximizing virtual bidders' profit in Figure \ref{fig:example_VBprofit} ($P^*=26$) should be read as the outcome if virtual bidders behave as a monopsony instead of behaving \textit{\`a la Bertrand}. 
We observe that imperfect virtual trading---either monopsony virtual bidder or virtual trading with transaction cost $t$---improves both costs and side payments compared to no virtual bidders, although it does not perform as well as perfect virtual trading.

We ran some more simulations introducing heterogeneity in the portfolio of power plants. 
The results confirm the previous observations and are reported in the appendix \ref{sec:appendixMoreExamples}.

\section{Empirical strategy and data} \label{sec:empiricalStrat}

\paragraph*{Methodology.} In order to test the predictions of our model, we analyze the effect of a policy that impacted the transaction fees paid by some of the virtual traders in PJM Interconnection market. 
On November 1, 2020, a transaction fee was introduced on the UTC virtual bids \citep{MonitoringAnalytics2020,FERC2020}. 
INC and DEC virtual bids were already subject to transaction fees while UTC bids were not. This was judged ``unjust, unreasonable, and unduly preferential''\footnote{``We find that PJM's current uplift allocation rules are unjust, unreasonable, and unduly preferential because they do not allocate uplift to UTCs. Accordingly, we direct PJM to submit, in a compliance filing within 45 days from the date of this order, a replacement rate that revises PJM's current uplift allocation rules to allocate uplift to UTCs in a manner that treats a UTC, for uplift allocation purposes, as if the UTC were equivalent to a DEC at the sink point of the UTC. As a result, under the replacement rate, UTCs will be allocated both real-time uplift and day-ahead uplift.'' \citep{FERC2020}} by the FERC, the US energy regulator, who decided that the UTC bids should pay day-ahead and balancing operating reserve charges equivalent to a DEC at the UTC sink point \citep{FERC2020}.
The transaction fee applies to cleared quantities (as in our model of section \ref{sec:model}).

\begin{figure}
\centering
\includegraphics[width=0.9\textwidth]{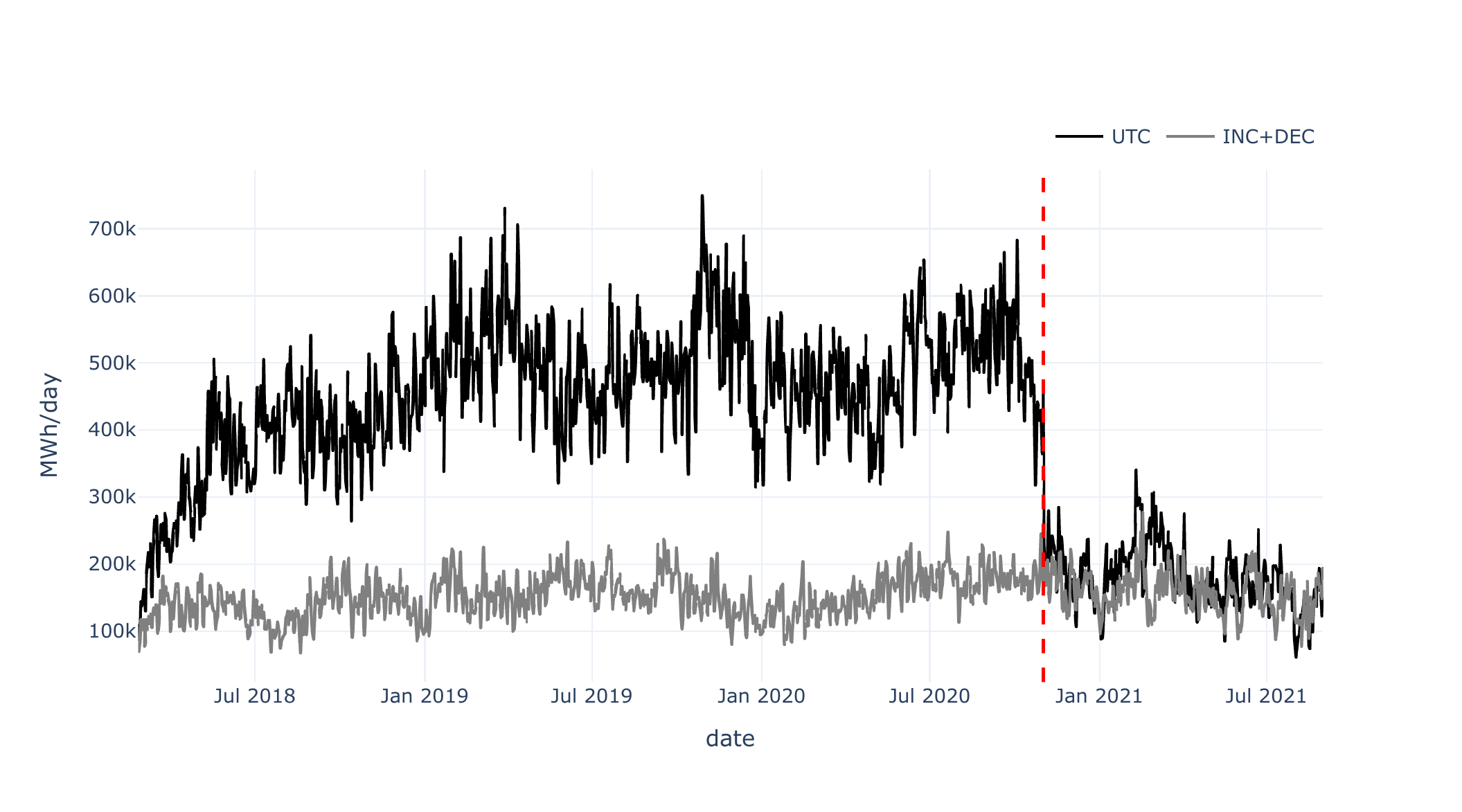}
\caption{PJM cleared virtual bids (INC, DEC and UTC). The vertical line, on November 1, 2020, corresponds to the introduction of transaction fees on UTC bids.}
\label{fig:PJM_virtual_bids_treatment}
\end{figure}

Following the implementation of this policy, the cleared volume of UTC virtual bids dropped significantly. This is shown on Figure \ref{fig:PJM_virtual_bids_treatment} where the vertical red line corresponds to the implementation of the new transaction fee. We see this transaction fee did not substantially affect the cleared volumes of other virtual bids (INC and DEC) but led to a significant decrease of cleared UTC bids (it is worth noting that, at the time, UTC represented the largest share of total financial trading volume, cf. Table \ref{tab:virtualBidsStat}).
Overall, the total hourly average cleared volume of virtual bids (INC, DEC and UTC) went from 25GW/h to 14GW/h following the introduction of the transaction fee.

\begin{table}
\centering
\setlength{\tabcolsep}{5pt}
\resizebox{\textwidth}{!}{
\begin{tabular}{lllllll}
\toprule
& \multicolumn{2}{@{}c@{}}{INC} & \multicolumn{2}{@{}c@{}}{DEC} & \multicolumn{2}{@{}c@{}}{UTC} \\
& Pre-Treat. & Post-Treat. & Pre-Treat. & Post-Treat. & Pre-Treat. & Post-Treat.  \\ 
\cmidrule(lr){2-3}\cmidrule(lr){4-5}\cmidrule(lr){6-7}
Mean &  64.3 & 55.9  & 87.2 & 102.2 & 456.6 &  180.3  \\ 
Median & 62.5 & 55.3 & 83.3 & 99.8 & 462.1 & 175.3  \\ 
Stand. Dev. &  16.5 &  15.7 &  28.6 & 27.8  &  101.9 & 46.5  \\ 
Max &  126.7 &  104.5 &  181.0 & 228.9 &  749.8 & 341.1  \\ 
\# observations &  978 & 298  &  978 & 298  &  978 & 298  \\ 
\bottomrule
\end{tabular}}
\caption{Daily cleared volume of virtual bids [GWh/day], pre- and post-treatment.} 
\label{tab:virtualBidsStat}
\end{table}

This drop in the cleared volume is aligned with economic expectations and with our model predictions: a transaction fee impacts the profit-maximizing problem of the arbitragers, who offset their bid to cover the fee, leading to a reduction of the cleared volume.
Our model predicts that the decline in financial trading volume should coincide with an increase in side payments (cf. Proposition \ref{prop:transactionCost}).
Did the inclusion of a transaction fee on November 1, 2020, and the associated sharp drop in virtual trading volume, indeed lead to an increase in side payments, as predicted by our model?

To analyze the effect of this policy, we leverage the public market data of PJM\footnote{except the Henry hub gas price time series which is sourced from EIA, the US Energy Information Administration.}. PJM publishes comprehensive market data including the detailed physical bids, the virtual bids, the load, the self-scheduling volume, data on side payments, data related to grid congestion and outages, etc.
We focus on the period from end of February 2018 to September 2021. This period is selected for it is stable in the market rules, aside from the policy change on UTC transaction fee.
On February 20, 2018, FERC issued an order which limit the number of bidding points at which virtual bids may be submitted by market participants (Docket No. ER18-88, \cite{FERC2018}). This changed the scope and nature of virtual bids, thus we start our dataset after February 20, 2018.
Besides, on September 1, 2021,  PJM implemented a new ``fast-start pricing'' rule on both the day-ahead and real-time markets.\footnote{Extensive discussions about ``fast-start'' pricing at PJM started in 2017 \citep{PJM2017}, following the trend initiated by the US Regulatory Commission which launched a consultation about price formation in power auctions in 2014 \citep{FERC2014}. This new pricing approach, sometimes referred to as ``extended LMP'', consists of taking the linear programming (LP) relaxation of the unit commitment problem. The main difference between fast-start pricing and extended LMP discussed in the scientific literature, is that fast-start pricing only consider the LP relaxation for a limited set of resources, namely those that are ``fast start''. Roughly said, fast-start pricing deviates from marginal pricing for it partly reflects the start-up costs, and other operational fixed costs, in the price signal, cf. the discussion in section \ref{sec:model} about including $s_F$ in the price signal.}
Up to September 2021, PJM relied on the classic marginal cost pricing rule, as in our model. 
Concretely, this new pricing rule deviates from marginal pricing for it allows the fixed non-convex costs of fast-start resources to be reflected in the price signal. 
This is a disruptive change in the pricing rule, thus we end our dataset on September 1, 2021.\footnote{Note that, the resources qualifying as ``fast-start'' for this pricing rule stand for a small share of all resources. Although this pricing policy is disruptive in theory, because of the limited number of resources which are effectively treated as ``fast-start'', the effect of fast-start pricing was expected to be moderate, according to simulations conducted by PJM in 2019 \citep{PJM2019faststartprice}.
Ex post, it appears the magnitude of its impact has been almost nonexistent in day-ahead and moderate in real-time, although some months exhibit several percentage point difference in prices between fast-start pricing and marginal pricing \citep{MonitoringAnalytics2022,MonitoringAnalytics2025faststart} (with the caveat that these \textit{simulated} marginal prices would have been different if they were actually used for settlement, since the bidding behaviour would likely have been different as well).}
Overall, our database includes 1,276 days, excluding 12 days between February 20, 2018, and September 1, 2021, because of missing data.


Our main variables of interest are the day-ahead and real-time side payments paid by PJM, i.e. the counterpart of $SP_{DA}$ and $SP_{RT}$ in our model of section \ref{sec:model}. 
As it should be clear by now, side payments are used as a measure of distance to equilibrium: they are the discriminatory payment made to market participants to compensate for their losses at the market price. PJM provides data of the daily amount of side payments (these payments indeed do not have a finer, hourly, granularity).
Table \ref{tab:upliftsData} provides descriptive statistics of day-ahead and real-time daily side payments. Let us highlight two main observations. First, most of the side payments due to non-convexities are paid in real-time, as already observed on Figure \ref{fig:PJMuplifts}: average real-time side payments are about four times larger in real-time than in day-ahead. This should mostly be read as side payments related to the activation of fast-start resources in real-time, as in our model.
Second, 22\% of the days exhibit zero day-ahead side payments, which means about 6 days per month. Figure \ref{fig:PJM_zero_upliftDA} shows the number of days per month with zero day-ahead side payments.

\begin{table}[t]
\centering
\setlength{\tabcolsep}{5pt}
\resizebox{\textwidth}{!}{
\begin{tabular}{lllllll}
\toprule
& \multicolumn{3}{@{}c@{}}{Day-ahead} & \multicolumn{3}{@{}c@{}}{Real-time} \\
& All Days & Pre-Treat. & Post-Treat. & All Days & Pre-Treat. & Post-Treat. \\ 
\cmidrule(lr){2-4}\cmidrule(lr){5-7}
Mean & 47,620 & 51,925 & 33,494 & 215,705 & 167,351 & 374,400 \\ 
Median & 7582 & 7471 & 7595 & 109,672 & 90,588 & 220,052  \\ 
Stand. Dev. & 86,722 & 92,403 & 62,746 & 296,569 & 210,532 & 445,874 \\ 
Max & 958,693 & 958,693 & 646,474 & 2,604,937 & 1,789,976 & 2,604,937 \\ 
\% of 0 & 22\% & 24\% & 15.7\% & 0.08\% & 0.1\% & 0\% \\ 
\# observations & 1276 & 978 & 298 & 1276 & 978 & 298 \\ 
\bottomrule
\end{tabular}}
\caption{Day-ahead and real-time side payments [\$/day], pre- and post-treatment.} 
\label{tab:upliftsData}
\end{table}

\begin{figure}
\centering
\includegraphics[width=0.9\textwidth]{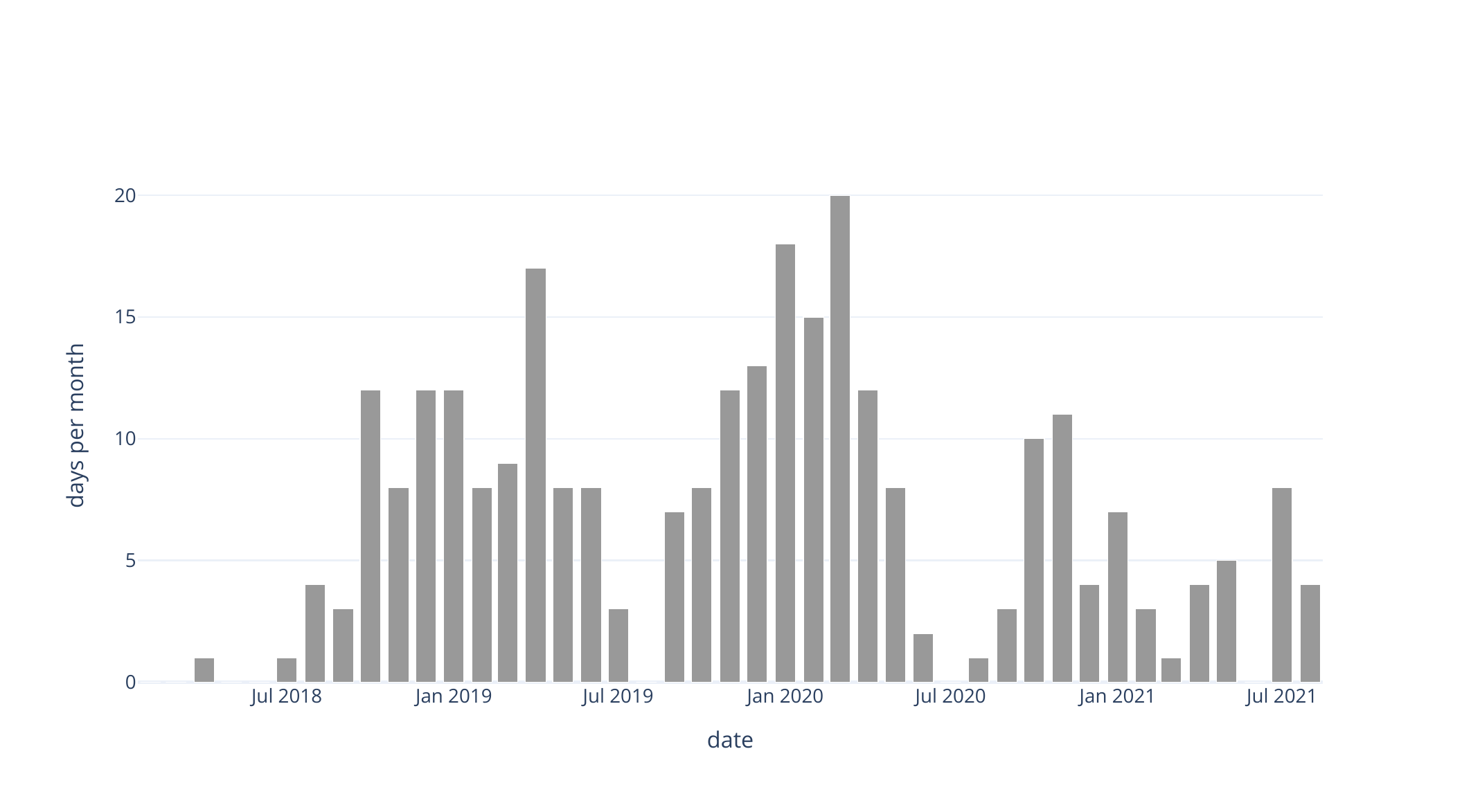}
\caption{Days per month with zero side payments in day-ahead in PJM market.}
\label{fig:PJM_zero_upliftDA}
\end{figure}

To estimate the impact of the policy on the side payments, we estimate two different models: one for day-ahead side payments and one for real-time side payments.

\paragraph*{Real-time model.}
For the real-time side payments, we estimate the following ordinary least squares (OLS) regression:
\begin{equation} \label{eq:OLS_RT}
SP_{RT,t} =  \alpha + \beta_1 \text{Treatment}_t + \beta_2 s_{F,t} + \beta_3 loadShock_t + \eta \textbf{X}_t + u_t
\end{equation}
where $SP_{RT,t}$ corresponds to the real-time side payments for each day $t$, Treatment$_t$ is the indicator variable representing the introduction of a transaction fee on virtual bids on November 1, 2020, $s_{F,t}$ are the fixed costs of FSR, $loadShock_t$ represents the load shocks and $\textbf{X}_t$ are the control variables.
The main parameter of interest is $\beta_1$ which measures the treatment effect, thus by how much did the transaction fee increased the amount of real-time side payments. From our model we expect $\beta_1 > 0$: a transaction fee increases the amount of side payments needed to sustain an equilibrium.

In addition to the impact of financial trading, our model also points to a direct effect of both FSR fixed costs (i.e. $s_F$ in our model) and load shocks (i.e. $\epsilon \in [-\overline{D}, \overline{D}]$ in our model) on side payments. These effects are captured by the parameters $\beta_2$, which is the effect of the fixed costs of FSR, and $\beta_3$, which is the effect of the load shock between day-ahead and real-time.
We briefly comment on the FSR data and load shock data before turning to the control variables.

\textsc{FSR bids data---}PJM publishes comprehensive bid data of dispatchable resources, including, for each resource, the hourly offered curve, start-up and no-load costs, maximum and minimum production limit, start time and minimum up and down time.
We use these bid data to construct a time series of the fast-start resource fixed costs (the counterpart of parameter $s_F$ in our model of section \ref{sec:model}).
In PJM, eligible fast-start resources should have a total time to start and minimum run time less than or equal to one hour \citep{PJM2025b}.
Using these criteria and focusing on non-convex resources (with positive fixed costs), leads to about 180 units that are ``fast-start''.\footnote{The public figures available speak about 200 to 300 resources \citep{MonitoringAnalytics2025faststart}.}
These resources have a capacity mostly between 0 and 100MW, with a median at 43MW (cf. the distribution in Figure \ref{fig:FSRdata} in appendix).
Each one of these resources has a fixed cost which corresponds to the sum of the start-up cost and the no-load cost, and which is mostly between 0 and 10,000\$ (cf. the distribution in Figure \ref{fig:FSRdata} in appendix).
We aggregate the fixed costs of these resources by taking a weighted average, where the plant capacity is used as weights: the weighted average FSR fixed cost is around 4000\$.
Figure \ref{fig:PJMgasPriceAndFixedCosts} shows the output time series as well as the Henry hub gas price, which explains some but not all the FSR fixed costs variations. 
This time series corresponds to $s_{F,t}$ in equation \eqref{eq:OLS_RT}. Parameter $\beta_2$ measures by how much the real-time side payments increase when the fixed costs of FSR increases. From our model (cf. Proposition \ref{prop:sidePayments2}), we expect $\beta_2>0$: more fixed costs, typically refunded by side payments, naturally increases the amount of side payments.

\begin{figure}[t]
\centering
\includegraphics[width=0.9\textwidth]{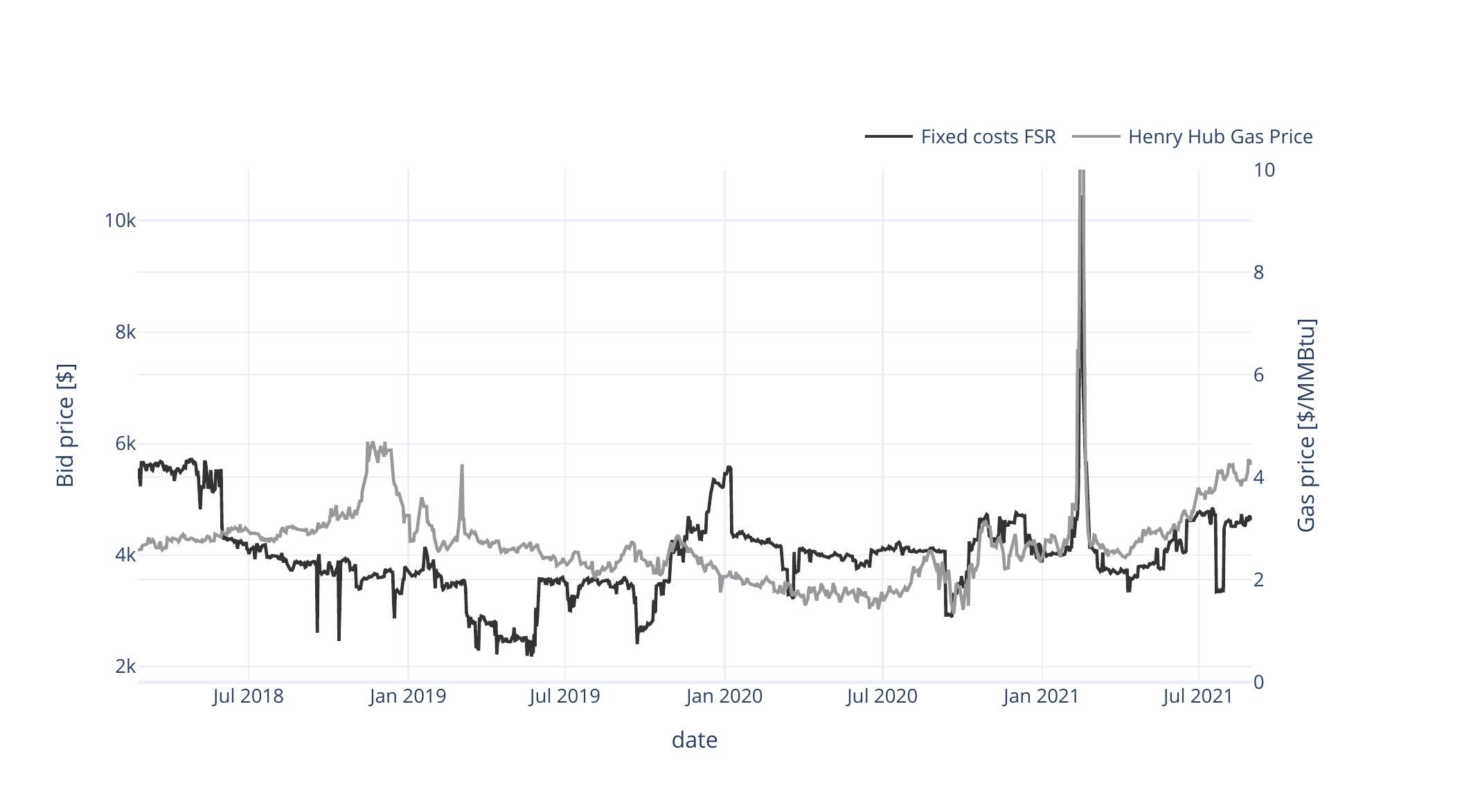}
\caption{Henry Hub gas price and PJM fixed costs of fast-start resources.}
\label{fig:PJMgasPriceAndFixedCosts}
\end{figure}

\textsc{Load shocks---}We compute load shocks (or load forecast error) as the hourly square difference between DA and RT loads.
This time series corresponds to $loadShock_t$ in equation \eqref{eq:OLS_RT}. 
We interpret it as the counterpart to the load shock $\epsilon \in [-\overline{D}, \overline{D}]$ in our model of section \ref{sec:model}.
Figure \ref{fig:PJM_load_val_std_error} shows the monthly average of the load shocks and compare it with the daily load and daily load standard deviation. 
The average daily load error is 12GW, thus 500MW/h, meaning 0.5\% of the hourly load.
We observe that load shocks tend to be higher during the summer months, when both the load and load variation are higher.
We expect $\beta_3>0$: larger shocks implies more FSR activations thus larger side payments.

\begin{figure}[t]
\centering
\includegraphics[width=0.9\textwidth]{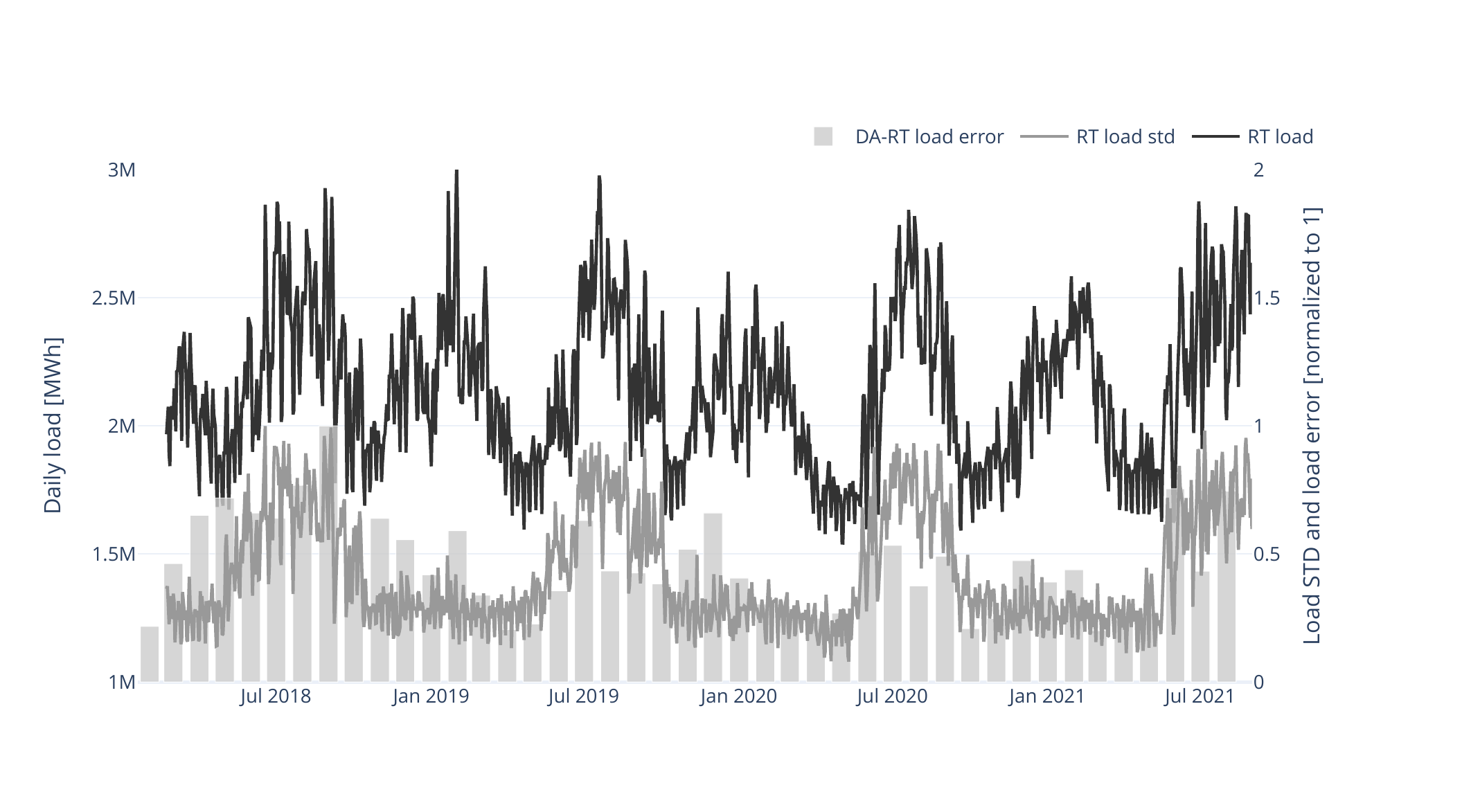}
\caption{Real-time daily load, daily load standard deviation and daily load error.}
\label{fig:PJM_load_val_std_error}
\end{figure}

\textsc{Other market controls---}Besides the drivers of side payments captured in the model of section \ref{sec:model}, the actual PJM market involves additional complexities that can affect the magnitude of side payments, such as the multi-period nature of the market or the presence of transmission grid constraints.
PJM provides a list of what they consider to be the main drivers of side payments which, besides generator market offers and load error between day-ahead and real-time, also includes (i) the load, (ii) the grid congestions, (iii) generation availability, (iv) the amount of self-scheduling as well as (v) emergency procedure events \citep{PJM2026drivers}.
These data are included as control variables $\textbf{X}$ in equation \eqref{eq:OLS_RT}.
We briefly describe these data below (more in depth description of the data is provided in Appendix \ref{sec:appendixPJMdata}).
(i) As far as the load is concerned, higher load is associated with more commitment of resources, which can give rises to more side payments. High load variation can also put the system under stress and lead to side payments. PJM load has a clear seasonality, with both a summer and a winter peak (cf. Figure \ref{fig:PJM_load_val_std_error}). Yet, the daily load profile exhibits significantly more variations during the summer peak rather than the winter peak.
(ii) Grid congestions also affect the prevalence of non-convexity and the magnitude of side payments: congestions tend to make the market more fragmented, which exacerbates the non-convexities of the market.
(iii) Unavailability of generation resources can also result into activations of costlier fast-start resources, at the right of the merit order curve, associated with more side payments.
(iv) The volume of self-scheduling impact the amount of flexibility available in the system, which also affect side payments. (v) Finally, emergency procedure events can indicate a high level of stress in the system.
Altogether, these elements can influence the side payments. For example, inspecting the data, the relatively low side payments observed from December 2019 to May 2020 on Figure \ref{fig:PJMuplifts} seem to be driven by favourable system conditions: a relatively mild winter (low load with low variability), fewer congestions, few emergency events, relatively low load error between DA and RT.

The control variables $\textbf{X}$ in the model of equation \eqref{eq:OLS_RT} are split in three categories which are \textit{market controls} (Henry hub gas price, day-ahead price and standard deviation, real-time load and standard deviation), \textit{date controls} (winter-summer and weekend-weekdays indicators) and \textit{system stress controls} (DA and RT self-scheduling, DA congestion, generation availability---both active units, generation capacity---and emergency events).

\paragraph*{Day-ahead model.} 
As noted earlier in this section, day-ahead side payments data exhibit two main features relative to real-time side payments (Table \ref{tab:upliftsData}): (i) they are substantially smaller in magnitude and (ii) they are equal to zero about $20 \%$ of the time.
Note that this share of days with zero day-ahead side payments is consistent with our model's prediction that virtual trading participation may eventually eliminate day-ahead side payments.
Instead of an OLS model, we estimate the following binary response logit model:
\begin{equation}
\mathbb{P} (SP_{DA, t} > 0 \vert x) = G (\alpha + \beta \text{Treatment}_t +  \eta \textbf{X}_t)
\end{equation} 
where $G$ is the logistic function ($G(z) = e^z/(1+e^z)$), $SP_{DA,t}$ corresponds to the day-ahead side payments at date $t$ and $\mathbb{P} (SP_{DA, t} > 0 \vert x)$ the probability of having positive side payments given the full set of explanatory variables $x$ (here, the Treatment$_t$ and the controls $\textbf{X}_t$). Treatment$_t$ is, as previously, the indicator variable representing the introduction of the transaction fee and $\textbf{X}_t$ are the control variables.
The main parameter of interest is $\beta$ which measures the treatment effect. 
We expect $\beta > 0$: there are higher chances of non-zero side payments when a transaction fee is introduced.

We use similar types of controls as for the real-time model, except that we obviously remove variables related to the real-time (we use DA load and not RT load, DA self-scheduling and not RT self-scheduling, etc.).
Similarly to the real-time model, the control variables $\textbf{X}$ are split in three categories which are market controls (Henry hub gas price, day-ahead load and standard deviation), date controls (winter-summer and weekend-weekdays indicators) and system stress controls (DA self-scheduling and generation availability---both active units, generation capacity).

\section{Main results} \label{sec:mainResults}
Tables \ref{tab:RTmainResults} and \ref{tab:DAmainResults} report the main results of both the real-time and day-ahead models. 
In both tables, column (I) reports the results of the full model, including all controls, for the period starting on February 20, 2018, to September 1, 2021. Columns (II) to (IV) correspond to models in which some controls are removed: columns (II) removes system stress controls, column (III) further removes date controls and column (IV) removes all controls. Columns (V) and (VI) are robustness tests on the time window. On February 20, 2018, the definition of UTC products changed. As we observe on Figure \ref{fig:PJM_virtual_bids_treatment}, this is followed by a period of roughly two months of steep increase of the UTC volume before it stabilizes, which likely corresponds to the time it took for traders to adapt their trading strategy. Column (V) filters these two first months (thus starting the model on April 20, 2018). On the other end of our time window, on September 1, 2021, PJM implemented a new pricing rule. Column (VI) adds four more months to the regression (thus terminating on December 31, 2021 instead of August 31, 2021).

As far as the real-time model is concerned, let us stress three main observations.
First, we find an average treatment effect of 135k\$. Thus our model estimates that the introduction of the transaction fee on virtual bids increased the daily real-time side payments by 135k\$ (with a 95\% confidence interval ranging from 103 to 168k\$). This represents an economically significant increase as, from Table \ref{tab:upliftsData}, the pre-treatment magnitude of the daily side payments was 167k\$. Thus the transaction fee roughly increased by 80\% the real-time side payments.
Second, from columns (I) to (VI), these findings are statistically significant and they are robust to sensitivities on both the control variables and on the time window.
Note that the slightly smaller effect in column (VI) is aligned with expectations since the implementation of the fast-start pricing policy on September 1st, 2021, is expected to have mitigated the side payments.
Third, both the FSR fixed costs and the load shocks positively affect the side payments. All else equal, higher FSR fixed costs---typically recovered through side payments---naturally lead to higher side payments.\footnote{The lower significance of FSR fixed cost compared to the other variables can be explained by the way we measure them. They are the average of the FSR fixed cost, whereas we expect only the least costly FSR units to have an impact. This is an artefact of our model where the fixed cost of FSR are linear, whereas it is of course not the case in reality.}
Similarly, higher load shocks, which are expected to trigger FSR activations, also lead to higher side payments.

\begin{table}[t]
\centering
\setlength{\tabcolsep}{4pt}
\begin{tabular}{lllllll}
  \toprule
 & (I) & (II) & (III) & (IV) & (V) & (VI) \\ 
  \hline
Treatment Effect & 135.09$^{***}$ & 159.79$^{***}$ & 160.67$^{***}$ & 207.78$^{***}$ & 127.44$^{***}$ & 108.75$^{***}$ \\ 
 (in K dollars)   & (16.55) & (16.24) & (16.44) & (17.93) & (17.13) & (16.77) \\ 
        &   &   &   &   &   &   \\ 
FSR fixed cost & 33.97$^{**}$ & 11.4 & 6.51 & 31.59$^{***}$ & 42.38$^{**}$ & 13.3 \\ 
      & (12.64) & (8.63) & (8.71) & (9.57) & (13.29) & (10.49) \\ 
          &   &   &   &   &   &   \\ 
DA-RT load error & 0.0003$^{***}$ & 0.0003$^{***}$ & 0.0003$^{***}$ & 0.0005$^{***}$ & 0.0003$^{***}$ & 0.0003$^{***}$ \\ 
       & (3.7e-05) & (3.7e-05) & (3.7e-05) & (4e-05) & (3.9e-05) & (3.7e-05) \\ 
                &   &   &   &   &   &   \\ 
  System stress controls & X &   &   &   & X & X \\ 
  Date controls & X & X &   &   & X & X \\ 
  Market controls  & X & X & X &   & X & X \\ 
              &   &   &   &   &   &   \\ 
  Sample size & 1276 & 1276 & 1276 & 1276 & 1216 & 1395 \\ 
  R squared & 0.46 & 0.42 & 0.41 & 0.21 & 0.46 & 0.41 \\ 
   \bottomrule
\end{tabular}
\caption{Real-time side payments results (OLS model).} 
\label{tab:RTmainResults}
\end{table}

Regarding the day-ahead side payments, our model estimates a logit coefficient of 0.7. Thus, as expected, the introduction of the transaction fee increased the probability of having non-zero side payments in day-ahead.
As for the real-time model, we find this estimate is statistically significant and robust to our sensitivities on both the control variables and on the time window.
A logit coefficient of 0.7 means the treatment double the odds ($e^{0.695} \approx 2.0$) of having non-zero side payments in day-ahead.
Given an average probability of non-zero day-ahead side payments before treatment was 76\%, the logit coefficient means the treatment increased on average by $+10$\% the chances of having non-zero side payments in day-ahead.\footnote{$p_1 = 0.76 = \frac{e^z}{1+e^z}$ from which we compute $e^z = 3.16$. Then $p_2 = \frac{e^z e^{\beta}}{1+e^z e^{\beta}}$ with $\beta = 0.695$ from which we compute $p_2 = 0.86$ thus $p_2 - p_1 = 0.1$.}

\begin{table}[t]
\centering
\setlength{\tabcolsep}{4pt}
\begin{tabular}{lllllll}
  \toprule
 & (I) & (II) & (III) & (IV) & (V) & (VI) \\ 
  \hline
Logit Coefficient & 0.6954$^{***}$ & 0.7156$^{***}$ & 0.5416$^{**}$ & 0.5189$^{**}$ & 0.7840$^{***}$ & 0.7232$^{***}$ \\ 
    & (0.1996) & (0.1923) & (0.1870) & (0.1757) & (0.2005) & (0.1880) \\ 
     &   &   &   &   &   &   \\ 
  System stress controls & X &   &   &   & X & X \\ 
  Date controls & X & X &   &   & X & X \\ 
  Market controls & X & X & X &   & X & X \\ 
      &   &   &   &   &   &   \\ 
  Sample size & 1276 & 1276 & 1276 & 1276 & 1216 & 1395 \\ 
   \bottomrule
\end{tabular}
\caption{Day-Ahead side payments results (logit model).} 
\label{tab:DAmainResults}
\end{table}

\section{Discussion and conclusions} \label{sec:ccl}

The effect of financial trading in a non-convex electricity market has been subject to debate among stakeholders and in the scientific literature.
In a note from 2015, PJM argued that virtual bids could in principle give rise to side payments by distorting the commitments made in day-ahead, leading to under or over commitment, associated with side payments \citep{PJM2015}. PJM's argument was not based on an equilibrium analysis though, but on stylized examples that assume exogenously some bidding behaviour for virtual traders (while, as we argue in our model, what matters is what happens \textit{at the equilibrium} and \textit{in expectation}). From this analysis, PJM recommended charging the virtual bids a transaction fee to compensate for the side payments they may cause.

In 2016, Hogan argued against the idea of allocating uplift charges to virtual traders as this increase in transaction cost would deter participation and have material consequences on the efficiency of the market \citep{hogan2016virtual}.

Later in 2020, the FERC order that led to the implementation of the transaction fee on UTCs (which we analyzed in section \ref{sec:empiricalStrat}) gave rise to contrasted viewpoints on the benefits of virtual trading. 
The utilities were in favor of this policy: ``PJM Utilities Coalition conclude that because UTCs are transacting in the markets, they should receive their respective share of the costs that result from market operations.'' \citep{FERC2020}
In contrast, some trading firms active in PJM market argued that ``because UTCs promote price convergence and help with price formation, allocation of uplift to UTCs disincentivizes UTCs' efficiency enhancing characteristics on high priced days when there is risk of high uplift.'' \citep{FERC2020}

Our model and empirical analysis contribute to shed light on this debate.
Our article argues that the introduction of financial trading in an electricity market induces a \textit{smoothing effect}. 
Financial traders are arbitragers that bid close to the market price, resulting in the addition of a large volume of convex bids close to the market margin.
Our model shows that, by internalizing the fixed costs in their bids and anticipating the real-time system conditions, the financial traders mitigate the side payments needed to sustain an equilibrium.
In day-ahead, virtual traders set the prices and as a consequence, fixed costs are reflected in the uniform price signal instead of being refunded by side payments. This eliminates day-ahead side payments (in the scope of our model).
In real-time, higher price convergence and better commitment decisions in day-ahead lead to less lumpy activations of fast-start resources in real-time usually associated with side payments.  
Our empirical analysis confirm the predictions of our model. According to our analysis, the introduction of a transaction fee in PJM market on the financial traders, on November 1, 2020, led to a substantial drop of financial trading volume which in turn led to an increase of side payments. In day-ahead, the likelihood of non-zero side payments increased by 10\% following the introduction of the transaction fee. In real-time, the amount of side payments increased by 80\%.

FERC justified the introduction of transaction fees on virtual UTC bids on the ground that other virtual bids (INC and DEC) were already subject to similar fees, which was judged ``unjust, unreasonable, and unduly preferential''. A natural alternative proposition would have been to remove transaction fees on INC and DEC, instead of introducing a fee on UTC. From our results, virtual trading reduces side payments and a transaction fee is counter-productive. As stressed by \cite{hogan2016virtual}, the inexistence of an equilibrium and associated side payments have their source in non-convexities, not in financial trading.

\bibliographystyle{plainnat}
\bibliography{references.bib}

\newpage
\appendix

\renewcommand{\thefigure}{\Alph{section}.\arabic{figure}}
\renewcommand{\thetable}{\Alph{section}.\arabic{table}}
\renewcommand{\theequation}{\Alph{section}.\arabic{equation}}
\setcounter{equation}{0}
\setcounter{figure}{0}
\setcounter{table}{0}

\section{Proofs of the propositions}
\begin{proof}[Proof of Proposition \ref{prop:comparativeStaticsK}]
We prove the four points. Let us first define for convenience $A = c_F^2 + (2 c_S \overline{D})^2 + 2c_F c_S \overline{D} + 4 c_S s_S \overline{D}$, the expression inside the square root in \eqref{eq:k*}.
\begin{enumerate}
\item $\partial k^*/\partial s_S < 0$ is straightforward from \eqref{eq:k*}. 
\item $\partial k^*/\partial c_F > 0$ can be seen from equation \eqref{eq:2orderk*}: this is a convex second-order polynomial and a higher $c_F$ shifts the function upward ($\forall k \in [0, \overline{D}]$) thus the smaller root ($k^*$) moves to the right. Alternatively, using \eqref{eq:k*}, one can compute:
\begin{align*}
\frac{\partial k^*}{\partial c_F} & = \frac{1}{c_S} \left( 1 - \frac{c_F + c_S \overline{D}}{\sqrt{A}}  \right) > 0
\end{align*}
Indeed, $\sqrt{A} = \sqrt{(c_F + c_S \overline{D})^2 + 3 (c_S \overline{D})^2 + 4 c_S s_S \overline{D}} > c_F + c_S \overline{D}$
\item $\partial k^*/\partial c_S < 0$ can be seen from equation \eqref{eq:2orderk*}: clearly, $\forall k \in [0, \overline{D}]$, a higher $c_S$ shifts the function downward thus the smallest root ($k^*$) moves to the left.
Alternatively, using \eqref{eq:k*}, one can compute:
\begin{align*}
\frac{\partial k^*}{\partial c_S}  & =  \frac{-c_F}{c_S^2} + \frac{\sqrt{A}}{c_S^2} - \frac{4 c_S \overline{D}^2 + + c_F \overline{D} + 2 s_S \overline{D}}{c_S \sqrt{A}}  < 0 \\
& \Leftrightarrow -c_F \sqrt{A} + c_F^2 + c_F c_S \overline{D} + 2 c_S s_S \overline{D} < 0 \\
& \Leftrightarrow \sqrt{c_F^2 + (2 c_S \overline{D})^2 + 2c_F c_S \overline{D} + 4 c_S s_S \overline{D}} > c_F + c_S \overline{D} + \frac{2 c_S s_S \overline{D}}{c_F} \\
& \Leftrightarrow \frac{s_S^2}{c_F^2} + \frac{s_S}{c_F} < \frac{3}{4} \\
& \Leftrightarrow c_F > 2 s_S 
\end{align*}
which is nothing more than the condition for $k^* > 0$.
\item $\partial k^*/\partial s_F = 0$ is straightforward from \eqref{eq:k*}. 
\end{enumerate}
\end{proof}

\begin{proof}[Proof of Proposition \ref{prop:sidePayments}]
The day-ahead side payments drop from $s_S d$ to zero. The real-time side payments are reduced by $\frac{s_F k}{2} \left(1 - \frac{k}{2 \overline{D}} \right)$ (which is $>0$ for any $k<\overline{D}$).
\end{proof}

\begin{proof}[Proof of Proposition \ref{prop:sidePayments2}]
$k^*$ does not depend on $s_F$, thus $\mathbb{E}_{\epsilon} (SP_{RT,\epsilon})$ is a linear function in $s_F$. We then compute the derivative:
\begin{align*}
\frac{\partial \mathbb{E}_{\epsilon} (SP_{RT,\epsilon})}{\partial s_F} & = \frac{1}{2 \overline{D}} \int_k^{\overline{D}} (\epsilon-k) d\epsilon > 0
\end{align*}
\end{proof}

\begin{proof}[Proof of Proposition \ref{prop:efficiency1}]
We prove that
\begin{equation} 
s_S - \frac{k^2 c_S}{6 \overline{D}} + \frac{k(c_F+s_F)}{4 \overline{D}} + \frac{k c_S}{4} - \frac{(c_F + s_F)}{2} <0 \label{eq:costReduct}
\end{equation}
Injecting \eqref{eq:2orderk*} into \eqref{eq:costReduct} to replace $k^2$, and rearranging the terms, the left-hand-side leads to
$$L(k) = k \left( \frac{-c_F}{12 \overline{D}} + \frac{s_F}{4 \overline{D}} - \frac{5c_S}{12} \right) + \frac{s_S}{3} - \frac{s_F}{2} - \frac{c_F}{6} $$
$L(k)$ is a linear function in $k$, with $k^* \in [0, \overline{D}]$. A sufficient condition for $L(k^*)<0$ is thus $L(0)<0$ and $L(\overline{D})<0$. Using the model assumptions $c_S \overline{D} + s_S < c_F$ and $s_F > s_S$, we conclude:
\begin{align*}
L(0) & = \frac{s_S}{3} - \frac{s_F}{2} - \frac{c_F}{6}  < 0 \\
L(\overline{D}) &= - \frac{c_F}{4} -  \frac{s_F}{4}-  \frac{5 c_S \overline{D}}{12} +  \frac{s_S}{3} <0
\end{align*}
which proves our statement. (Alternatively, one can prove Proposition \ref{prop:efficiency1} by analysing the shape, the first and second derivatives and the optimum $k^{**}$, of the cost curve $Cost(k)$, as done in the proof of Proposition \ref{prop:efficiency2}.)
\end{proof}

\begin{proof}[Proof of Proposition \ref{prop:efficiency2}]
The first best optimal commitment in day-ahead can be computed by optimizing the cost function $Cost(k)$ in equation \eqref{eq:cost_withVB} with respect to $k$.
Computing the first and second derivatives lead to
\begin{align}
\frac{\partial Cost}{\partial k} & = s_S - \frac{c_S k^2}{2 \overline{D}} + k \left( \frac{c_F+s_F}{2 \overline{D}} + \frac{c_S}{2} \right) - \frac{c_F + s_F}{2}  \label{eq:optimalCostDerivative} \\
\frac{\partial^2 Cost}{\partial k^2} & = -\frac{c_S k}{\overline{D}} + \frac{c_F + s_F}{2\overline{D}} + \frac{c_S}{2}
\end{align}
We can check that $\frac{\partial Cost}{\partial k} \vert_0 < 0$ (as $c_F + s_F > 2 s_S$) and $\frac{\partial Cost}{\partial k} \vert_{\overline{D}} > 0$ ; as well as $\frac{\partial^2 Cost}{\partial k^2} >0$ for $k \in [0, \overline{D}]$. Thus $Cost(k)$ is a polynomial cubic function in $k$, which is convex for $k \in [0, \overline{D}]$, decreasing at $k=0$ and increasing at $k=\overline{D}$. This is illustrated on Figure \ref{fig:costCurvek}.
The optimal $k^{**}$ satisfies $\frac{\partial Cost}{\partial k} = 0$, thus 
\begin{equation} \label{eq:optimalK}
c_S k^2 + k(-c_S \overline{D} - c_F - s_F) + (c_F + s_F - 2 s_S)\overline{D} = 0
\end{equation}
which differs from \eqref{eq:2orderk*}.
Clearly, $k^{**} > k^*$ as the second-order function defined by \eqref{eq:optimalK} is above the second-order function defined by \eqref{eq:2orderk*} ($k^2$ coefficient is bigger and $\overline{D} s_F \geq k s_F$ for $k \in [0, \overline{D}]$). This implies the smallest root of \eqref{eq:2orderk*} is smaller than \eqref{eq:optimalK}.
\end{proof}

\begin{proof}[Proof of Proposition \ref{prop:transactionCost}]
Let us prove the inequalities stated:
\begin{itemize}
\item $k^t \leq k^*$ is immediate from equation \eqref{eq:kcondition_with_t}: the second order polynomial defined in \eqref{eq:kcondition_with_t} is below from the one defined in \eqref{eq:2orderk*}, thus the smallest root is smaller.
\item $Cost(k^*) \leq Cost(k^t) \leq Cost(0)$ is immediate from the expression of the cost function $Cost(k)$ which is decreasing at $k=0$, convex for $k \in [0, \overline{D}]$ and minimized at $k^{**}$ with $k^t \leq k^* \leq k^{**}$ (cf. Proposition \ref{prop:efficiency2}).
\item $SP(k^t) \leq SP(0)$ is immediate from the expression of the side payments in \eqref{eq:VBequilSPRT}. 
\item $SP(k^*) \leq SP(k^t) \leq SP(0)$ can be seen by computing the derivative of the side payments: $\frac{\partial \mathbb{E}_{\epsilon} (SP_{RT,\epsilon})}{\partial k} = \frac{s_F}{2} \left( \frac{k}{\overline{D}} - 1 \right) < 0$ for $k\in [0, \overline{D}]$. Thus the side payments are a decreasing function of $k$ for $k\in [0, \overline{D}]$. The conclusion follows from $0 \leq k^t \leq k^* < \overline{D}$.
\end{itemize}
\end{proof}

\section{Some more numerical examples} \label{sec:appendixMoreExamples}
We construct a portfolio of power plants, introducing some heterogeneity between the SSR and the FSR. We build on the portfolio from the example in section \ref{sec:model}, perturbing randomly both the costs, the minimum and the maximum production limits of both SSR and FSR. The resulting merit order curve is shown on Figure \ref{fig:example2_MO}. This resembles the merit order curve in Figure \ref{fig:example_MO}, but with more heterogeneity between the power plants.
One can see from Figure \ref{fig:example2_results} that all the previous observations from section \ref{sec:model} (both the model and the simulations on the numerical example) still apply to this example.

Notice that, because of the non-convexity in the game, a pure strategy equilibrium is not guaranteed to exist. Instead, the game may admit a mixed-strategy equilibrium, in which virtual traders randomize, for example, between the two strategies that yield zero expected profit.

\begin{figure}[t!]
    \centering
    \begin{subfigure}[b]{0.5\textwidth}
        \centering
        \includegraphics[width=\textwidth]{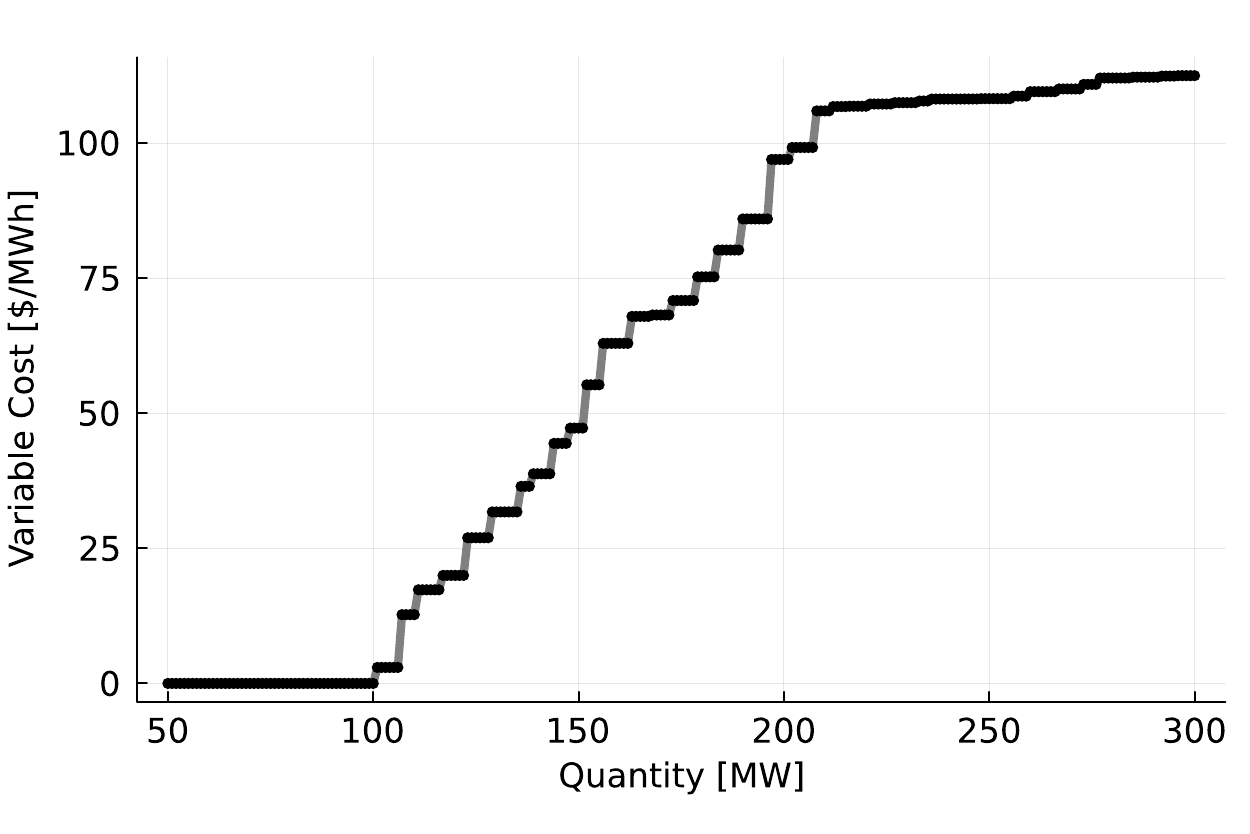}
        \caption{Merit order curve}
        \label{fig:example2_MO}
    \end{subfigure}%
    ~ 
    \begin{subfigure}[b]{0.5\textwidth}
        \centering
        \includegraphics[width=\textwidth]{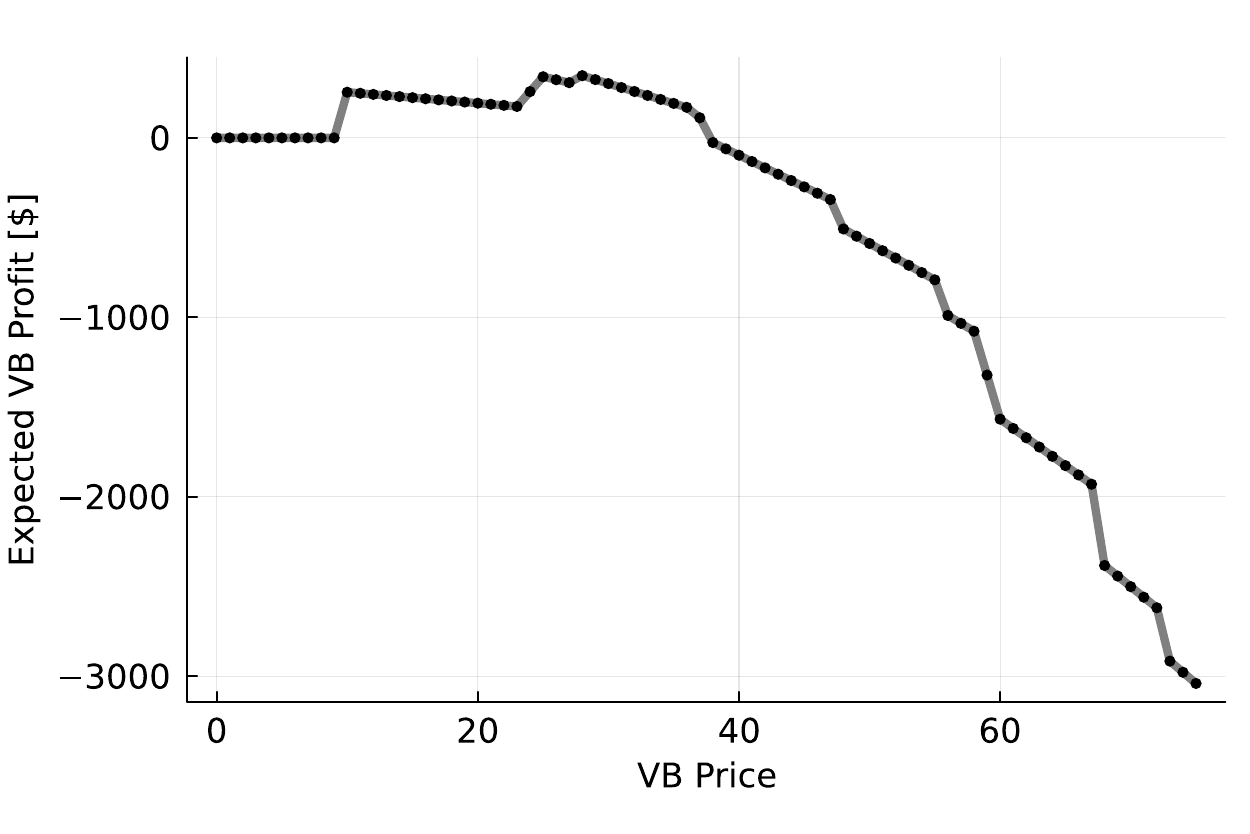}
        \caption{Virtual bidders profit}
        \label{fig:example2_VBprofit}
    \end{subfigure}
    \begin{subfigure}[b]{0.5\textwidth}
        \centering
        \includegraphics[width=\textwidth]{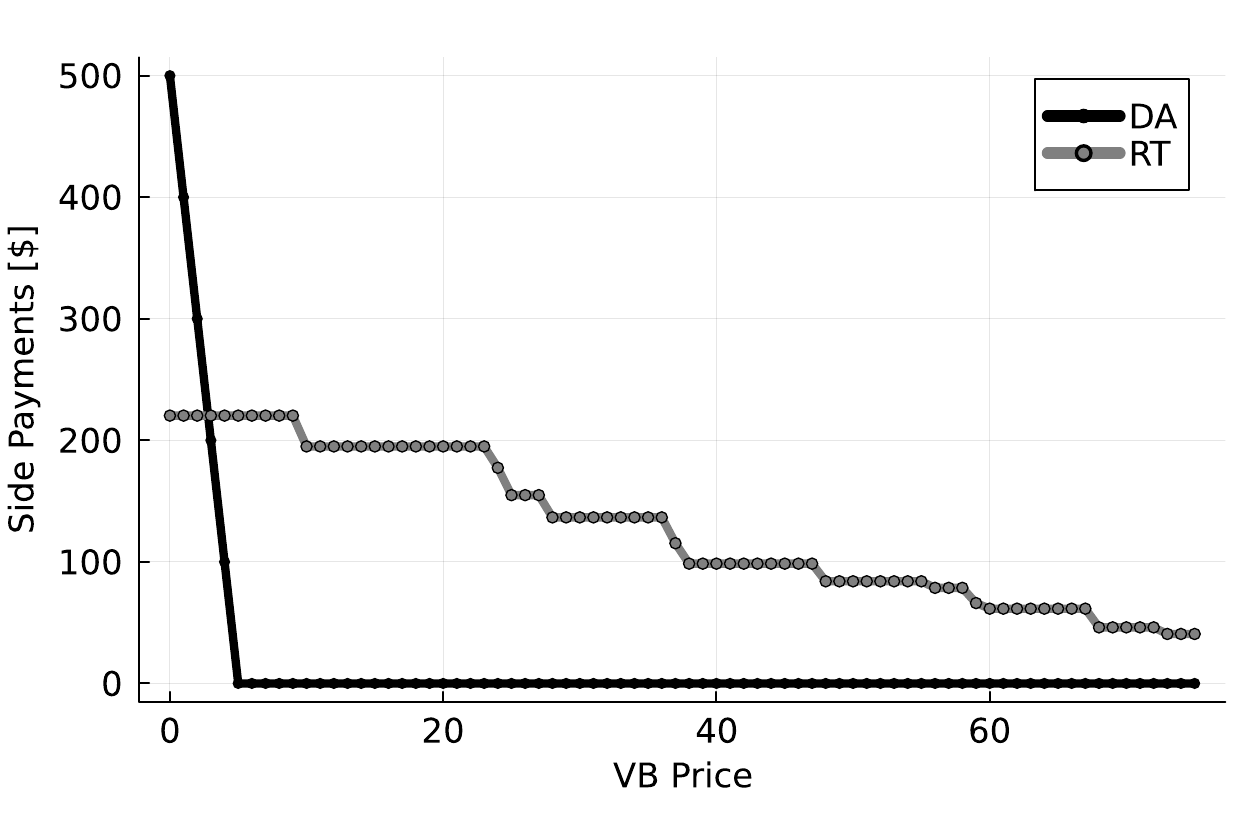}
        \caption{Side Payments}
        \label{fig:example2_SP}
    \end{subfigure}%
    ~ 
    \begin{subfigure}[b]{0.5\textwidth}
        \centering
        \includegraphics[width=\textwidth]{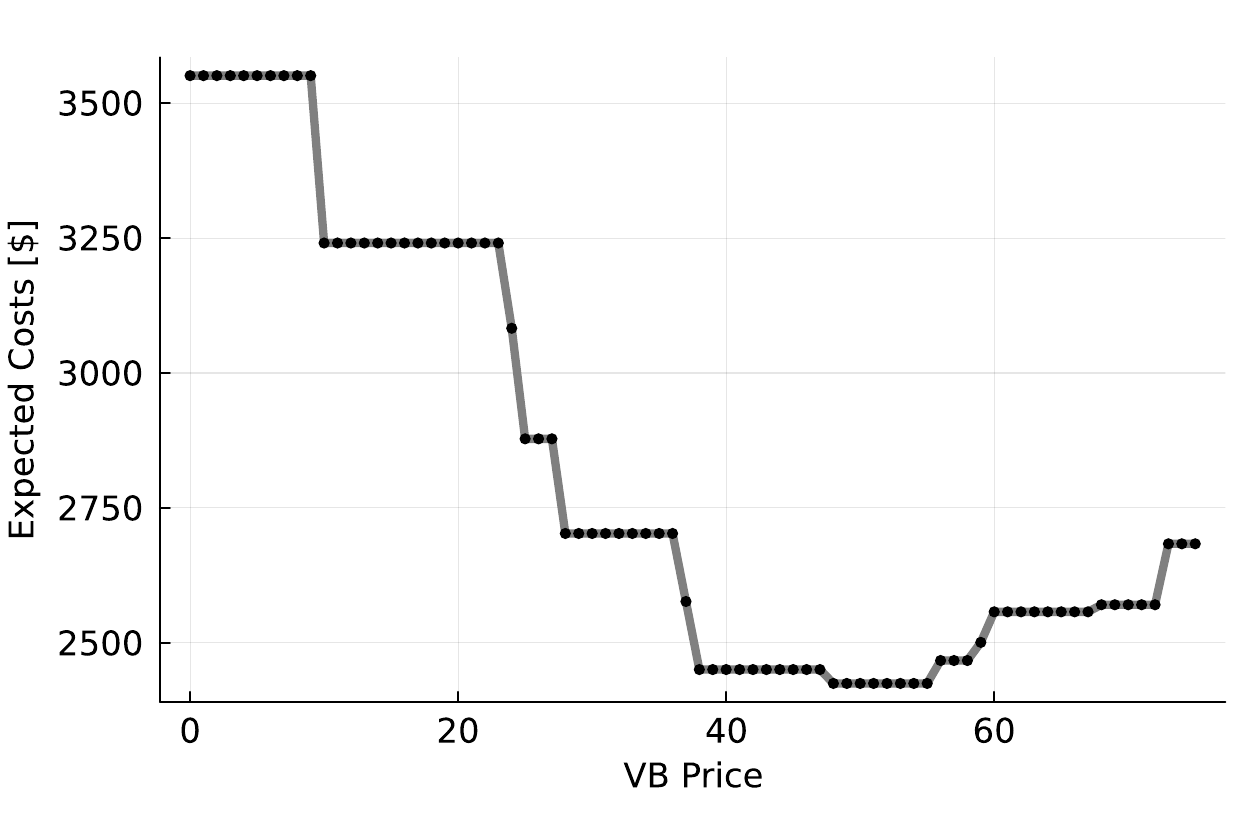}
        \caption{Total expected costs}
        \label{fig:example2_Costs}
    \end{subfigure}
    \caption{Results of the perturbed discrete example.}
    \label{fig:example2_results}
\end{figure}

\section{PJM data} \label{sec:appendixPJMdata}
This appendix provides more in-depth information about PJM data. We comment about each data one by one.

\textsc{Missing data}. As mentioned in section \ref{sec:empiricalStrat}, our database includes 1276 dates, excluding 12 dates with missing data. The dates with missing data are: 2018-12-25, 2018-12-29, 2018-12-31, 2019-10-09, 2019-12-31, 2020-07-25, 2020-12-29, 2020-12-31, 2021-01-07, 2021-01-24, 2021-07-25, 2021-05-27. 
The missing data are coming from side payments data, self-schedule data and DA load data. Side payments data are missing for 2018-12-25 and 2018-12-29. Self-schedule data are missing for 2018-12-31, 2019-10-09, 2019-12-31, 2020-07-25, 2020-12-31, 2021-01-07, 2021-05-27, 2021-11-01 and 2021-12-13. DA load data are missing for 2020-12-29, 2021-01-24 and 2021-07-25.

\textsc{Side payments data}. Figure \ref{fig:PJMupliftsDistribution} shows the distribution of the side payments (related to non-convexities).
Figure \ref{fig:PJMupliftsAllCat} shows all the types of uplift payments that exist in PJM, divided per category.
Other uplifts payments can be divided into energy market uplifts, ancillary services uplifts and reliability services uplifts.
Energy market uplift---besides Balancing Operating Reserve---include ``Balancing Operating Reserve Lost Opportunity Cost'' which are uplift due to out-of-merit re-dispatch for transmission constraint control. Ancillary services uplifts include ``Regulation and Frequency Response Service'', ``Synchronized Reserve'' and ``Non-Synchronized Reserve''. Reliability services uplifts include ``Black Start Service'' and ``Reactive Supply and Voltage Control Service''.

\begin{figure}
\centering
\includegraphics[width=0.8\textwidth]{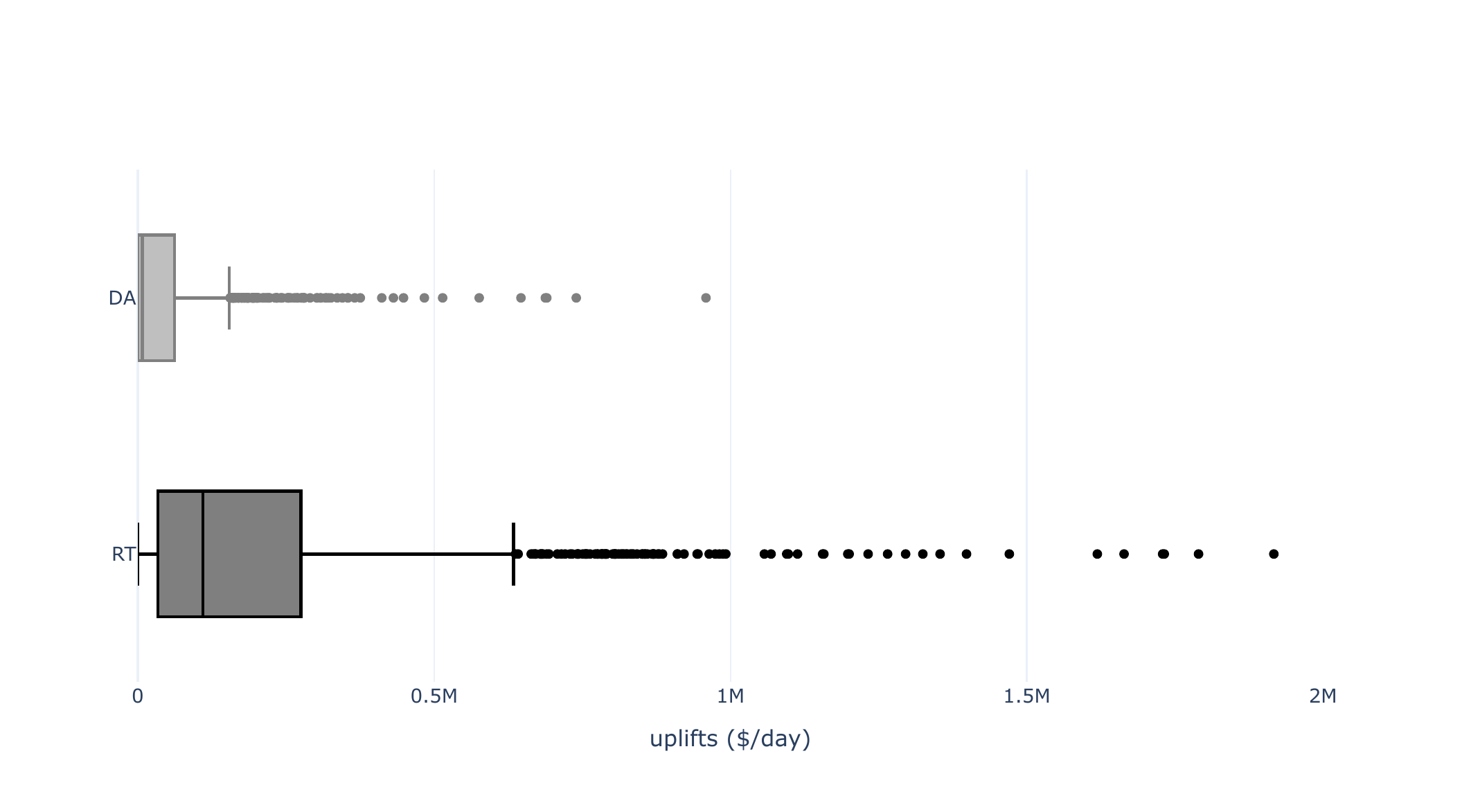}
\caption{PJM daily side payments distribution.}
\label{fig:PJMupliftsDistribution}
\end{figure}

\begin{figure}
\centering
\includegraphics[width=0.8\textwidth]{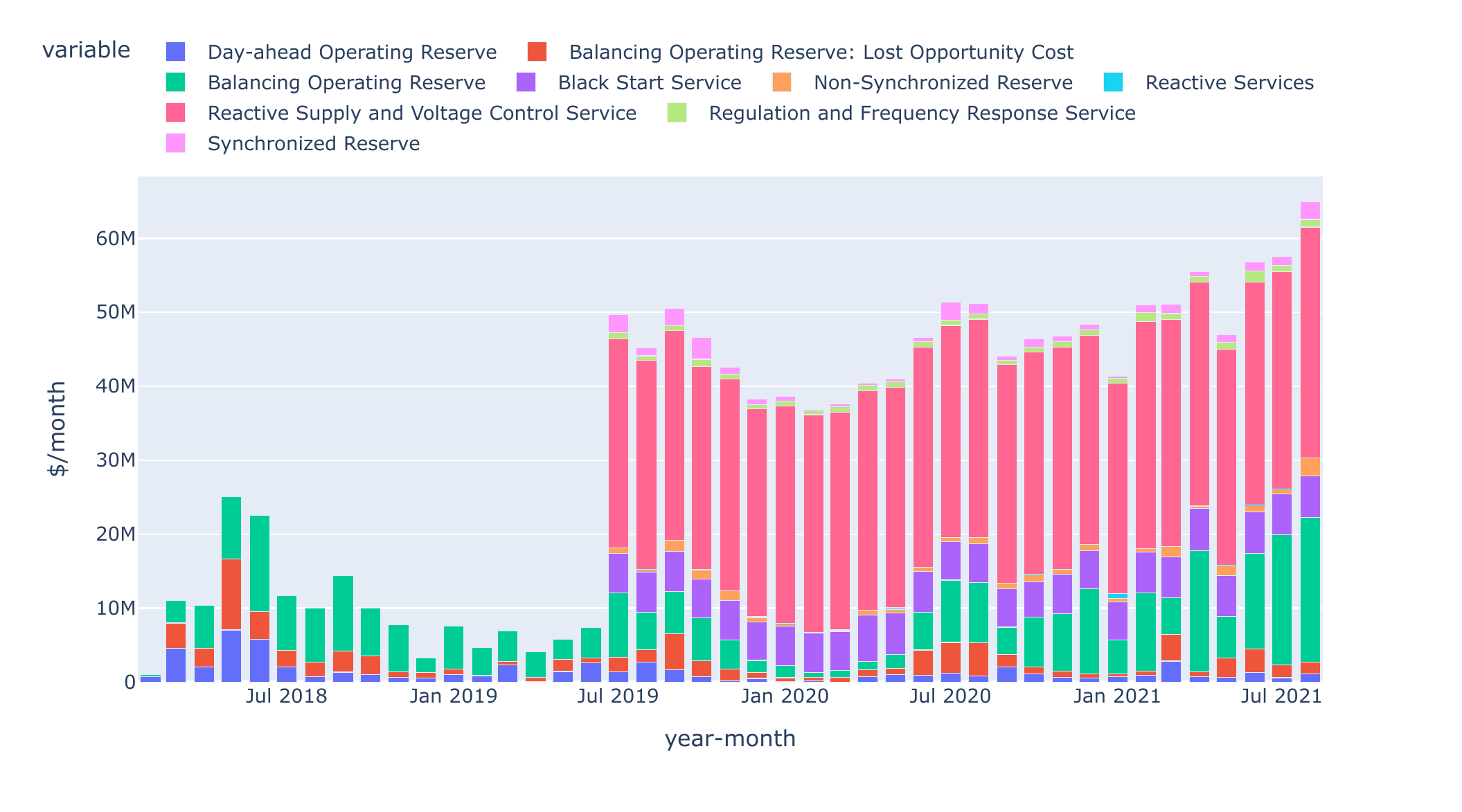}
\caption{Overview of all categories of PJM ``uplifts''.}
\label{fig:PJMupliftsAllCat}
\end{figure}

\textsc{Generation bids}. Figure \ref{fig:FSRdata} provides the distribution of the capacity and fixed costs of fast-start resources. As noted in section \ref{sec:empiricalStrat}, fast start resources are mostly small power plants, with a capacity between 0 and 100MW and a median at 43MW.
Their start-up costs are mostly between 0 and 10,000\$. 

\begin{figure}[t]
    \centering
    \begin{subfigure}[b]{0.5\textwidth}
        \centering
        \includegraphics[width=\textwidth]{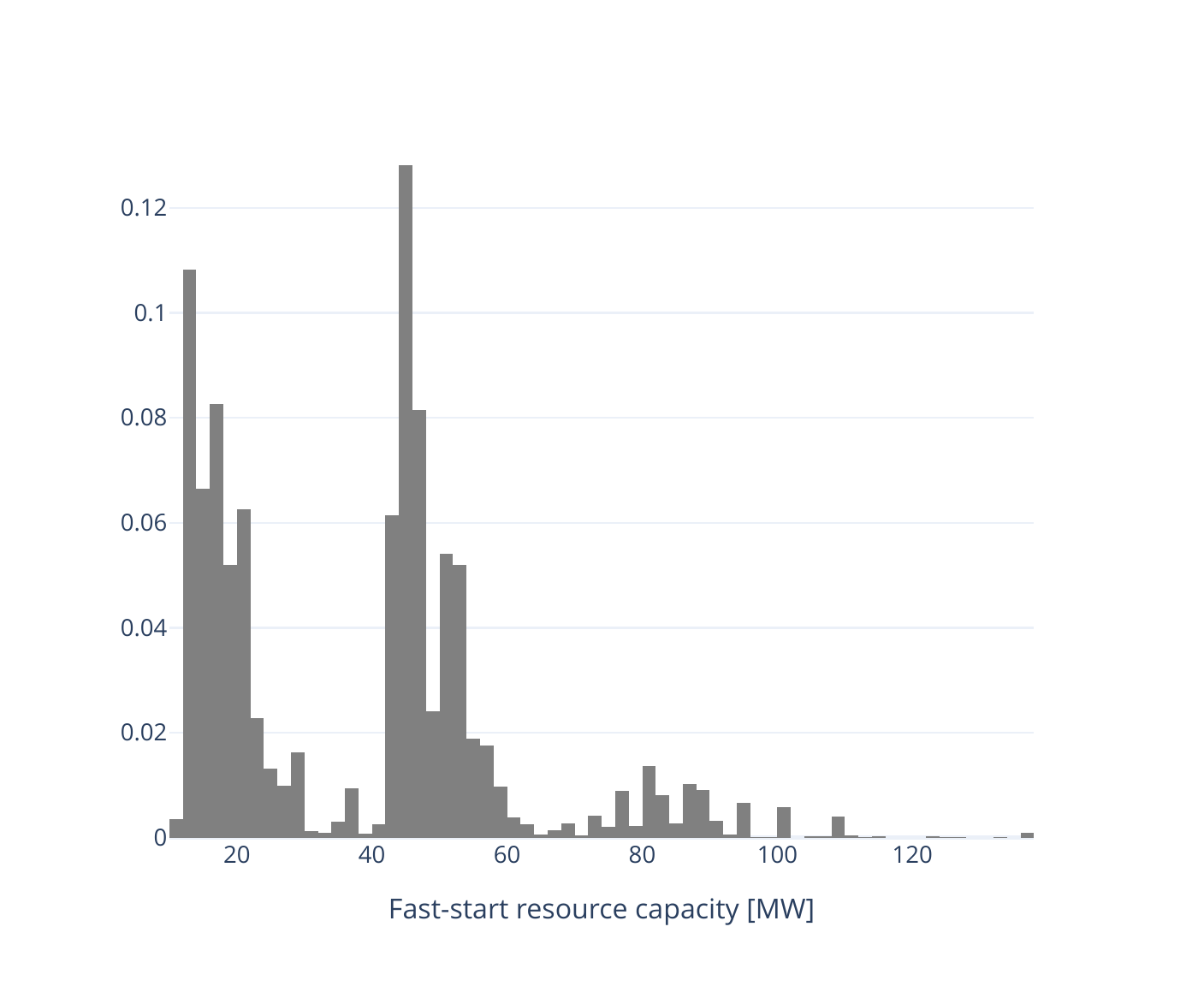}
        \caption{FSR capacity distribution}
        \label{fig:FSRdataCapaDist}
    \end{subfigure}%
    ~ 
    \begin{subfigure}[b]{0.5\textwidth}
        \centering
        \includegraphics[width=\textwidth]{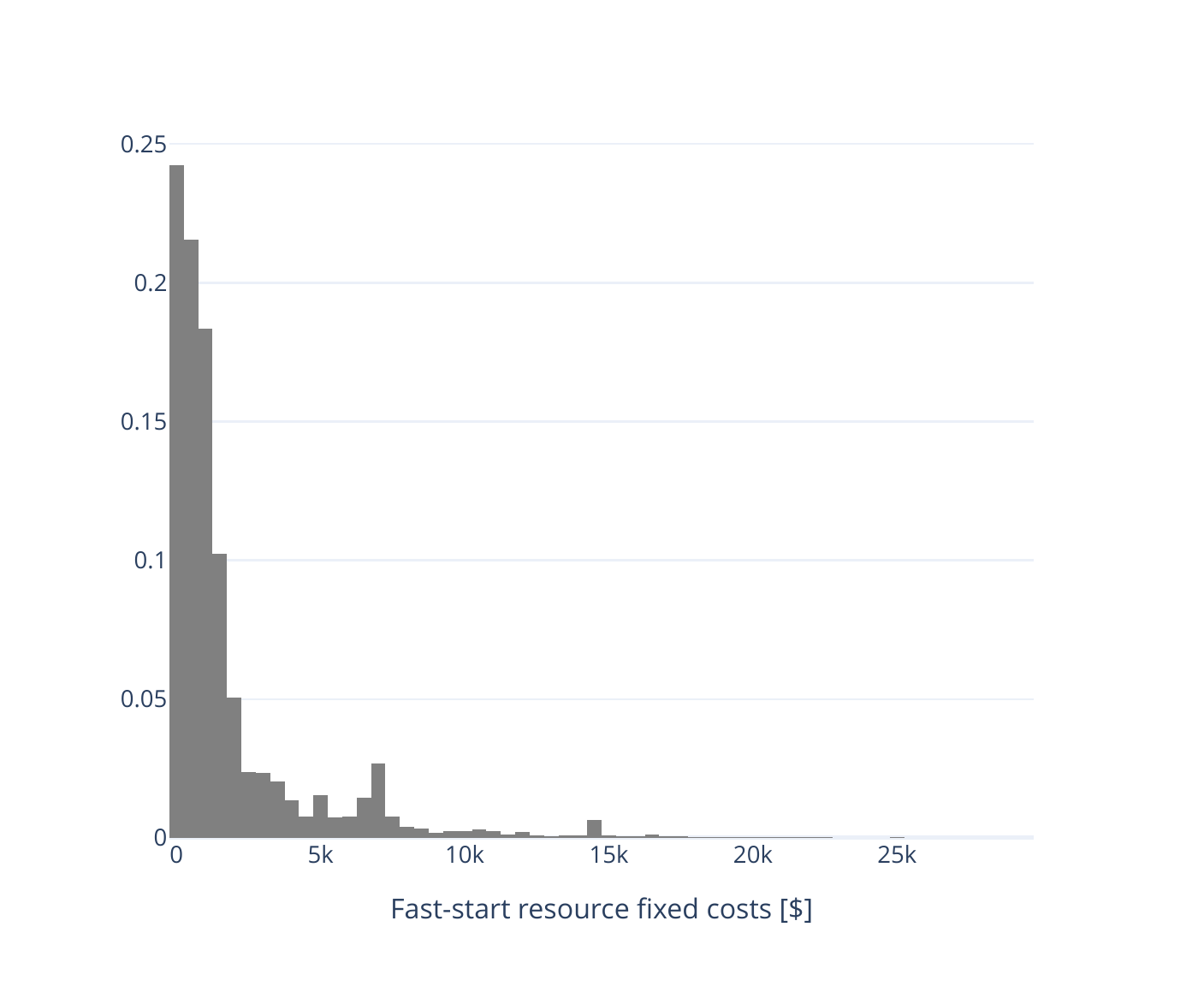}
        \caption{FSR fixed costs distribution}
        \label{fig:FSRdataCostDist}
    \end{subfigure}
    \caption{PJM fast-start resource capacity and fixed costs distributions.}
    \label{fig:FSRdata}
\end{figure}

\textsc{Load}. We aggregate PJM hourly load into daily load values by summation. We also compute the standard deviation of the hourly load within each day as a measure of load variations: a high standard deviation indicates a sharp difference between peak and offpeak loads within a day, often associated with steep load increase. 
Both measures are computed for day-ahead and real-time loads.
Figure \ref{fig:PJM_load_val_std_error} shows the resulting time series for the real-time loads.

\textsc{Grid congestions}. Direct data on grid congestions, such as the shadow prices associated to each line or the congestion rent, are not available on PJM. Yet, the severity of congestions can be inferred from prices data. We test two metrics of congestion, that are included in the controls. One is based on a load-weighted average of the congestion component of PJM total price. The other is based on the standard deviation of the locational prices at each hour. Days with high standard deviation of locational prices are indeed likely to be days where grid constraints are binding.

\textsc{Market prices}. PJM provides times series of hourly locational price of electricity in day-ahead and real-time. We compute aggregate daily prices (cf. Figure \ref{fig:PJM_prices_DARTgas}). We use the system price (price at the reference hub) as a control in our regression.

\begin{figure}
\centering
\includegraphics[width=0.8\textwidth]{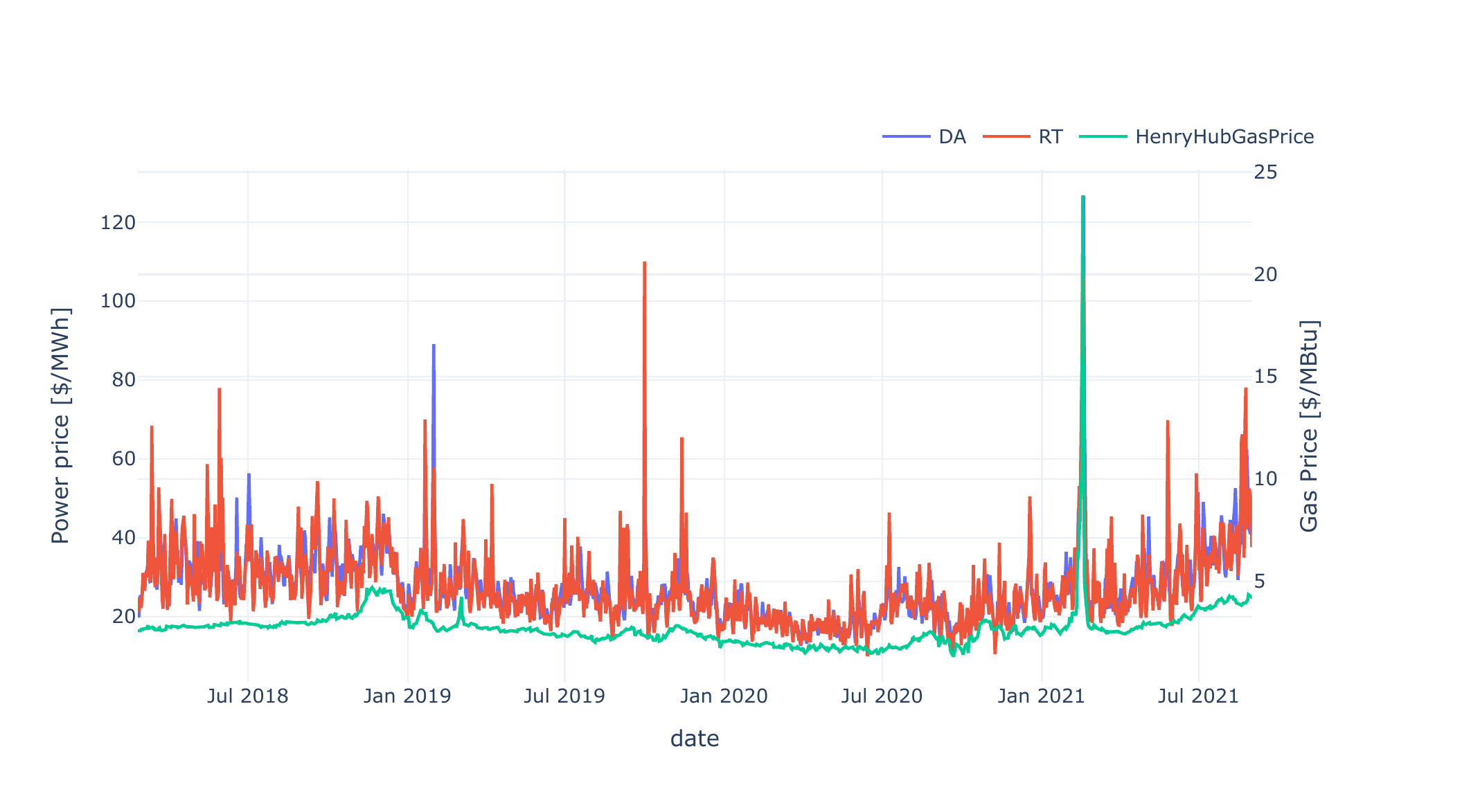}
\caption{Gas price and PJM prices.}
\label{fig:PJM_prices_DARTgas}
\end{figure}

\textsc{Generation availability}. PJM bids data also enable to compute the generation capacity available each day (i.e. some plants might be unavailable due to planned outages or for other reasons). The resulting time series of both number of plants available and MW capacity available is provided in Figure \ref{fig:PJM_genCapaAndActiveUnits}. There is a clear seasonality in plants availability, with most non-availabilities occurring during the fall and the spring, thus outside the winter and summer peak loads.

\begin{figure}
\centering
\includegraphics[width=0.8\textwidth]{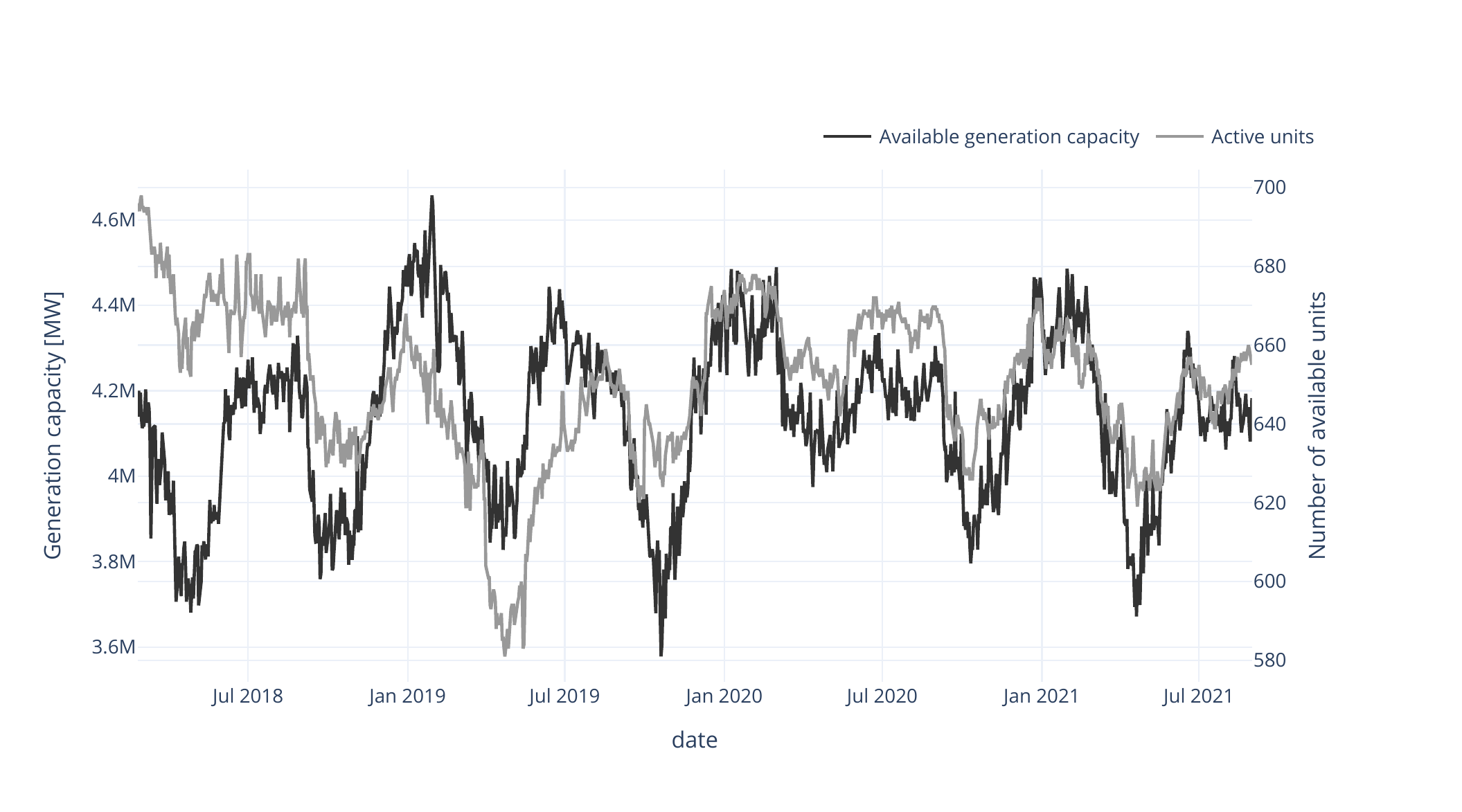}
\caption{PJM daily available generation capacity and number of active units.}
\label{fig:PJM_genCapaAndActiveUnits}
\end{figure}

\begin{figure}
\centering
\includegraphics[width=0.8\textwidth]{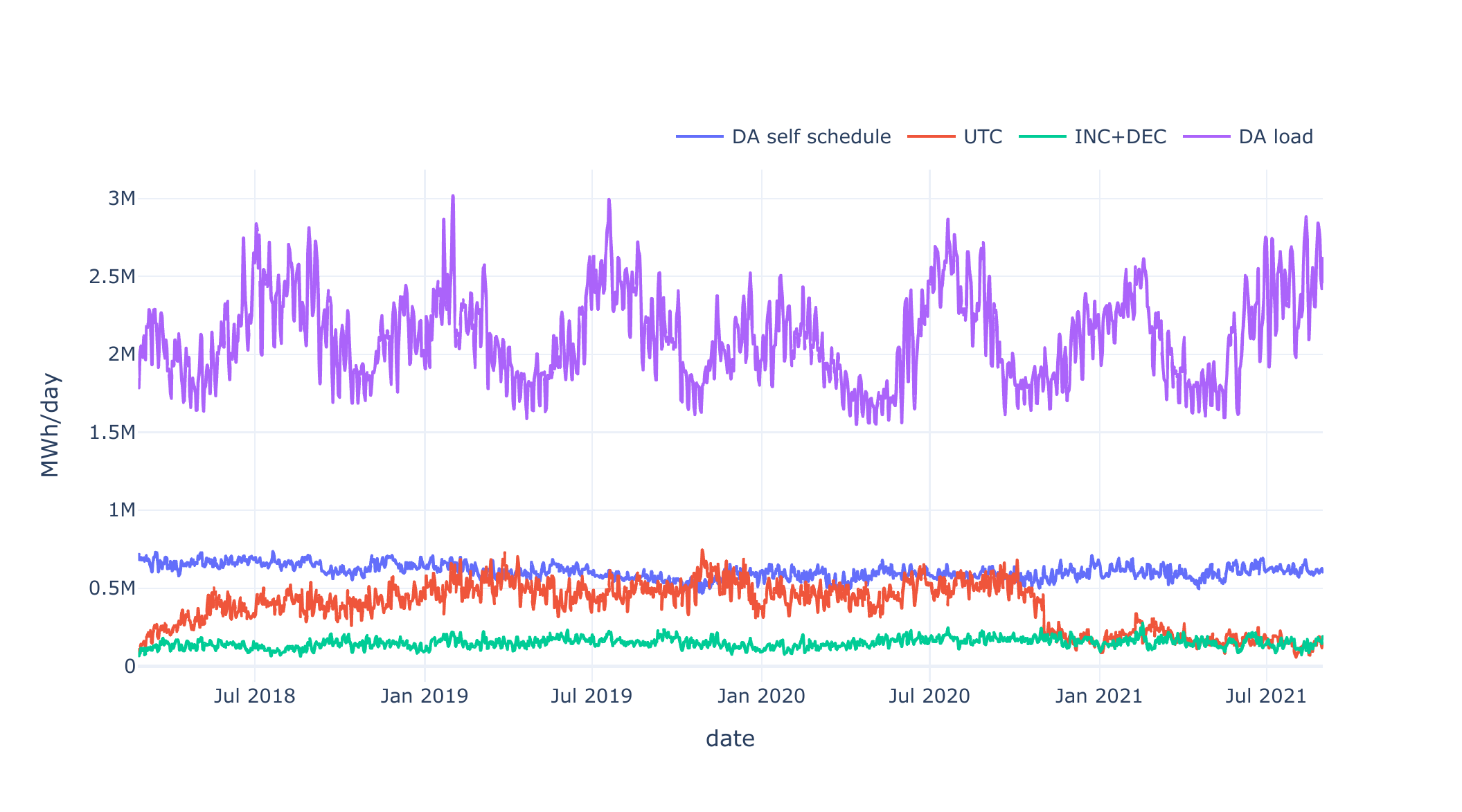}
\caption{PJM bid volume by category.}
\label{fig:PJMbids_categoriesAbs}
\end{figure}

\textsc{Self-scheduling}. Self-scheduling are the physical generation bids at zero price (as opposed to the dispatchable generation bids). PJM provides data of hourly real-time self-scheduling. We compute the volume of day-ahead self-scheduling using the bid data, as the supply offers at zero price (cf. Figure \ref{fig:PJMbids_meritOrder}). Figure \ref{fig:PJMbids_categoriesAbs} shows the resulting output, which fits well with the average figures of day-ahead self-scheduling reported by PJM market monitoring.

\textsc{Emergency procedure events}. Emergency procedure are issued by PJM to ensure reliability of the system. 
These events relate to controlled load curtailment and load shedding, generation curtailment, geomagnetic disturbance, voltage support, hot and cold weather alerts, etc.
We compute a time series of number of daily events.
We count only the events labelled as Action, Warning and Alert. If an event spans over multiple days, it is counted as an event on each of these days.
Figures \ref{fig:PJM_emergency_events} shows the resulting number of emergency events per month in PJM. 
Figure \ref{fig:PJM_emergency_procedures_message_types} shows the distribution of these events per category.

\begin{figure}
\centering
\includegraphics[width=0.8\textwidth]{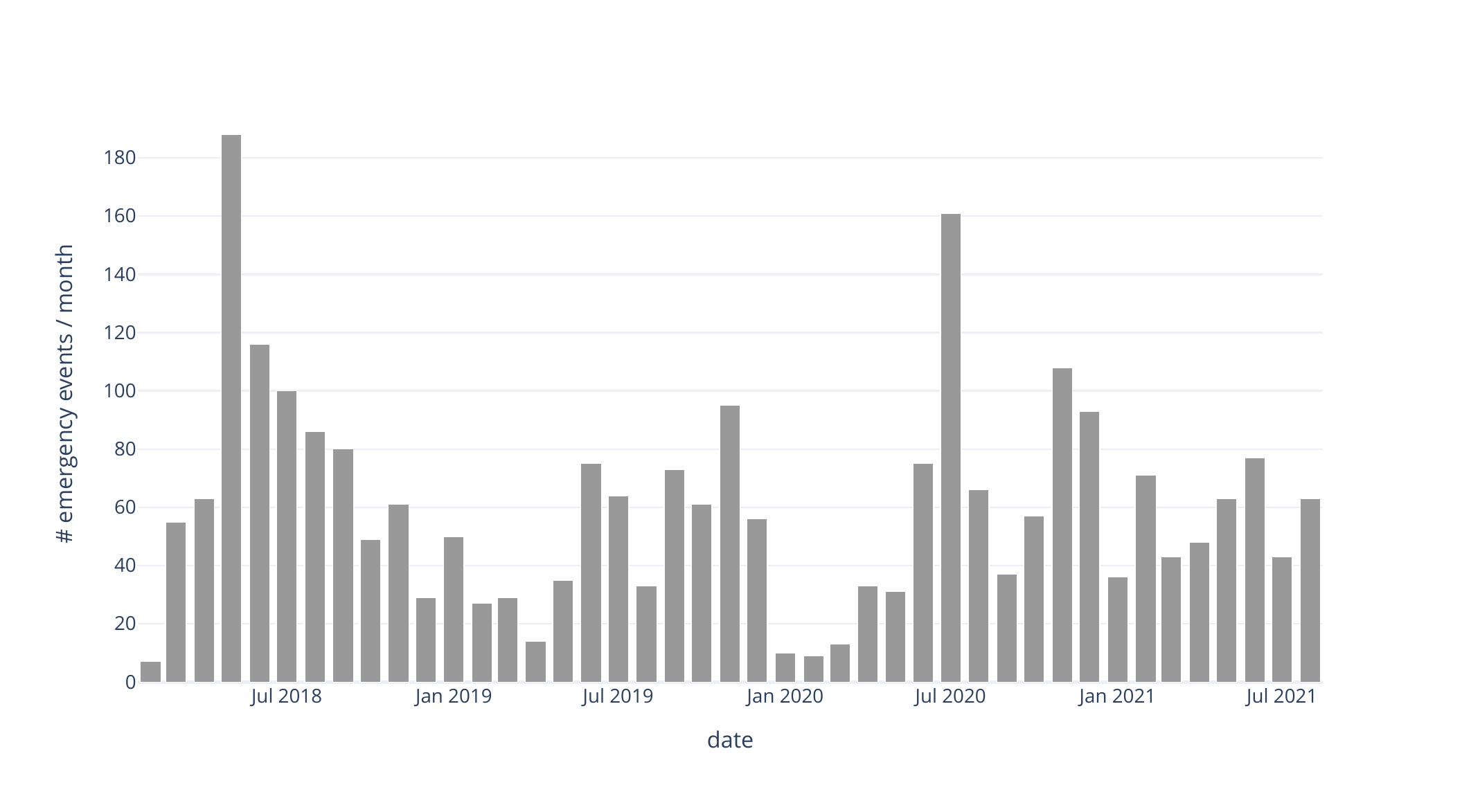}
\caption{Number of emergency events per month in PJM.}
\label{fig:PJM_emergency_events}
\end{figure}

\begin{figure}
\centering
\includegraphics[width=0.8\textwidth]{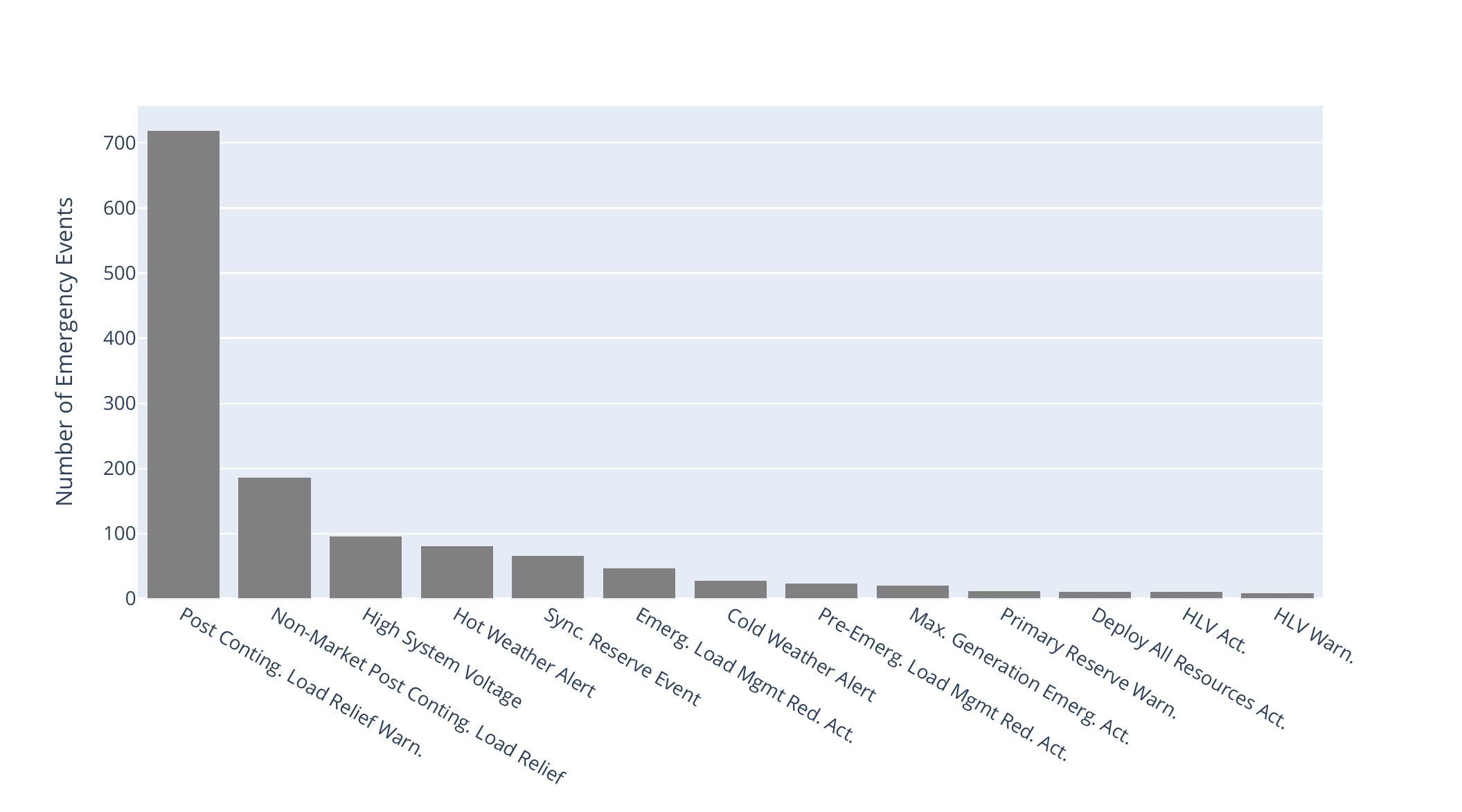}
\caption{Distribution of the emergency events per category (only the most significant categories are shown).}
\label{fig:PJM_emergency_procedures_message_types}
\end{figure}

\end{document}